\theoremstyle{theorem}
\newtheorem{theorem}{Theorem}[section]
\newtheorem*{mq}{Problem}
\newtheorem{corollary}[theorem]{Corollary}
\newtheorem{lemma}[theorem]{Lemma}
\newtheorem{remark}[theorem]{Remark}
\theoremstyle{definition}
\newtheorem{definition}[theorem]{Definition}
\newtheorem*{convention}{Convention (C)}
\numberwithin{equation}{section}
\DeclareMathOperator{\interior}{int}
\DeclareMathOperator{\diam}{diam}%
\newcommand{\edges}[1]{{#1}^e}
\DeclareMathOperator*{\St}{St}
\DeclareMathOperator*{\val}{val}
\DeclareMathOperator{\Nacc}{\mathcal{I}}
\DeclareMathOperator{\NaccSides}{\mathcal{IS}}%
\DeclareMathOperator{\disc}{D^s}%
\DeclareMathOperator{\nadler}{\Lambda}
\DeclareMathOperator{\euler}{\chi} 
\DeclareMathOperator{\ball}{B}%
\newcommand{\defmapsto}{\rightarrow}
\newcommand{\pure}[1]{\mathcal{P}_{#1}}
\newcommand{\mix}{\mathcal{M}}
\newcommand{\exact}{\mathcal{E}}
\newcommand{\Lom}{\mathcal{L}}
\newcommand{\cl}[1]{\overline{#1}}
\newcommand{\set}[1]{\left\{#1\right\}}
\newcommand{\eps}{\varepsilon}
\newcommand{\ra}{\to}
\def\@strippedMR{}
\def\@scanforMR#1#2#3\endscan{%
  \ifx#1M\ifx#2R\def\@strippedMR{#3}%
  \else\def\@strippedMR{#1#2#3}%
  \fi\fi}
\renewcommand\MR[1]{\relax
  \ifhmode\unskip\spacefactor3000 \space\fi
  \@scanforMR#1\endscan
  MR\MRhref{\@strippedMR}{\@strippedMR}}
\begin{document}
\title[Structure and entropy of mixing maps]{Topological structure and entropy of mixing graph maps}


\author[Hara\'{n}czyk]{Grzegorz Hara\'{n}czyk}
\address[G.~Hara\'{n}czyk]{Institute of Mathematics, Jagiellonian University,
{\L}ojasiewicza~6, 30-348 Krak\'ow, Poland} \email{gharanczyk@gmail.com}

\author[Kwietniak]{Dominik Kwietniak}
\address[D.~Kwietniak]{Institute of Mathematics, Jagiellonian University,
{\L}ojasiewicza~6, 30-348 Krak\'ow, Poland} \email{dominik.kwietniak@uj.edu.pl}

\author[Oprocha]{Piotr Oprocha}

\address[P.~Oprocha]{ Faculty of Applied
Mathematics, AGH University of Science and Technology, al.
Mickiewicza 30, 30-059 Krak\'ow, Poland -- and -- Institute of Mathematics, Polish Academy of Sciences, ul. \'Sniadeckich 8, 00-956 Warszawa, Poland} \email{oprocha@agh.edu.pl }

\subjclass[2000]{Primary: 37B40, 37B20 Secondary: 37E05, 37E10}

\date{\today}

\dedicatory{}

\begin{abstract}
Let $\pure{G}$ be the family of all topologically mixing,
but not exact self-maps of a topological graph $G$. It is
proved that the infimum of topological entropies of maps
from $\pure{G}$ is bounded from below by $(\log 3/ \nadler(G))$,
where $\nadler(G)$ is a constant depending on the
combinatorial structure of $G$.
The exact value of the
infimum on $\pure{G}$ is calculated for some families of graphs.
The main tool is a refined version of the structure theorem
for mixing graph maps. It also yields new proofs of some
known results, including Blokh's theorem
(topological mixing implies specification property for maps
on graphs).
\end{abstract}

\maketitle

\section{Introduction}
There is no connection between topological transitivity,
and topological entropy for self-maps of general compact
metric spaces. A map with positive entropy need not to be
transitive, and a transitive system may have zero entropy.
However, there are spaces such that every topologically
transitive map on them must have positive topological
entropy. For instance, on the compact  interval $[0,1]$ every
transitive map has entropy at least $\log\sqrt{2}$, and
there is a transitive self-map of $[0,1]$ with entropy
equal to this bound. The similar questions about the
best lower bounds for the topological entropy in various
classes of transitive self-maps of a  fixed space have
been considered by many authors, see
\cite{ABLM,AKLS,ALM,Baldwin,BS,Dirbak,HK,KM,Ye}.
For more references and other results of this type, e.g.,
lists of known best lower bounds for the topological
entropy of transitive maps on various spaces, see
\cite[page 341]{ALM} or \cite{AKLS,BS,HK,KM}.
The present work is motivated by the following problem:

\begin{mq}\label{main-problem}
Let $G$ be a topological graph. Let $\pure{G}$ denote the family
of all pure mixing (that is, topologically mixing, but not
topologically exact) self-maps of $G$. Find the infimum of
topological entropies of maps from $\pure{G}$, which is
hereafter denoted as $\inf h(\pure{G})$.
\end{mq}

The main result we would like to present here states that
for a given graph $G$ we have
\[
\frac{\log 3}{\nadler(G)}\le \inf h(\pure{G}),
\]
where $\nadler(G)$ is a constant depending on the combinatorial
structure of the graph $G$. Moreover, we are able to compute
$\inf h(\pure{G})$ for some graphs and two infinite
families of trees (defined in terms of some special structural properties).

This is a generalization of results from \cite{HK}, where the
pure mixing maps of the interval and the circle were considered.
But it should be stressed that the methods from \cite{HK} can
not be directly adapted to the more general case considered here.
More precisely, they can be (after some modification) used to prove the similar results for
trees, but are not applicable for graphs containing a circle and
at least one branching point. The reasons are twofold: first it
is harder to obtain a covering relation from containment relation
if the graph
contains a circle, second the interior of a connected set is no
longer connected if the graph contains a branching point. The
proofs in \cite{HK} heavily relies on these two facts.

To solve our problem in the new, more general context, we refine
the structure theorem for pure mixing graph maps
(see Theorem~\ref{thm:structure}), and apart of
the estimate for the topological entropy mentioned above, we obtain
(with some additional work) new proofs of two results that are
of general interest: Blokh's theorem, stating that
topological mixing graph maps have the specification property
(see Theorem~\ref{thm:Blokh}),
and \cite[Theorem 4.2]{Yokoi}, the main result of \cite{Yokoi},
which in turn, generalizes \cite[Theorem B and C]{CovenMulvey} to graph maps (see
Theorem \ref{thm:Yokoi-main}).

Finally, we would like to remark that our version of the
structure theorem (Theorem~\ref{thm:structure}) for pure mixing graph maps could probably be
derived along the lines of Blokh's papers (see also the
presentation of Blokh's work in Alseda et al. \cite{Alseda}),
but it is not a simple nor direct corollary of any theorem
presented in \cite{Blokh, Blokh2, Blokh3}. To obtain the structure theorem
from \cite{Blokh, Blokh2, Blokh3} or \cite{Alseda}, one should rather rework the whole proof,
and adjust it in many places. We are convinced that this approach to
the structure theorem, even if succeeded,  would result in less transparent and longer
proof than ours. 

\subsection{Entropy and chaos for graph maps}

Let us briefly recall one of the possible interpretations of
the lower bound for topological entropy in
the class $\pure{G}$. First, note that for any fixed
graph $G$, which is not a tree (contains at least one circle),
the infimum of topological entropies of mixing maps is zero
(see \cite{Alseda}).

Now, our theorem about lower bound for the topological entropy
of pure mixing graph map can be rephrased in a following way.
Let $G$ be any graph containing a circle. If we add
(set theoretically) the family $\exact(G)$ of all topologically
exact maps of $G$ to the family $\pure{G}$ of all pure mixing
maps then we get the family $\mix(G)$ of all topologically
mixing maps. As we observed above in $\mix(G)$ we can find maps
with arbitrary low entropy. By our result, if the entropy of
a mixing graph map of $G$ is sufficiently small, then the map
must be exact, that is, exact maps lower the entropy
in the family of mixing maps.
On the other hand, exact maps are regarded as more chaotic
than pure mixing maps. Therefore, we can once again
re-formulate our main result:
adding \emph{more chaotic} class of maps to the
\emph{less chaotic} one results in \emph{lowering}
topological entropy in the enlarged class.
This contrasts with the common interpretation of the entropy
as a quantitative measure of chaos present in the system.

But the paradox disappears if only we will treat the topological
entropy as a qualitative indicator of chaos, that is, positive
topological entropy is a sign of complex behavior present in
the system. From this point of view the precise numerical value
of the topological entropy is unimportant.

\section{Basic definitions and notation}

\subsection{Notation and terminology}
Let $(X,\rho)$ be a metric space, and let $f\colon
X\defmapsto X$ be a continuous map. In this paper letters $k,l,m,n, M, N$ will always
denote integers, and by ``a map'' we will always
mean ``a continuous map''. If $A\subset X$ then
we will denote the closure (interior) of $A$ by
$\cl{A}$ ($\interior{A}$, respectively). An open ball
with the center at $x\in X$ and radius $\eps>0$ is
denoted $\ball(x,\eps)$. Similarly, if $A$ is a subset
of $X$ then $\ball(A,\eps)=\bigcup_{x\in A}\ball(x,\eps)$.

A \emph{continuum} is a compact, connected metric space.
An \emph{arc} is a continuum homeomorphic to the interval
$[0,1]$. If $A$ is an arc and $g\colon [0,1]\defmapsto A$
is a homeomorphism, then the \emph{endpoints} of $A$
are $g(0)$ and $g(1)$. Clearly, the endpoints do not
depend on the choice of $g$.

If $X$ is a continuum and $A\subset X$ is an arc with endpoints
$x$ and $y$, then we say that $A$ is a \emph{free arc} provided $A\setminus\{x,y\}$ is open in $X$.

\subsection{Topological graphs}
A \emph{topological graph} (a \emph{graph} for short)
is a continuum $G$ such that there is an one dimensional
simplical complex $\mathcal{K}$ whose geometric carrier
$|\mathcal{K}|$ is homeomorphic to $G$. Each such complex
is called a \emph{triangulation} of $G$.
We say that a triangulation $\mathcal{L}$ of a graph $G$ is a
\emph{subdivision} of a triangulation $\mathcal{K}$ if every vertex
$\mathcal{L}$ is a vertex for $\mathcal{K}$.
We identify each graph $G$ with a subspace of the Euclidean space
$\mathbb{R}^3$. Moreover, we assume that $G$ is endowed with the
taxicab metric, that is, the distance between any two points of $G$
is equal to the length of the shortest arc in $G$ joining these
points. If $G$ is a graph, and $\mathcal{K}$ is a triangulation of
$G$, then every zero (one) dimensional simplex of $\mathcal{K}$
is called a \emph{vertex} (an \emph{edge})
of $G$ with respect to $\mathcal{K}$.
The set of all edges with respect to $\mathcal{K}$ is denoted by
$\edges{\mathcal{K}}$.

The \emph{star} of a vertex $v$, denoted
by $\St(v)$, is the union
of all the edges that contain the vertex $v$.
For every $x\in G$ we define the \emph{valence} of $x$, denoted
$\val(x)$, in the following way: if $x$ is an vertex of $G$ then
$\val(x)$ is equal to the number of connected components
of $\St(v)\setminus\{v\}$, and $\val(x)=2$ otherwise.
Points $x\in G$ with $\val(x)=1$ are called
\emph{endpoints} of the graph $G$, and if $\val(x)>2$ we say that
$x$ is a \emph{branching point}.
Let $End(G)$ denote the set of all endpoints of $G$.


Note that $\val(x)$ is independent of the choice of
triangulation. In particular, every branching point (endpoint) for some
triangulation is a branching point (endpoint) for every
triangulation.

Any subset of $G$ which is a graph itself is called a
\emph{subgraph} of $G$. The family of all subgraphs of $G$ is
denoted by $\mathcal{G}(G)$, and coincides with the family of
all nondegenerate subcontinua of $G$.
Note that a singleton set is \emph{not} a subgraph of $G$
(graphs are one-dimensional by the definition).

Following Nadler \cite{Nadler}, we define \emph{disconnecting
number} of a graph $G$ as the least $n>0$ such that every
subset $D$ of $G$ of cardinality $n$ \emph{disconnects} $G$
(i.e., $G\setminus D$ is not connected). The disconnecting number
is well defined and is denoted $\disc(G)$ (see \cite{Nadler}).
Let $\nadler(G)$ be the maximal disconnecting number among all
subgraphs of $G$. It follows from \cite{Nadler} that
$\nadler(G)=\disc(G)-\euler(G)$+1, where $\euler(G)$ is the
Euler characteristic of $G$.

\subsection{Topological dynamics}

We refer the reader to \cite{ALM} for definitions of basic
concepts of the theory of dynamical systems, such as
\emph{(periodic) orbit}, \emph{(semi-) conjugacy} etc. We call a
map $f$ \emph{transitive} if for every pair of nonempty open
subsets $U$ and $V$ of $X$ there is an $n$ such that $f^n(U)\cap
V\neq\emptyset$; we say that $f$ is \emph{totally transitive} if
all its iterates $f^n$ are transitive; 
a map $f$ is \emph{mixing} if for any nonempty sets $U$ and $V$ open in
$X$, there is an $N>0$ such that $f^n(U)\cap V\neq\emptyset$ for
$n \ge N$; a map $f$ is \emph{exact} if for any nonempty open
subset $U$ of $X$ there is an $n\ge 0$ such that $f^n(U)=X$. It is well
known that exactness implies mixing, and mixing
implies total transitivity, but not conversely in general.
In the special case of non-invertible graph maps total transitivity
implies mixing (see \cite{HKO} for a simple proof of that fact).
The only examples of transitive graph homeomorphisms
are the irrational rotations of the circle, which are even totally transitive,
but not mixing.
Moreover, the transitive graph maps are either totally transitive, or
can be decomposed into totally transitive
ones.  The precise statement is presented below and its proof can be found
in \cite[Theorem~2.2]{AlsedaSplit} (with the only difference that only
transitivity instead of total transitivity is claimed in \eqref{con:split:tt},
however the stronger conclusion follows easily from the proof presented there).
Alternatively, it follows from Banks periodic decomposition theorem \cite{BanksPD}.

\begin{theorem}\label{thm:trans-not-tot-trans}
Let $f\colon G \defmapsto G$ be a transitive graph map. Then exactly one
of the following two statements holds:
\begin{enumerate}
\item $f$ is totally transitive,
\item There exist a $k>1$ and
non-degenerate connected subgraphs $G_0,\ldots, G_{k-1}$ of $G$
such that
\begin{enumerate}
\item $G=\bigcup_{i=0}^{k-1}G_i$,
\item $G_i\cap G_j=End(G_i)\cap
End (G_j)$ for $i \neq j$,
\item $f(G_i)=G_{(i+1 \mod k)}$ for
$i=0,\ldots, k-1$,
\item\label{con:split:tt} $f^k|_{G_i}$ is totally transitive for
$i=0,\ldots, k-1$.
\end{enumerate}
\end{enumerate}
\end{theorem}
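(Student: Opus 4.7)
The plan is to invoke Banks's periodic decomposition theorem \cite{BanksPD} and then refine the resulting regular closed partition to a genuinely graph-theoretic one. If $f$ is totally transitive there is nothing to prove, so assume $f^n$ fails to be transitive for some $n>1$. Banks's theorem then produces a \emph{maximal} integer $k>1$ together with pairwise disjoint nonempty open sets $U_0,\ldots,U_{k-1}$ such that $\bigcup_{i} U_i$ is dense in $G$, $f(U_i)\subseteq U_{(i+1)\bmod k}$, and $f^k|_{U_i}$ is transitive; maximality of $k$ prevents any further periodic refinement. Set $G_i=\overline{U_i}$.

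From density we immediately get $\bigcup_i G_i=G$, and continuity together with transitivity upgrades $f(U_i)\subseteq U_{(i+1)\bmod k}$ to $f(G_i)=G_{(i+1)\bmod k}$. Each $G_i$ is a nondegenerate subcontinuum of $G$, hence a subgraph, and connectedness of $G_i$ follows from the transitivity of $f^k|_{U_i}$: a proper clopen splitting of $G_i$ would restrict to an $f^k$-invariant partition of the dense set $U_i$. Total transitivity of $f^k|_{G_i}$ is forced by the maximality of $k$: any further nontrivial Banks decomposition of $(G_i,f^k|_{G_i})$ could be interleaved with the cyclic action of $f$ to produce a decomposition of $f$ with period strictly greater than $k$, contradicting maximality.

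The delicate point is the identification $G_i\cap G_j = \End(G_i)\cap \End(G_j)$ for $i\neq j$. Since the $U_i$ are open and pairwise disjoint, any $x\in G_i\cap G_j$ lies on the topological boundary of both $U_i$ and $U_j$. I would argue locally in the star $\St(x)$ in $G$: if $\val_{G_i}(x)\ge 2$, then $x$ is interior to an arc lying in $G_i$, so along at least two edges of $G$ incident to $x$ the set $U_i$ accumulates at $x$ from both sides; the same holds for $G_j$, and the pairwise disjointness $U_i\cap U_j=\emptyset$ together with the finite number of edges incident to $x$ then yields a contradiction by a pigeonhole argument on how the $U_\ell$ distribute among the finitely many local branches at $x$. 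This forces $\val_{G_i}(x)=\val_{G_j}(x)=1$.

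The main obstacle, as I see it, is precisely this last step: in a general compact metric space the common boundary of two regular open sets can be topologically pathological, and Banks's theorem gives no control on it. Reducing the analysis to the finitely many edges incident to $x$ is where the one-dimensional graph structure becomes essential, and it is also what makes the stronger endpoint conclusion (not merely ``intersection has empty interior'') available. Once this local combinatorial picture is in hand, the remaining verifications are routine bookkeeping.
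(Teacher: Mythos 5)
The paper does not prove this theorem at all: it is quoted from \cite[Theorem~2.2]{AlsedaSplit}, with Banks's periodic decomposition theorem \cite{BanksPD} offered as an alternative derivation, so your Banks-based route is the one the authors themselves suggest. Nevertheless, your sketch has gaps that are more than bookkeeping. The existence of a \emph{maximal} $k$ is not part of Banks's basic theorem; one must first show that the lengths of regular periodic decompositions of $f$ are bounded (Banks's criterion applies because a graph contains a connected open set with non-dense closure), and it is this boundedness, not bare maximality, that yields total transitivity of $f^k|_{G_i}$. Your connectedness argument is also invalid as stated: a clopen splitting $G_i=A\sqcup B$ is not automatically $f^k$-invariant, and transitivity of $f^k|_{U_i}$ alone cannot force connectedness (transitive maps of Cantor sets abound); connectedness must be extracted from the one-dimensional structure, e.g.\ by exhibiting $G_i$ as the closure of a chain of connected images of a small connected open set.

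The step you flag as delicate is where the argument actually breaks. The pigeonhole fails at a branch point $x$ with $\val(x)\ge 4$: disjointness of the $U_\ell$ does not prevent $U_i$ from accumulating at $x$ along two of the local branches and $U_j$ along the other two, which gives $x\in G_i\cap G_j$ with valence $2$ in each piece and no contradiction. Worse, this configuration is realized by genuine transitive maps: on the $4$-star $X=([-1,1]\times\{0\})\cup(\{0\}\times[-1,1])$ let $T$ be a mixing interval map of $[-1,1]$ fixing $0$ and set $f(x,0)=(0,T(x))$ and $f(0,y)=(T(y),0)$. Then $f$ is continuous and transitive but not totally transitive, the splitting is forced to consist of the two diameters $G_0,G_1$ (since $f^2$ restricted to each is conjugate to the mixing map $T^2$, no other choice of pieces satisfies (a), (c), (d)), and $G_0\cap G_1$ is the central point, which is an endpoint of neither piece. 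Consequently no local combinatorial argument of the kind you propose can deliver clause (b) as literally stated; you should return to the exact formulation and proof in \cite{AlsedaSplit} for what can actually be asserted about $G_i\cap G_j$, and the remaining parts of your sketch need the repairs indicated above.
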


We say that a map $f$ is \emph{pure mixing} if $f$ is
mixing but not exact. We recommend \cite{KSSurvey} as a source of
information on transitivity.

For a definition of the \emph{topological entropy} of $f$ we refer the
reader again to \cite{ALM}. Recall that, if $X$ is a compact space then
the entropy of $f$ is a (possibly infinite) number $h(f)\in [0,+\infty]$.
We will use the basic properties of the entropy such as those in
\cite[Section 4.1]{ALM} without further reference.

Let $\mathfrak{F}(X)$ be a subclass of the class of transitive self-maps
of a given compact metric space $X$. By $\inf h( \mathfrak{F}(X))$ we
mean the best lower bound for the topological entropy of maps
from $\mathfrak{F}(X)$, that is, $\inf h(\mathfrak{F}(X))=\inf \{h(f): f\in
\mathfrak{F}(X)\}$. Moreover, we say that $\inf h(\mathfrak{F}(X))$ is
\emph{attainable} if there exists a map $f\in\mathfrak{F}(X)$ such that
$h(f)=\inf h(\mathfrak{F}(X))$.

\section{Some properties of graph maps}

In this section we collect some properties of graph maps, which
we will use frequently in subsequent sections.
The following convention will recur frequently in what follows.

\begin{convention}\label{convention}
Let $J$ be a free arc (e.g., an edge) in a graph $G$, and let $e$ be one
of its endpoints. We identify $J$ with an interval $[0,a]\subset\mathbb{R}$,
where $0<a$ and $e$ is identified with $0$. We may also assume that this
identification is actually an isometry if necessary, in particular
$a=\diam J$. Then $J$ could be linearly ordered by the relation $\le$
induced from $[0,a]$. It allows us to write
$x<y$ to denote the relative position of points on $J$, and use usual
interval notation to describe connected subsets of $J$.
\end{convention}

The proof of the next lemma is omitted, as it is straightforward.
Alternatively, it can be deduced from \cite[Lemma~23]{KO_CSF} or
\cite[Theorem~3.11]{Kato}.

\begin{lemma}\label{lem:mixing_graph}
For a map $f$ of a graph $G$ the following conditions are equivalent:
\begin{enumerate}
\item $f$ is mixing,\label{con:mixing_simple}
\item\label{con:mixing_graph_unifrom_component}
for every $\eps>0$ and $\delta>0$ there is an integer $N=N(\eps,\delta)$
such that for any subgraph $J$ of $G$ with $\diam J\ge \delta$ each connected
component of the set $G\setminus f^n(J)$ has diameter smaller then
$\eps$ for every $n\ge N$.
\end{enumerate}
\end{lemma}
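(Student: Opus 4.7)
The two directions require different arguments: $(2)\Rightarrow(1)$ is a short direct deduction, whereas $(1)\Rightarrow(2)$ requires a uniformity argument over the infinite family of subgraphs with diameter $\geq\delta$.

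For $(2)\Rightarrow(1)$, I would start by fixing nonempty open $U,V\subset G$, picking a small subarc $J\subset U$ of some positive diameter $\delta$ and a ball $\ball(v,\eps)\subset V$ with $0<\eps<\diam G$. Applying (2) to this $J$ and $\eps$ gives $N$ such that for $n\geq N$ every component of $G\setminus f^n(J)$ has diameter less than $\eps<\diam G$; in particular $f^n(J)\neq\emptyset$. Either $v\in f^n(J)\subset f^n(U)$, or the component $C$ of $G\setminus f^n(J)$ containing $v$ is a proper subset of $G$, in which case connectedness of $G$ forces $\partial C\subset f^n(J)$ to be nonempty, and any $p\in\partial C$ lies within $\diam C<\eps$ of $v$, hence in $V$. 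Either way $f^n(U)\cap V\neq\emptyset$.

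For $(1)\Rightarrow(2)$, I would proceed in three steps after fixing $\eps,\delta>0$. \emph{Step 1 (graph-theoretic reduction).} Fix a triangulation $\mathcal{K}$ of $G$ with $\mesh\mathcal{K}<\eps/2$ and edges $E_1,\dots,E_s$. I would check that whenever a closed set $F\subset G$ meets $\interior E_i$ for every $i$, every component $C$ of $G\setminus F$ has $\diam C<\eps$: such a $C$ cannot contain any whole edge, so if it contained two vertices of $\mathcal{K}$, an arc in $C$ joining them would have to traverse a complete edge, a contradiction; hence $C$ meets at most one vertex, and each of its branches along an incident edge has length below $\mesh\mathcal{K}$. \emph{Step 2 (finite family of test arcs).} Take a subdivision $\mathcal{K}'$ of $\mathcal{K}$ with $\mesh\mathcal{K}'<\delta/4$ and let $A_1,\dots,A_L$ be its edges. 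Any subgraph $J$ with $\diam J\geq\delta$ contains an arc $\alpha$ of length $\geq\delta$ between two diameter-realizing points; since every maximal subarc of $\alpha$ contained in a single edge of $\mathcal{K}'$ has length less than $\delta/4$, $\alpha$ must contain at least one complete edge of $\mathcal{K}'$, so $A_j\subset J$ for some $j$. \emph{Step 3 (uniformity from mixing).} Mixing provides, for each pair $(j,i)$, an integer $N_{j,i}$ with $f^n(\interior A_j)\cap\interior E_i\neq\emptyset$ for $n\geq N_{j,i}$. Setting $N:=\max_{j,i}N_{j,i}$, any $J$ with $\diam J\geq\delta$ and $n\geq N$ satisfies $f^n(J)\supset f^n(\interior A_j)$ and thus $f^n(J)$ meets every $\interior E_i$; Step 1 then yields the desired bound on components of $G\setminus f^n(J)$.

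The main obstacle is the uniformity in $(1)\Rightarrow(2)$: mixing is an a priori statement about individual pairs of open sets, so extracting an $N$ independent of $J$ requires identifying a finite collection of ``universal'' test sets, which is the role of the subdivision $\mathcal{K}'$ in Step 2. The graph-theoretic Step 1 is where one exploits the fact that every edge is a free arc, so that components of the complement cannot stretch arbitrarily across edges --- a structural feature not available in general one-dimensional continua.
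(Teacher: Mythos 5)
Your proof is correct. The paper itself omits the proof of this lemma (it says only that the proof is straightforward and can alternatively be deduced from the cited results of \cite{KO_CSF} or \cite{Kato}), so there is no in-text argument to compare yours against; your three-step scheme --- a triangulation of mesh $<\eps/2$ to control components of the complement of any closed set meeting every edge interior, a subdivision of mesh $<\delta/4$ producing a finite family of test edges so that every subgraph of diameter $\geq\delta$ contains one of them, and then mixing applied to the finitely many pairs of edge interiors to get an $N$ uniform in $J$ --- is exactly the standard way to supply the missing uniformity, and your $(2)\Rightarrow(1)$ direction correctly uses local connectedness to place the boundary of a component of $G\setminus f^n(J)$ inside $f^n(J)$. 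The one spot where you compress the reasoning is the claim in Step 2 that an arc of length $\geq\delta$ must contain a complete edge of $\mathcal{K}'$: the clean justification is that an arc containing no complete edge of $\mathcal{K}'$ can meet at most one vertex of $\mathcal{K}'$ (by the same traversal argument you give in Step 1), hence lies in the star of a single vertex and has length at most $2\mesh\mathcal{K}'<\delta/2$, a contradiction; the same remark tightens Step 1, where a component containing at most one vertex lies in a star and so has diameter less than $2\mesh\mathcal{K}<\eps$. With that spelled out, both implications are complete.
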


\begin{definition}
Given a graph map $f$ and free arcs $I,J\subset G$ we
say that $I$ \emph{covers} $J$ through $f$ (or $f$-covers,
for short) if there exists a free arc $K \subset I$ such
that $f (K) = J$.
\end{definition}

Properties of $f$-covering relation
presented below have elementary proofs. The first
five of them are adapted from \cite[p. 590]{Alseda}
(note that closed intervals there are arcs in our
terminology).

\begin{lemma}\label{lem:covering}
Let $I,J,K,L\subset G$ be free arcs, and let $f, g\colon G\defmapsto G$
be graph maps.
\begin{enumerate}
\item \label{lem:covering:Al1}
If $I$ $f$-covers $I$, then there exists $x\in I$ such that $f(x)=x$.
\item \label{lem:covering:Al2}
If $I\subset K$, $L\subset J$ and $J$ is $f$-covered by $I$, then $K$ $f$-covers $L$.
\item \label{lem:covering:Al3}
If $I$ $f$-covers $J$ and $J$ $g$-covers $K$, then $I$ $(g\circ f)$-covers $K$.
\item If $J\subset f(I)$, and $K_1,K_2\subset J$ are free arcs such that
$K_1\cap K_2$ is at most one point, then $I$ $f$-covers $K_1$, or $I$ $f$-covers $K_2$.\label{lem:covering:Al4}
\item \label{lem:covering:Al5}
If $J\subset f(I)$ is a free arc, then there exist free arcs $J_1,J_2$
such that $\interior J_1\cap \interior J_2=\emptyset$, $J_1\cup J_2=J$, and
$J_1,J_2$ are $f$-covered by $I$.
\item \label{lem:covering-one-endpoint}
If $K=J\cap f(I)$ contains at most one endpoint of $J$, then $K$ is $f$-covered by $I$.
\item \label{lem:covering-star-image}
If $S\subset G$ is a star and $J\subset f(S)$ then there are two free arcs
$E_1,E_2\subset S$ with at most one common point such that $J$ is contained in
the sum of their images, equivalently, $J\subset f(E_1\cup E_2)$.
\end{enumerate}
\end{lemma}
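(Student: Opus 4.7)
The plan is to base all seven parts on a single auxiliary claim: if $\phi : A \to G$ is any continuous map from an arc $A$ (not necessarily a free arc of $G$) and $J \subset G$ is a free arc with endpoints $u_1, u_2$, then whenever $p, q \in A$ satisfy $\phi(p) = u_1$ and $\phi(q) = u_2$, there is a subarc of $A$ mapping onto $J$ under $\phi$. After identifying $A$ with an interval and assuming $p \le q$, pick $p'$ maximally in $[p,q]$ with $\phi(p')=u_1$ and $q'$ minimally in $[p',q]$ with $\phi(q')=u_2$, then examine the component $[p',r]$ of $\phi^{-1}(J) \cap [p',q']$ containing $p'$. Its right endpoint $r$ can only map into $\{u_1, u_2\}$, because $J \setminus \{u_1,u_2\}$ is open in $G$ by the free-arc hypothesis, and the extremality of $p'$ and $q'$ rules out both possibilities unless $r = q'$. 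Hence $\phi([p',q']) \subset J$, and equality then follows from the ordinary interval IVT applied via the identification $J \cong [0,b]$. The crucial point for later use is that $A$ itself is not required to be a free arc of $G$.

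Parts (1)--(5) then fall out quickly (cf.\ \cite[p.~590]{Alseda}). For (1), choose $p,q \in K$ with $f(p) = 0$ and $f(q) = a$ after identifying $I$ with $[0,a]$, and apply the IVT to $f(x)-x$ on the subarc between them. Parts (2) and (3) are direct applications of the auxiliary claim and chaining of coverings. For (5), first extract $K \subset I$ with $f(K) = J$ from the auxiliary claim, pick any preimage $\rho \in K$ of the midpoint of $J$, and apply the claim again to each of the halves of $K$ relative to $\rho$; part (4) is then immediate from (5).

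The main obstacle is in (6), where one must first show that $K = J \cap f(I)$ is connected (so that it makes sense as a free subarc of $J$). Suppose instead that $K = A \sqcup B$ with $A$ to the left of $B$ in the interval parameterization of $J$, and set $\alpha = \max A$, $\beta = \min B$, so that $(\alpha, \beta) \subset J \setminus f(I)$. Pick a path $\gamma$ in the continuum $f(I)$ from $\alpha$ to $\beta$. Because $\gamma$ avoids $(\alpha, \beta)$ while $J \setminus \{u_1,u_2\}$ is open in $G$, every exit of $\gamma$ from $J$ is forced through the endpoints of $J$; a short case analysis on whether $\alpha$ and $\beta$ are interior or endpoints of $J$ forces \emph{both} endpoints of $J$ to lie on $\gamma \subset f(I)$, contradicting the hypothesis that $K$ contains at most one such endpoint. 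Once $K$ is known to be a subarc of $J$, and thus a free arc of $G$, the auxiliary claim (applied with $J$ replaced by $K$ and with preimages of the endpoints of $K$ in $I$) produces the required cover.

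For (7), let $v^*$ be the center of the star $S$ and choose $s_1, s_2 \in S$ with $f(s_i)$ equal to the endpoints of $J$. If $s_1$ and $s_2$ lie in the same branch of $S$, (5) applied to the subarc of that branch joining them delivers the desired $E_1, E_2$. Otherwise set $E_1 = [s_1, v^*]$ and $E_2 = [v^*, s_2]$ in their respective branches; each is a subarc of an edge and hence a free arc of $G$, and $E_1 \cap E_2 = \{v^*\}$. Topologically $E_1 \cup E_2$ is an arc from $s_1$ to $s_2$, and although it is \emph{not} a free arc of $G$ when $S$ has at least three branches (a neighborhood of $v^*$ in $G$ meets every branch of $S$), the auxiliary claim still provides a subarc of $E_1 \cup E_2$ mapping onto $J$, so that $J \subset f(E_1 \cup E_2)$. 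This is precisely the flexibility built into the auxiliary claim by not insisting on a free-arc domain.
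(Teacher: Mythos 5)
Your auxiliary claim is false, and this breaks the proof at its foundation. Take $G$ to be a circle, $J$ a closed arc with endpoints $u_1,u_2$ (so $J$ is a free arc), and $\phi\colon A\to G$ a path from $u_1$ to $u_2$ that traverses the \emph{complementary} arc: then $\phi(p)=u_1$, $\phi(q)=u_2$, but $\phi(A)\cap J=\{u_1,u_2\}$ and no subarc of $A$ maps onto $J$. The gap in your argument is the case where the component of $\phi^{-1}(J)\cap[p',q']$ containing $p'$ is the singleton $\{p'\}$: then $r=p'$, $\phi(r)=u_1$ contradicts neither extremality condition, and nothing follows. On the interval one escapes this because the IVT forces $\phi$ to stay between $u_1$ and $u_2$; on a graph the image can leave $J$ through an endpoint and reconnect outside it. This is not a repairable technicality for part \eqref{lem:covering:Al5}: the statement ``extract $K\subset I$ with $f(K)=J$'' is itself false (let $f$ wrap a free arc $I$ once around a circle, starting and ending at the midpoint of $J$; then $J\subset f(I)$ but $f^{-1}(J)\cap I$ has two components, each mapping onto only half of $J$). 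Part \eqref{lem:covering:Al5} is stated with \emph{two} pieces precisely because of this phenomenon, and a correct proof must analyze the components of $f^{-1}(J)\cap I$: each such component $C$ has $f(C)$ a subarc of $J$ containing an endpoint of $J$ (its boundary points in $I$ map to $\{u_1,u_2\}$ by freeness of $J$), only finitely many components have image of diameter bounded below (uniform continuity plus disjointness), and the two extremal components supply $J_1$ and $J_2$. Your proof of \eqref{lem:covering-star-image} fails for the same reason twice over: even granting a subarc of $E_1\cup E_2$ whose image contains $u_1$ and $u_2$, that image can connect them through $G\setminus J$, so $J\subset f(E_1\cup E_2)$ does not follow; moreover, when $s_1,s_2$ lie in one branch you invoke \eqref{lem:covering:Al5} for the arc $[s_1,s_2]$, whose image need not contain $J$ at all. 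A correct proof of \eqref{lem:covering-star-image} again goes through the components of $f^{-1}(J)\cap S$ and uses that $f(v^*)$ lies in the image of every branch.

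The parts of your proposal that survive are \eqref{lem:covering:Al1} (correct), \eqref{lem:covering:Al3} given \eqref{lem:covering:Al2}, the reduction of \eqref{lem:covering:Al4} to \eqref{lem:covering:Al5}, and the connectedness argument in \eqref{lem:covering-one-endpoint}, which is a genuinely nice observation (a path in $f(I)$ joining the two pieces of $K$ must exit $J$ through both endpoints, contradicting the hypothesis). Parts \eqref{lem:covering:Al2} and \eqref{lem:covering-one-endpoint} are repairable, but not by citing the auxiliary claim: in \eqref{lem:covering:Al2} the relevant map has image \emph{equal} to the arc $J$, so the order on $J$ and the ordinary IVT control everything; in \eqref{lem:covering-one-endpoint} one must additionally use that $f(I)\setminus K=f(I)\setminus J$, so any excursion of the image out of $K$ must pass through an endpoint of $J$, and the forbidden endpoint is not in $f(I)$ while exits through the other endpoint are killed by the extremal choice of $p'$. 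In short: the single uniform lemma you built everything on does not hold on graphs, and the statements most sensitive to this (\eqref{lem:covering:Al5} and \eqref{lem:covering-star-image}) are exactly the ones whose formulations were designed around its failure.
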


\begin{lemma}\label{lem:multiple_covering}
Suppose $Z=C_1\cup\ldots\cup C_n$, where $n\ge 2$ and $C_1,\ldots,C_n$ are
pairwise disjoint free arcs contained in the interior of a free arc
$F\subset G$. Let $J$ be a free arc in $G$ such that
$f(J)$ intersects the interior of any connected component of $F\setminus Z$.
Then $J$ $f$-covers at least $n-1$ sets among $C_1,\ldots,C_{n}$.
\end{lemma}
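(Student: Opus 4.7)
The plan is to identify $F$ with an interval $[0,a]$ via Convention~(C), so that the free arcs $C_j=[a_j,b_j]$ appear in order $0<a_1<b_1<\cdots<b_n<a$; the $n+1$ components of $F\setminus Z$ then become intervals $U_1,\ldots,U_{n+1}$ interlacing the $C_j$'s. For each $j$ I pick $t_j\in J$ with $f(t_j)\in\interior U_j\subset\interior F$, and let $K^{(j)}$ be the connected component of the closed set $J\cap f^{-1}(F)$ which contains $t_j$. There are at most $n+1$ such components, and each is a closed subarc of $J$, hence itself a free arc.

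My key observation is that for any boundary point $p$ of some $K^{(j)}$ inside $J$ which is not an endpoint of $J$, continuity of $f$ combined with the openness of $\interior F$ in $G$ forces $f(p)$ to be an endpoint of $F$: otherwise a whole neighbourhood of $p$ in $J$ would map into $\interior F\subset F$, contradicting the maximality of the component $K^{(j)}$. Unless some $K^{(j)}$ equals $J$---a case in which $f(J)$ is a connected subinterval of $F$ meeting every $U_j$ and therefore equals $F$, so we are done---each image $f(K^{(j)})$ is a closed subinterval of $F$ that must contain at least one endpoint of $F$. Thus $f(K^{(j)})$ has one of three shapes: Left $=[0,q^{(j)}]$ with $q^{(j)}<a$, Right $=[p^{(j)},a]$ with $p^{(j)}>0$, or Full $=F$. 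A standard intermediate value argument inside the free arc $K^{(j)}$ then shows that whenever $C_k\subset f(K^{(j)})$ one can extract a subarc of $K^{(j)}$ whose image is exactly $C_k$; this subarc is a free subarc of $J$, so $J$ $f$-covers $C_k$.

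If a Full component appears, we are immediately done. Otherwise set $Q=\max\{q^{(j)} : K^{(j)}\text{ Left-type}\}$ and $P=\min\{p^{(j)} : K^{(j)}\text{ Right-type}\}$, with the convention $Q=0$, resp.\ $P=a$, if the corresponding class is empty; these extrema are achieved because the collection $\{K^{(j)}\}$ is finite. Then $\bigcup_j f(K^{(j)})\supset [0,Q]\cup[P,a]$, and the hypothesis forces every $\interior U_j$ to meet this set. If $(Q,P)$ were to intersect two distinct $C_{k_1}$ and $C_{k_2}$ with $k_1<k_2$, then by connectedness $(Q,P)$ would contain the entire intermediate $U_{k_1+1}$, contradicting the hypothesis; hence $(Q,P)$ meets at most one $C_k$, so every other $C_k$ is entirely contained in $[0,Q]\cup[P,a]$, and therefore entirely in some single $f(K^{(j)})$. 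By the intermediate value argument above, $J$ $f$-covers each of these at least $n-1$ sets.

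The delicate point in the plan is the key observation on the boundary behaviour of the components $K^{(j)}$, which crucially uses that $F$ is a free arc, so that $\interior F$ is open in $G$; everything else reduces to an elementary combinatorial count on the interval $F$, needing only minor bookkeeping for the endpoint intervals $U_1$ and $U_{n+1}$ whose interiors in $G$ exclude the endpoints of $F$.
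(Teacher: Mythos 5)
Your proof is correct, and it takes a genuinely different route from the paper's. You decompose the \emph{domain}: you pass to the connected components of $J\cap f^{-1}(F)$ containing the chosen preimages $t_j$, use the freeness of $F$ to show that any proper boundary point of such a component must be sent to an endpoint of $F$, classify the resulting images as Left/Right/Full, and finish with a combinatorial count on $[0,Q]\cup[P,a]$. The paper instead works entirely in the \emph{image}: it first observes that at most one $C_i$ can fail to be contained in $f(J)$ (because $f(J)$ is connected and meets the interior of every gap), passes to the convex hull $F'$ of the $C_i$ in $F$, and splits into two cases according to whether $f(J)$ contains one or both endpoints of $F'$, concluding via the already-established covering lemmas, Lemma~\ref{lem:covering}\eqref{lem:covering-one-endpoint} and Lemma~\ref{lem:covering}\eqref{lem:covering:Al5}. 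The paper's argument is shorter because those items of Lemma~\ref{lem:covering} already encapsulate the intermediate-value extraction you carry out by hand; your version is more self-contained and makes explicit the ``boundary of a preimage component maps to the boundary of $F$'' mechanism that is hidden inside those covering lemmas.

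Two small points to tighten. First, when some $K^{(j)}=J$, the image $f(J)$ need not \emph{equal} $F$; it is only an interval containing all the $C_k$ --- but that is all your extraction argument needs, so the conclusion stands. Second, the inference ``$C_k\subset[0,Q]\cup[P,a]$, hence $C_k$ lies in a single $f(K^{(j)})$'' tacitly assumes $Q<P$, so that the two pieces are disjoint and connectedness of $C_k$ applies; if $Q\ge P$ a single $C_k$ could straddle $[P,Q]$ without lying in either piece. This is harmless: two arcs $C_{k_1},C_{k_2}$ with $k_1<k_2$ can both straddle only if $Q<b_{k_1}<a_{k_2}<P$, which places the whole gap between $C_{k_1}$ and $C_{k_1+1}$ inside $(Q,P)$ and yields the same contradiction with your hypothesis, so again at most one $C_k$ is lost --- but the case deserves a sentence.
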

\begin{proof}
Note that at most one among sets $C_1,\ldots,C_n$ is not contained in
$f(J)$. Let $F'$ denote the convex hull of $C_1,\ldots,C_n$ in $F$,
that is, the intersection of all compact connected sets
containing $C_1\cup\ldots\cup C_n$. Clearly, $F'$ is a free arc.
There are two cases to consider. First, assume that $f(J)$ contains
only one endpoint of $F'$. Then use
Lemma~\ref{lem:covering}\eqref{lem:covering-one-endpoint}
and Lemma~\ref{lem:covering}\eqref{lem:covering:Al2} to see that all
$C_i$'s except at most one are $f$ covered by $J$.
In the second case there are free arcs $K$ and $L$ such that
$K\cup L = F'\cap f(J)$ and $K$ and $L$ have at most one common
point. Apply Lemma~\ref{lem:covering}\eqref{lem:covering:Al5} to see
that $K$ and $L$ must be $f$-covered by $J$, and observe that at most
one among sets $C_1,\ldots,C_n$ is not contained in $K\cup L$.
\end{proof}

The proof of the next lemma is left to the reader.

\begin{lemma}\label{lem:geometric3}
For each $\delta>0$ there is a constant $\xi=\xi(\delta)$ such that if
$K$ is a connected subset of $G$ with $\diam K\ge \delta$ then $K$
contains a free arc $J$ with $\diam J\ge \xi$.
\end{lemma}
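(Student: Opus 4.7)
My plan is a pigeonhole argument on the edges of a fixed triangulation of $G$. I would fix any triangulation $\mathcal{K}$ of $G$ with vertex set $V$, set $N:=|V|$, and define $\xi=\xi(\delta):=\delta/(N+1)$. The complement $G\setminus V$ is a disjoint union of finitely many open arcs, each being the interior of an edge of $\mathcal{K}$; the closure of each such open arc is a free arc in $G$.

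The first step is to produce a long arc inside $K$. Replacing $K$ by $\cl{K}$ does not change the diameter, so I may assume $K$ is a nondegenerate subcontinuum of $G$, hence a subgraph, hence arcwise connected. Pick $x,y\in K$ with $d(x,y)\ge \delta$ and an arc $A\subseteq K$ joining them. Since the taxicab metric of $G$ is the infimum of arc-lengths, the length of $A$, which in the taxicab metric coincides with $\diam A$, is at least $d(x,y)\ge\delta$.

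Next is pigeonhole. The set $A\cap V$ is finite with at most $N$ points, so $A\setminus V$ decomposes into at most $N+1$ connected components; pick the one of largest length. Its closure $J$ is a sub-arc of $A$ with $\diam J\ge\delta/(N+1)=\xi$, and $\interior J$ is a connected subset of $G\setminus V$, hence lies in the interior of a single edge of $\mathcal{K}$. In particular $\interior J$ is open in $G$, making $J$ a free arc of $G$, and $J\subseteq A\subseteq K$.

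The only step requiring real care is the initial reduction from a connected set to a continuum. The lemma is invoked in this paper in contexts where $K$ is already a subcontinuum, and I expect this to be the only mildly delicate point for the fully general statement: some extra argument is required to land the free arc inside $K$ itself rather than $\cl{K}$. I would handle it by shrinking $J$ slightly and using that $K$ is dense in $\cl{K}$ inside the edge containing $J$, so that a suitable sub-arc of $\interior J$ can be chosen in $K$, at the cost of losing at most a factor of $2$ in the final constant $\xi$.
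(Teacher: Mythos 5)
First, note that the paper does not actually prove this lemma --- it is explicitly ``left to the reader'' --- so there is no argument of the authors' to compare yours against; I can only assess the proposal on its own terms. Your skeleton (pass to the closure, which is a subgraph and hence arcwise connected; join two points at distance at least $\delta$ by an arc $A$ of length at least $\delta$; pigeonhole over the at most $N+1$ components of $A\setminus V$) is the natural route and is sound. Two steps, however, need repair. The first is that you twice conflate arc length with diameter: the length of $A$ need not equal $\diam A$, and, more importantly, the component of $A\setminus V$ of largest \emph{length} (at least $\delta/(N+1)$) need not have \emph{diameter} at least $\delta/(N+1)$ in the taxicab metric of $G$, since its endpoints may be joined by a shortcut running outside the edge that contains it. This is easily patched: because $\interior J$ lies in a single open edge $E$, any path joining the points of $J$ at one quarter and at three quarters of its length either stays in $E$, and then contains the subarc between them, or must exit $E$ through a vertex of $E$, which costs at least a quarter of the length of $J$ from each of the two points; either way $\diam J\ge\tfrac12\operatorname{length}(J)$, so $\xi=\delta/(2(N+1))$ works. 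Since the lemma only asks for \emph{some} constant $\xi(\delta)$, this is a harmless adjustment.

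The second issue is the one you flagged yourself, and your proposed fix is not yet a proof: density of $K$ in $\cl{K}$ does not by itself produce a subarc of $K$ inside $J$ --- a dense subset of an interval can be totally disconnected. What saves you is the connectedness of $K$, which your sketch never invokes at this point. Concretely: if $z_1<z_2$ were two distinct points of $\interior J\setminus K$, then, $\interior J$ being contained in a single open edge, the set $K\cap(z_1,z_2)=K\cap[z_1,z_2]$ would be simultaneously open and closed in $K$; it is nonempty (by density of $K$ in $\cl{K}\supset J$) and proper (density again, applied to a nonempty open piece of $\interior J$ outside $[z_1,z_2]$), contradicting connectedness of $K$. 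Hence $\interior J\setminus K$ contains at most one point, so $K$ contains a closed subarc of $J$ of at least (roughly) half its length, at the cost of one more factor of $2$ in $\xi$. With these two repairs the argument is complete.
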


%

\section{Global behavior of mixing graph maps}

By definition, a mixing map $f\colon G\defmapsto G$ is pure mixing
if and only if there exists an open set $U\subset G$ such that
$f^n(U)\neq G$ for all $n\ge 0$. We are going to prove that pure
mixing of a graph map is equivalent to the existence of a special
set of \emph{inaccessible points} $\Nacc(f)$, which are not
contained in $\interior f^n(U)$ for all $n\ge 0$ for any
open set $U\subset G$ disjoint from $\Nacc(f)$. The set of
inaccessible points has cardinality bounded above by $\disc(G)$,
is forward invariant, $f(\Nacc(f))\subset \Nacc(f)$, and all its
points are periodic points of $f$. The result is implicit in
\cite{Blokh,Alseda}. But our method of proof is new, and we prove
in addition that for a given open set $U\subset G$ the sets
$\interior f^n(U)$ grow in $G$ as $n\to\infty$. Moreover,
if $x\in\interior f^n(U)$ for some $n\ge 0$, then eventually
$x\in\interior f^k(U)$ for all sufficiently large $k$.

\begin{definition}
We say that a free arc $I_U\subset G$ is an \emph{universal arc} for a map $f$
of $G$ if for any $\delta>0$ there is an integer $M=M(\delta)$ such
that $I_U$ is $f^m$-covered by any free arc $J$ of $G$ with
$\diam J\ge \delta$ for all $m\ge M$.
\end{definition}
\begin{lemma}\label{lem:selfcover}
Let $f\colon G\defmapsto G$ be a mixing map. For every free arc
$F\subset G$ there is an universal arc $I_U\subset F$. Moreover,
\begin{enumerate}
\item\label{con:self-covering} there exists an integer $N_U>0$
such that $I_U$ is $f^n$-covered by itself for all $n\ge N_U$,
in particular
$\interior I_U\cup \interior f(I_U)\cup\ldots \cup \interior f^k(I_U) \subset \interior f^{N_U+k}(I_U)$
for all $k\ge 0$.
\item for any subgraph $J$ of $G$ we have
\[
G\setminus \bigcup _{j=0}^\infty  \interior f^j(J)\subset G\setminus \bigcup _{j=0}^\infty  \interior f^j(I_U).\label{con:universal-catching}
\]
\end{enumerate}
\end{lemma}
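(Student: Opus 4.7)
The plan is to first construct the universal arc $I_U$ inside $F$, verify its universality, and then derive the two moreover clauses as corollaries. For the construction, take $I_U$ to be a strictly smaller free arc inside $F$ (for instance the middle half under the isometric identification of $F$ with a closed interval provided by Convention~\ref{convention}), chosen so that both endpoints of $I_U$ are separated from $\partial F$ by a definite positive distance.

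To prove universality, fix $\delta > 0$ and apply Lemma~\ref{lem:mixing_graph}\eqref{con:mixing_graph_unifrom_component} with $\eps$ much smaller than that separation, obtaining $M=M(\delta)$ such that, for every free arc $J$ with $\diam J \ge \delta$ and every $m \ge M$, each connected component of $G \setminus f^m(J)$ has diameter less than $\eps$. This near-density of $f^m(J)$ must then be upgraded to an exact covering, i.e., a subarc $K \subset J$ with $f^m(K) = I_U$. The approach is to select an auxiliary free arc $I_U^+ \supset I_U$ strictly inside $F$ and argue by cases: when $f^m(J)$ misses an endpoint of $I_U^+$, the set $I_U^+ \cap f^m(J)$ still contains $I_U$ (by the small-gap property) and contains at most one endpoint of $I_U^+$, so Lemma~\ref{lem:covering}\eqref{lem:covering-one-endpoint} gives that this intersection is $f^m$-covered by $J$, and Lemma~\ref{lem:covering}\eqref{lem:covering:Al2} then transports the covering down to $I_U$; when both endpoints of $I_U^+$ lie in $f^m(J)$, one places several disjoint candidate arcs around $I_U$ inside $I_U^+$ and invokes Lemma~\ref{lem:multiple_covering} together with Lemma~\ref{lem:covering}\eqref{lem:covering:Al5} to extract the desired covering. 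This is the crux of the argument.

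Granted universality, property~(1) follows by specialising to $J := I_U$ and $\delta := \diam I_U$: setting $N_U := M(\diam I_U)$, for every $n \ge N_U$ there is a free arc $K \subset I_U$ with $f^n(K) = I_U$, so $I_U$ is $f^n$-covered by itself. For the in-particular clause, for $0 \le j \le k$ the inequality $N_U + k - j \ge N_U$ yields a free arc $K_j \subset I_U$ with $f^{N_U + k - j}(K_j) = I_U$, whence $f^{N_U + k}(K_j) = f^j(I_U)$, giving $f^j(I_U) \subset f^{N_U + k}(I_U)$; since $\interior f^j(I_U)$ is an open set contained in $f^{N_U+k}(I_U)$, it is contained in $\interior f^{N_U + k}(I_U)$, and taking the union over $0 \le j \le k$ gives the stated inclusion. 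For property~(2), any subgraph $J$ contains a free arc $J'$ with $\diam J' \ge \xi(\diam J) > 0$ by Lemma~\ref{lem:geometric3}, so by universality there is $m_0$ with $J'$ $f^{m_0}$-covering $I_U$, giving $I_U \subset f^{m_0}(J') \subset f^{m_0}(J)$; hence for every $k \ge 0$, $f^k(I_U) \subset f^{m_0+k}(J)$, so $\interior f^k(I_U) \subset \interior f^{m_0+k}(J) \subset \bigcup_{j\ge 0} \interior f^j(J)$, and taking the union over $k$ and passing to complements yields (2).

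The main obstacle lies in the coverage step of universality. Mixing guarantees only approximate density of $f^m(J)$ in the sense of small complementary components, whereas $f^m$-covering $I_U$ is a strictly stronger condition requiring a subarc of $J$ to be mapped exactly onto $I_U$. The subtle point is ruling out the ``loop-back'' configurations in which $f^m|_J$ visits $I_U$ only by entering and exiting through a single endpoint, never fully traversing $I_U$; handling this requires the joint use of the covering lemmas with the geometric choice of $I_U^+$ around $I_U$ and the quantitative control provided by Lemma~\ref{lem:mixing_graph}\eqref{con:mixing_graph_unifrom_component}.
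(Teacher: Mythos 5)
Your overall architecture (construct $I_U$, prove universality, then deduce the two numbered claims) matches the paper, and your derivations of (1) and (2) from universality are essentially the paper's own. The problem is the crux step you flag yourself: upgrading approximate density of $f^m(J)$ to exact $f^m$-covering of a \emph{pre-specified} arc. Your Case~2 does not close. Even when both endpoints of $I_U^+$ lie in $f^m(J)$, mixing still permits $G\setminus f^m(J)$ to have a single small complementary component sitting anywhere inside $I_U^+$ --- in particular inside $I_U$ itself --- in which case $I_U\not\subset f^m(J)$ and no subarc of $J$ can be mapped onto $I_U$ at time $m$. Placing disjoint candidate arcs around $I_U$ and invoking Lemma~\ref{lem:multiple_covering} cannot repair this: that lemma only guarantees that $n-1$ of the $n$ candidates are covered, and the uncovered one may be precisely the one you need; since the candidates are disjoint, no arrangement makes ``all but one are covered'' imply ``the fixed target $I_U$ is covered.'' Lemma~\ref{lem:covering}\eqref{lem:covering:Al5} has the same defect --- it splits an arc into two covered halves without covering the arc itself. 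So no single application of $f^m$ can force the covering of an arc chosen in advance independently of the dynamics, which is what your ``middle half of $F$'' choice amounts to.

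The paper escapes this with a two-stage composition that is absent from your proposal. It places three disjoint arcs $A,B,C$ in $F$ and first uses mixing once, at a single time $k$, together with Lemma~\ref{lem:multiple_covering}, to select $I_U\in\{A,B,C\}$ that is $f^k$-covered by \emph{both} outer arcs $A$ and $C$; thus $I_U$ is not an arbitrary subarc of $F$ but one picked out by the dynamics. Then, for an arbitrary $J$ with $\diam J\ge\delta$ and all large $n$, Lemma~\ref{lem:multiple_covering} forces $J$ to $f^n$-cover at least two of $A,B,C$, hence at least one of $A,C$, and composing via Lemma~\ref{lem:covering}\eqref{lem:covering:Al3} shows that $J$ $f^{n+k}$-covers $I_U$. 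The indeterminacy of \emph{which} candidate is covered at time $n$ is absorbed by the fact that both admissible outcomes feed into $I_U$ after $k$ further iterates. Without this (or an equivalent) device, your proof of universality, and hence of the whole lemma, does not go through.
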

\begin{proof}Let $F$ be any free arc in $G$.
According to our convention~\textbf{(C)} we may identify $F$ with $[0,7]$. We define
\[
A=[1,2],\quad B=[3,4],\quad C=[5,6],\quad \text{and}\quad I_j=(2j,2j+1),\quad \text{for}\;j=0,\ldots,3.
\]
Since $f$ is mixing we can find an integer $k>0$ such that for any $E\in\{A,B,C\}$ we
have $f^k(E)\cap I_j\neq \emptyset$ for any $0\le j \le 3$ . Using
Lemma~\ref{lem:multiple_covering} with $Z=A\cup B\cup C$ we deduce that $A$ and $C$
must $f^k$-cover at least two intervals among $A,B,C$. It follows that there is
$I_U\in\{A,B,C\}$ which is $f^k$-covered by both, $A$ and $C$. We will show that
$I_U$ is an universal arc. Fix $\delta>0$ and some closed interval $J$ of $G$
with $\diam J\ge \delta$. Let $\eps=\max\{\diam I_j:0\le j\le 1\}$.
Lemma~\ref{lem:mixing_graph}\eqref{con:mixing_graph_unifrom_component} gives us
$N=N(\eps,\delta)$ such that for every $n\ge N$ each connected component of
$G\setminus f^n(J)$ has diameter less than $\eps$. We conclude that $f^n(J)$ must
intersect every connected component of $F\setminus (A\cup B\cup C)$, hence
Lemma~\ref{lem:multiple_covering} guarantees that for any $n\ge N$ some $D_n\in\{A,C\}$
must be $f^n$-covered by $J$. By the above, and Lemma~\ref{lem:covering}\eqref{lem:covering:Al3},
the free arc $I_U$ is $f^n$-covered by $J$ for any $n\ge N+k$. Therefore, we set
$M(\delta)=N+k$, and $I_U$ is an universal arc for $f$ as claimed. It follows
immediately that \eqref{con:self-covering} holds with $N_U=N(\diam I_U,\eps)+k$.
For the proof of \eqref{con:universal-catching} we fix subgraph $J$ of $G$, and we
let $\delta=\diam J$. Lemma~\ref{lem:geometric3} and the definition of $I_U$ above,
give us $M=M(\xi(\delta))$ such that $f^j(I_U)\subset f^{M+j}(J)$ for all $j\ge 0$,
and so
\[
\bigcup _{j=0}^\infty  \interior f^j(I_U)\subset \bigcup _{j=M}^\infty  \interior f^j(J)\subset \bigcup _{j=0}^\infty  \interior f^j(J).
\]
Taking complements we have
\begin{equation}\label{eq:two}
G\setminus \bigcup _{j=0}^\infty  \interior f^j(J)\subset G\setminus \bigcup _{j=0}^\infty  \interior f^j(I_U),
\end{equation}
as desired.
\end{proof}

It can be shown that a transitive graph map is mixing if and only if it has
an universal arc, but we will not pursue this here. We are now in a position
to define inaccessible points and characterize them with the help of the
interiors of images of the universal arc.

\begin{definition}
Let $f$ be a map of $G$, and let $\mathcal{G}$ denote the family of all
subgraphs of $G$. We define the set of \emph{inaccessible} points of $f$ by
\[
\Nacc(f)=
G\setminus \bigcap_{J\in\mathcal{G}} \bigcup_{k=0}^\infty \interior f^k (J) =
\bigcup_{J\in\mathcal{G}} \bigcap_{k=0}^\infty G\setminus \interior f^k (J),
\]
where the second equality above follows easily from elementary properties of
operations involved.
\end{definition}

\begin{lemma}\label{lem:universal-int-gives-nacc}
If $f$ is a mixing graph map and $I_U$ is an universal arc for $f$ then
\[
\Nacc(f)=G \setminus \bigcup _{k=0}^\infty \interior f^k(I_U).
\]
\end{lemma}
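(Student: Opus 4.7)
The plan is to prove both inclusions directly, leveraging part~\eqref{con:universal-catching} of Lemma~\ref{lem:selfcover} for one direction and simply plugging $J=I_U$ into the definition of $\Nacc(f)$ for the other.

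For the inclusion $\supseteq$, I would begin by observing that the universal arc $I_U$, being a free arc, is in particular a nondegenerate subcontinuum of $G$ and hence belongs to the family $\mathcal{G}$ of subgraphs of $G$. Therefore, using the second (union-over-$J$) expression in the definition of $\Nacc(f)$ with the specific choice $J=I_U$, any point $x\in G\setminus\bigcup_{k=0}^\infty \interior f^k(I_U)$ lies in $\bigcap_{k=0}^\infty (G\setminus \interior f^k(I_U))$, and therefore in $\Nacc(f)$.

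For the inclusion $\subseteq$, let $x\in \Nacc(f)$ be arbitrary. By definition there exists some $J\in\mathcal{G}$ with
\[
x\in \bigcap_{k=0}^\infty \bigl(G\setminus \interior f^k(J)\bigr) = G\setminus \bigcup_{k=0}^\infty \interior f^k(J).
\]
Now I would invoke Lemma~\ref{lem:selfcover}\eqref{con:universal-catching}, which gives the inclusion
\[
G\setminus \bigcup _{j=0}^\infty  \interior f^j(J)\subset G\setminus \bigcup _{j=0}^\infty  \interior f^j(I_U),
\]
and this places $x$ in $G\setminus \bigcup_{k=0}^\infty \interior f^k(I_U)$, completing the argument.

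Because both inclusions reduce to a one-line application of what has been proved, there is no real obstacle; the only point requiring a moment of care is the verification that $I_U$ itself qualifies as an element of $\mathcal{G}$ (which is immediate since a free arc is a nondegenerate subcontinuum, hence a topological graph). I would therefore keep the write-up very short, essentially a two-direction chain using Lemma~\ref{lem:selfcover} and the second formula for $\Nacc(f)$ given in the definition.
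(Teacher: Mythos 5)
Your proposal is correct and follows essentially the same route as the paper: the paper's proof also rewrites $\Nacc(f)$ as $\bigcup_{J\in\mathcal{G}} \bigl(G\setminus\bigcup_{k=0}^\infty\interior f^k(J)\bigr)$ and applies Lemma~\ref{lem:selfcover}\eqref{con:universal-catching}, with the reverse inclusion coming from the fact that $I_U\in\mathcal{G}$. Your write-up just makes the two inclusions explicit, which is fine.
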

\begin{proof}
It is enough to observe that
$$
\Nacc(f)=\bigcup_{J\in\mathcal{G}} G \setminus \bigcup _{k=0}^\infty \interior f^k(J),
$$
and next apply Lemma~\ref{lem:selfcover}\eqref{con:universal-catching}.
\end{proof}
\begin{lemma}\label{lem:universal-universal-arc}
Let $f$ be a mixing graph map. A free arc is an universal arc for $f^l$ for some
$l\ge 1$ if and only if it is an universal arc for $f^n$ for all $n\ge 1$.
\end{lemma}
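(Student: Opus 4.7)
My plan is as follows. The reverse implication is immediate — if $I_U$ is universal for every $f^n$, simply take $l=1$. So I assume $I_U$ is universal for $f^l$ for some fixed $l\ge 1$ and fix an arbitrary $n\ge 1$, and I will prove $I_U$ is also universal for $f^n$.

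The key idea is to route the covering through an auxiliary universal arc for $f^n$ nested inside $I_U$. Since $f^n$ is itself a mixing graph map (for open $U, V$, mixing of $f$ gives $f^m(U)\cap V\neq\emptyset$ for all $m\ge N$, so $f^{nk}(U)\cap V\neq\emptyset$ for all $k\ge \lceil N/n\rceil$), I would invoke Lemma~\ref{lem:selfcover} applied to $f^n$ with the free arc $F := I_U$, producing a free subarc $I' \subset I_U$ that is a universal arc for $f^n$; set $\delta' := \diam I' > 0$. Then for any $\delta > 0$ and any free arc $J$ with $\diam J \ge \delta$: universality of $I'$ for $f^n$ provides $M_1(\delta)$ such that $J$ $f^{nm_1}$-covers $I'$ for all $m_1 \ge M_1(\delta)$, and universality of $I_U$ for $f^l$ applied to the free arc $I'$ of diameter $\delta'$ provides $M_2$ such that $I'$ $f^{lm_2}$-covers $I_U$ for all $m_2 \ge M_2$. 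Composing these covering relations via Lemma~\ref{lem:covering}\eqref{lem:covering:Al3}, one concludes that $J$ $f^{nm_1 + lm_2}$-covers $I_U$ whenever $m_1 \ge M_1(\delta)$ and $m_2 \ge M_2$.

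To conclude that $I_U$ is universal for $f^n$, the final ingredient is to write $nM = nm_1 + lm_2$ with the required lower bounds on $m_1, m_2$, for every sufficiently large $M$. This is a routine Frobenius-type calculation: with $d = \gcd(n,l)$, $n = dn'$, $l = dl'$ and $\gcd(n',l')=1$, I would pick $m_1 \in [M_1(\delta),\, M - l'M_2/n']$ congruent to $M$ modulo $l'$, which exists whenever $M\ge M_1(\delta) + l'\lceil M_2/n'\rceil$, and then take $m_2 = n'(M-m_1)/l' \ge M_2$. The main obstacle, however, is conceptual rather than combinatorial: one has to recognize that covering must be threaded through an intermediate universal arc $I' \subset I_U$ for $f^n$ (not for $f^l$), so that both time-scales $\{nm\}$ and $\{lm\}$ can be reconciled in the compositional chain while the covering target remains $I_U$ itself.
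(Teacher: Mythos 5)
Your proof is correct, but it takes a genuinely different route from the paper's. The paper first observes the easy implication (universal for $f$ implies universal for every $f^l$), and then reduces everything to showing that an arc $I_U^l$ universal for $f^l$ is universal for $f$ itself: it takes a universal arc $I_U$ for $f$ as the intermediate link and composes $J \Rightarrow I_U \Rightarrow I_U^l$, where the first covering holds for \emph{every} time $m\ge M(\delta)$ and the second for one fixed time $L$ (a multiple of $l$). Since the admissible times for the first arrow already form a full tail of integers, the composite times do too, and no arithmetic is needed; universality for all $f^n$ then follows from the easy implication. You instead fix $n$, thread the covering through an auxiliary universal arc for $f^n$, and must reconcile the two progressions $n\mathbb{Z}$ and $l\mathbb{Z}$ by a $\gcd$/Frobenius computation — which you carry out correctly (the witness $m_1=M-l'\lceil M_2/n'\rceil$ does the job), but which the paper's ordering of the two reductions renders unnecessary. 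Two small remarks: the nesting $I'\subset I_U$ plays no role in your argument and can be dropped; and had you taken your intermediate arc to be universal for $f$ rather than for $f^n$ (freezing $m_2=M_2$), the first time-scale would again be a full tail and the congruence bookkeeping would disappear, recovering essentially the paper's proof.
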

\begin{proof}It is clear that an universal arc for $f$ is also universal
for $f^l$ for any $l\ge 2$. It is now enough to show that an universal arc
$I_U^l$ for $f^l$, where $l\ge 2$ is also universal for $f$. To this end,
fix $\delta>0$, let $I_U$ be an universal arc for $f$, and let $M=M(\delta)$
be such that every free arc $J\subset G$ with $\diam J\ge \delta$ covers $I_U$
through $f^m$ for all $m\ge M$. We
can also find an $L\ge 0$ (a multiple of $l$) such that $I_U^l$ is $f^L$
covered by $I_U$. Now, every free arc $J\subset G$ with $\diam J\ge \delta$
covers $I_U^l$ through $f^m$ for every $m\ge M+L$, and the lemma follows.
\end{proof}

\begin{theorem}\label{thm:nacc}
For each mixing map $f$ on a graph $G$ the set $\Nacc(f)$ has the following properties:
\begin{enumerate}
\item For any $\delta,\eps>0$ there is an integer $K=K(\eps,\delta)$ such that for any
subgraph $J$ of $G$ with $\diam J\ge \delta$ we have
$G\setminus \interior f^{k}(J) \subset \ball(\Nacc(f),\eps)$ for all $k\ge K$.\label{con:nacc1}
\item The set $\Nacc(f)$ has less than $\disc(G)$ elements.
Moreover, $\Nacc(f)\neq\emptyset$ if and only if $f$ is pure mixing.\label{con:nacc2}
\item Each point $x\in\Nacc(f)$ is periodic for $f$ and $f(\Nacc(f))=\Nacc(f)$.\label{con:nacc3}
\item For every $n\ge 1$ we have $\Nacc(f)=\Nacc(f^n)$.\label{con:nacc4}
\end{enumerate}
\end{theorem}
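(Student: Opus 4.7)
All four parts flow from the universal arc $I_U$ of Lemma~\ref{lem:selfcover} and the monotone structure of the sets $V_k:=\interior f^k(I_U)$. The self-covering gives $V_0\cup\ldots\cup V_k\subset V_{k+N_U}$, so the partial unions $W_m:=V_0\cup\ldots\cup V_m$ form an increasing family of open sets whose union equals $G\setminus \Nacc(f)$ by Lemma~\ref{lem:universal-int-gives-nacc}. Consequently the complements $C_m:=G\setminus W_m$ form a decreasing sequence of closed subsets of the compact space $G$ with $\bigcap_m C_m=\Nacc(f)$. My plan is to prove the parts in the order (1), (4), (2), (3).

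For (1), compactness converts the pointwise convergence $C_m\searrow\Nacc(f)$ into uniform convergence: given $\eps>0$, there is $m_0$ with $C_{m_0}\subset \ball(\Nacc(f),\eps)$. Universality of $I_U$ applied (via Lemma~\ref{lem:geometric3}) to a free subarc of any subgraph $J$ with $\diam J\ge\delta$ produces $M=M(\delta)$ with $I_U\subset f^M(J)$, so $V_k\subset \interior f^{M+k}(J)$ for all $k\ge 0$. Combined with the self-covering inclusion $W_{k-M-N_U}\subset V_{k-M}$, this yields $G\setminus \interior f^k(J)\subset C_{k-M-N_U}\subset \ball(\Nacc(f),\eps)$ for every $k\ge K:=M+N_U+m_0$, proving (1). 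Part (4) is then immediate from Lemma~\ref{lem:universal-universal-arc}: the same $I_U$ is universal for every $f^n$, hence $\Nacc(f^n)=G\setminus\bigcup_k V_{nk}$, and the observation that $V_j\subset V_{nk}$ whenever $nk\ge N_U+j$ shows $\bigcup_k V_k=\bigcup_k V_{nk}$.

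For (2), the equivalence is handled by the same framework: if $f$ is exact then some $f^n(I_U)=G$ and $\Nacc(f)=\emptyset$; conversely $\Nacc(f)=\emptyset$ together with compactness forces $W_{m_0}=G$, so $V_{m_0+N_U}=G$ and hence $f^{m_0+N_U}(I_U)=G$, from which universality of $I_U$ yields exactness on every open set. The size bound $|\Nacc(f)|<\disc(G)$ is the most delicate piece: applying (1) with small $\eps$, the closed set $G\setminus \interior f^k(I_U)$ becomes a neighborhood of $\Nacc(f)$ which decomposes into finitely many connected components controlled by the boundary $\partial f^k(I_U)$ of the subgraph $f^k(I_U)$ in $G$. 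A topological counting argument relating these components to Nadler's characterization of $\disc(G)$ then yields the strict inequality.

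For (3), forward invariance $f(\Nacc(f))\subset \Nacc(f)$ is the subtlest step. I would argue by contradiction: if $p\in \Nacc(f)$ but $f(p)\in V_k$, continuity of $f$ combined with (1) applied to a small subgraph around $p$ (and Lemma~\ref{lem:universal-universal-arc} to pass to a suitable iterate of $f$) forces $p$ itself into some $V_j$, contradicting $p\in \Nacc(f)$. The reverse inclusion $f(\Nacc(f))\supset \Nacc(f)$ uses that $f$ is mixing, hence surjective: a point $q\in \Nacc(f)$ all of whose preimages lie in $G\setminus \Nacc(f)$ would, via (1) and continuity, itself be forced into some $V_j$. Once $f$ restricts to a bijection of the finite set $\Nacc(f)$, every point is periodic. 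The main obstacles I anticipate are the counting argument for the $\disc(G)$ bound in (2) and the forward-invariance argument in (3), both of which require a careful analysis of how the connected subgraph $f^k(I_U)$ sits inside $G$ relative to its branching structure.
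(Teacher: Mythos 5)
Your framework---the increasing open sets $V_k=\interior f^k(I_U)$, their partial unions $W_m$, and the decreasing compact complements shrinking to $\Nacc(f)$---is exactly the paper's, and your treatments of (1), (4), and the equivalence ``$\Nacc(f)=\emptyset$ iff $f$ is exact'' are correct and essentially identical to the published proof. The two steps you yourself flag as delicate are, however, precisely where your argument has genuine holes. For the bound on the cardinality of $\Nacc(f)$ you never actually give the counting argument. The mechanism the paper uses has two ingredients your sketch omits: first, the lemma of Nadler it invokes (\cite[Lemma 4.2]{Nadler}), which states that the complement of any \emph{connected} subset of $G$ has fewer than $\disc(G)$ connected components---this, not a count of boundary points of $f^k(I_U)$, is how the disconnecting number enters; second, Lemma~\ref{lem:mixing_graph}\eqref{con:mixing_graph_unifrom_component}, by which the diameters of the components of $G\setminus f^k(I_U)$ tend to $0$, so that for $k$ large any two distinct points of $\Nacc(f)$ must lie in distinct components. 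If $\Nacc(f)$ had at least $\disc(G)$ points, the second fact would force at least $\disc(G)$ components of $G\setminus f^k(I_U)$, contradicting the first. (Note also that $G\setminus \interior f^k(I_U)$ need not be a \emph{neighborhood} of $\Nacc(f)$; it is merely a closed set containing $\Nacc(f)$ and contained in $\ball(\Nacc(f),\eps)$.)

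In (3) your direct argument for $f(\Nacc(f))\subset\Nacc(f)$ does not work as described: from $f(p)\in V_k$ continuity gives you an open neighborhood of $p$ that maps \emph{into} $f^k(I_U)$, which says nothing about $p$ lying in $\interior f^j(J)$ for any subgraph $J$ and any $j$---accessibility does not propagate backwards by continuity, and a genuinely direct proof of forward invariance essentially requires the analysis of inaccessible sides that the paper only develops in the following section. Fortunately this step is redundant. Your other inclusion, $\Nacc(f)\subset f(\Nacc(f))$, is exactly the paper's argument: for $x\in\Nacc(f)$ with $f^{-1}(x)\cap\Nacc(f)=\emptyset$ choose $\eps$ with $f^{-1}(x)\subset G\setminus B$ for $B=\cl{\ball(\Nacc(f),\eps)}$, note that $U=G\setminus f(B)$ is an open neighborhood of $x$ with $f^{-1}(U)\subset G\setminus B\subset f^{n}(I_U)$ by (1), and use surjectivity of the mixing map $f$ to conclude $x\in U\subset \interior f^{n+1}(I_U)$, a contradiction. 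Once this inclusion is established, finiteness of $\Nacc(f)$ from (2) gives $f(\Nacc(f))=\Nacc(f)$ by pigeonhole (since $\Nacc(f)\subset f(\Nacc(f))$ while $f(\Nacc(f))$ has at most as many elements as $\Nacc(f)$), so $f$ restricts to a bijection of the finite set $\Nacc(f)$ and every point of it is periodic. Drop the flawed forward-invariance step, make this counting explicit, and supply the component argument for (2); the proof is then complete and coincides with the paper's.
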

\begin{proof}
\textbf{(1):}~Let $I_U$ be an universal arc for $f$. By
Lemma~\ref{lem:selfcover}\eqref{con:self-covering} and
Lemma~\ref{lem:universal-int-gives-nacc}, it is enough to
prove that for any $\eps>0$ there is an integer $n=n(\eps)$
such that
\[
G\setminus \interior f^{n}(I_U) \subset \ball(\Nacc(f),\eps).
\]
Suppose on the contrary, that there is $\eps>0$ such that
for every  $n\ge 0$ we can find $x_n\notin  \ball(\Nacc(f),\eps)\cup \interior f^{n}(I_U)$.
Passing to a subsequence if necessary, we can assume that $\bar{x}$ is the limit
of the sequence $\{x_n\}$. Clearly,
$\bar{x}\notin \ball(\Nacc(f),\eps)$. If $\bar{x}\in \interior f^{\bar{n}}(I_U)$
for some $\bar{n}\ge 0$, then $\bar{x}\in \interior f^{\bar{n}}(I_U)\subset \interior f^{n}(I_U)$ for
all $n$  large enough, hence $x_n\in\interior f^n(I_U)$ for some $n$,
contradicting definition of the sequence $\{x_n\}$. But then
$\bar{x}\in G\setminus\interior f^{n}(I_U)$ for all $n\ge 0$,
hence $\bar{x}\in\Nacc(f)$, which is a contradiction.

\noindent \textbf{(2):} By
Lemma~\ref{lem:mixing_graph}\eqref{con:mixing_graph_unifrom_component}, the
diameters of components of $\cl{G\setminus f^k(I_U)}=G\setminus\interior f^k(J)$
tend to $0$ as $k\to\infty$. If there were at least $\disc(G)$ points in
$\Nacc(f)$, then $G\setminus f^k(I_U)$ would have to have at least $\disc(G)$
components for $k$ large enough. But by \cite[Lemma 4.2]{Nadler}, for each $k\ge 0$
the set ${G\setminus f^k(I_U)}$ has less than $\disc(G)$ components, since
$f^k(I_U)$ is connected, a contradiction. It is clear from
Lemma~\ref{lem:selfcover}\eqref{con:universal-catching}
and \eqref{con:nacc1} above that $\Nacc(f)\neq\emptyset$ if and only if $f$ is
pure mixing.

\noindent\textbf{(3):} It is enough to show that
for every $x\in \Nacc(f)$ there is $y\in \Nacc(f)$ such that $f(y)=x$.
Assume, contrary to our claim, that there exists a point $x\in\Nacc(f)$
such that $f^{-1}(x)$ is disjoint from $\Nacc(f)$. Therefore, we can find an
$\eps>0$ such that $f^{-1}(x)\subset G\setminus B$ where $B=\cl{B(\Nacc(f),\eps)}$.
Since $x\not\in f(B)$, the set $U=G\setminus f(B)$ is an open neighborhood of $x$
such that
\[
f^{-1}(U)=f^{-1}( G\setminus f(B))\subset G\setminus B.
\]
By \eqref{con:nacc1} above, there exists $n>0$ such that
$G\setminus \cl{B(\Nacc(f),\eps)}$ is contained in $f^n(I_U)$.
Therefore $U\subset f^{n+1}(I_U)$, and $x\in\interior f^{n+1}(I_U)$,
contradicting the assumption $x\in\Nacc(f)$.

\noindent\textbf{(4):} Fix $n\ge 1$. It is clear that $\Nacc(f)\subset \Nacc(f^n)$. We will show that the converse inclusion also holds.
Let $I_U^1$ and $I_U^n$ be universal arcs for $f$ and $f^n$, respectively.
Arguing as in proof of Lemma~\ref{lem:selfcover}\eqref{con:universal-catching}
and applying \eqref{con:self-covering} of the same Lemma we get $N_U^1$ such that
$\interior I_U^1 \cup \interior f(I_U^1)\cup\ldots\cup \interior f^k(I_U^1)\subset \interior f^{N_U^1+k}(I_U^n)$
for $k\ge 0$. In particular, taking $L>0$ such that $n\cdot L > N_U^1$ we have
\[
\bigcup_{j=0}^{nl-N_U^1} \interior f^j(I_U^1)\subset \interior (f^n)^l(I_U^n)\quad\text{for every}\;l\ge L.
\]
Summing over all $l\ge L$ and taking complement of both sides finishes the proof.
\end{proof}
%
%

\section{Local behavior around inaccessible points --- inaccessible sides}

To establish the main theorem of the next section, we need to describe a local
behavior of a map $f$ around its inaccessible points. To do it rigorously
we have to introduce some technical terminology. Let $G$ be a graph with a fixed triangulation $\Lom$.
A \emph{canonical neighborhood} of a point $p\in G$ is an open set $U$ such that
$\cl{U}$ is an $n$-star, where $n=\val(p)$, and every connected component of
$U\setminus \{p\} $ is homeomorphic with $(0,1)$. Moreover, we demand canonical
neighborhoods of vertices of $G$ to have disjoint closures. If $f$ is pure mixing
map, then without lost of generality we assume that all inaccessible points are
vertices. Clearly, any point $p\in G$ has arbitrarily small canonical neighborhoods,
and there is $\eps_0>0$ such that for every $p\in G$ the open ball $\ball(p,\eps_0)$
is a canonical neighborhood of $p$. We use uniform continuity of $f$ to get an
$\eps_c>0$ such that $f(\ball(x,\eps_c))\subset\ball(f(x),\eps_0)$.
From now on we assume that with every point $p\in G$ we associated its canonical
neighborhood $U_p=\ball(p,\eps)$. A pair $(p,S_p)$, where $p\in G$ and $S_p$ is a
connected component of $U_p\setminus \{p\}$ is called a \emph{side} of $p$. By a
slightly abuse of our terminology, we will identify a side with the sole set $S_p$,
where the subscript will remind us which point in $G$ we use as a base for our side.
If $0<\delta\le \eps_0$, and $S_p$ is a side, then a \emph{ray} of length $\delta$
in the direction $S_p$ is a subset $R(S_p,\delta)=S_p\cap\ball(p,\delta)$.
Nevertheless, our analysis is local, that is, we are investigating $f$ restricted to
$\ball(\Nacc(f),\eps)$ for small $\eps>0$, our considerations will not depend of the
choices we made above. 
If $x,y$ are two points in a canonical neighborhood $U_p$ of some point $p\in G$ then
we let $\langle x,y\rangle$ to denote the convex hull of $x$ and $y$ in $U_p$.
It is well defined since $U_p$ is a tree.

\begin{description}
\item[Standing assumption] For the rest of this section we fix a graph $G$ and a
mixing map $f\colon G\defmapsto G$. We also let $I_U$ to be an universal arc  for $f$ and fix a triangulation $\Lom$ of $G$ such that all inaccessible points are vertices.
\end{description}


\begin{definition}
We say that a side $S_p$ is \emph{accessible}
if there is $n\ge 0$ such that $S_p\subset f^n(I_U)$.
A side $S_p$ is an \emph{inaccessible side} if it is not accessible.
\end{definition}

\begin{lemma}\label{lem:char-nacc-side}
A point $p\in G$ is inaccessible for $f$ (i.e., $p\in\Nacc$) if and only if it has an inaccessible side.
\end{lemma}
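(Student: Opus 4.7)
The plan is to base the proof on the identity $\Nacc(f)=G\setminus W$ with $W:=\bigcup_{k\ge 0}\interior f^k(I_U)$, which is Lemma~\ref{lem:universal-int-gives-nacc}, together with the ``eventual monotonicity'' provided by Lemma~\ref{lem:selfcover}\eqref{con:self-covering}: whenever $x\in\interior f^k(I_U)$ for some $k$, then $x\in\interior f^m(I_U)$ for every $m\ge k+N_U$. It then suffices to prove the equivalent reformulation that $p\in W$ if and only if every side of $p$ is accessible.

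For the direction ``all sides of $p$ accessible $\Rightarrow p\in W$'', I would take inclusions $S_p^{(i)}\subset f^{k_i}(I_U)$ for each of the finitely many sides of $p$. Since each $S_p^{(i)}$ is open in $G$ and is contained in the closed set $f^{k_i}(I_U)$, it actually lies in $\interior f^{k_i}(I_U)$; eventual monotonicity then pushes all these inclusions up to a single exponent $k$, giving $U_p\setminus\{p\}=\bigcup_i S_p^{(i)}\subset\interior f^k(I_U)\subset f^k(I_U)$. Because $U_p\setminus\{p\}$ is dense in $U_p$ and $f^k(I_U)$ is closed, this forces $p\in f^k(I_U)$ and hence $U_p\subset f^k(I_U)$; openness of $U_p$ then yields $U_p\subset\interior f^k(I_U)\subset W$, so $p\in W$.

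For the converse direction I would fix any side $S_p$ of $p$ and show directly that $\cl{S_p}\subset W$; once this is established, compactness of $\cl{S_p}$ produces a finite subcover by sets $\interior f^{k_j}(I_U)$, and eventual monotonicity merges these into a single $n$ with $\cl{S_p}\subset\interior f^n(I_U)\subset f^n(I_U)$, so that $S_p$ is accessible. The inclusion $\cl{S_p}\subset W$ rests on the setup of canonical neighborhoods: the Standing Assumption places all inaccessible points among the vertices of $\Lom$, and the canonical neighborhoods of vertices were chosen to have pairwise disjoint closures, with $\eps_0$ further chosen small enough that for $p$ not a vertex $\cl{U_p}$ meets no vertex of $\Lom$ at all. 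Either way this gives $\cl{U_p}\cap\Nacc(f)\subset\{p\}$, and because we assumed $p\in W=G\setminus\Nacc(f)$, the intersection $\cl{S_p}\cap\Nacc(f)$ is in fact empty, i.e.\ $\cl{S_p}\subset W$.

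The only genuinely delicate step is the geometric observation $\cl{U_p}\cap\Nacc(f)\subset\{p\}$; once that is at hand, the rest of the proof is bookkeeping with the two preceding lemmas.
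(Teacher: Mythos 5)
Your proof is correct and follows essentially the same route as the paper's: both arguments reduce the statement to the characterization $\Nacc(f)=G\setminus\bigcup_{k\ge 0}\interior f^k(I_U)$ from Lemma~\ref{lem:universal-int-gives-nacc} and then use the eventual monotonicity of Lemma~\ref{lem:selfcover}\eqref{con:self-covering} to pass between accessibility of sides and interiority of $p$ (where you cover the compact set $\cl{S_p}$ by the sets $\interior f^{k}(I_U)$, the paper invokes Theorem~\ref{thm:nacc}\eqref{con:nacc1}; this is a cosmetic difference). The only caveat is that no uniform $\eps_0$ can make $\cl{U_p}$ avoid every vertex for every non-vertex $p$, but the property you actually need, $\cl{U_p}\cap\Nacc(f)\subset\{p\}$, is precisely what the paper's choice of canonical neighborhoods is intended to guarantee, and the paper relies on it just as tacitly.
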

\begin{proof}
By Lemma~\ref{lem:universal-int-gives-nacc} and Theorem~\ref{thm:nacc}\eqref{con:nacc1},
a point $p$ is inaccessible for $f$ if and only if $p$ is not an interior
point of $f^n(I_U)$ for every $n\ge 0$. 
Equivalently, every open neighborhood of $p$ has nonempty intersection
with $G\setminus f^n(I_U)$ for every $n\ge 0$. From the above
and Lemma~\ref{lem:selfcover}\eqref{con:self-covering} we conclude
that $p\in\Nacc(f)$ if and only if there is a side $S_p$ which is
not contained in $f^n(I_U)$ for every $n\ge 0$.
\end{proof}
\begin{lemma}\label{lem:nacc-side-counting}
The map $f$ has less than $\nadler(G)$ inaccessible sides.
\end{lemma}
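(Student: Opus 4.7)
My plan is to carry out an Euler-characteristic calculation on one large iterate $H = g^n(I_U)$, exploiting the arithmetic identity $\nadler(G) = \disc(G) + r(G)$ (equivalent to the formula $\nadler(G) = \disc(G) - \chi(G) + 1$ recalled in the introduction, where $r(G) = 1 - \chi(G)$ is the first Betti number). The number $c$ of components of $G \setminus H$ already satisfies $c < \disc(G)$ by the Nadler bound used in the proof of Theorem~\ref{thm:nacc}\eqref{con:nacc2}, so it suffices to show
\[
k \;\le\; c + r(G),
\]
where $k$ is the number of inaccessible sides; the $+r(G)$ term will absorb the extra inaccessible sides produced by cycles of $G$.

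\textbf{Setup.} I would first replace $f$ by $g = f^{N_U}$: Lemma~\ref{lem:universal-universal-arc}, Lemma~\ref{lem:selfcover}\eqref{con:self-covering} and Theorem~\ref{thm:nacc}\eqref{con:nacc4} show that $I_U$ is still universal for $g$, that $\Nacc(f) = \Nacc(g)$, that accessibility of sides is preserved, and crucially that $I_U \subset g(I_U)$, so $H_n := g^n(I_U)$ is monotonically increasing as a sequence of subgraphs. Lemma~\ref{lem:mixing_graph}\eqref{con:mixing_graph_unifrom_component} and Theorem~\ref{thm:nacc}\eqref{con:nacc1} then ensure that for $n$ large every component of $G \setminus H_n$ sits inside a canonical neighborhood of some $p \in \Nacc(f)$; monotonicity combined with the finiteness of the set of accessible sides lets me also require that every accessible side already lies in $H_n$. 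Each component $K$ is then a connected subset of a star and its closure is a subtree; I classify $K$ as \emph{central} if it contains its basepoint $p$ (so $\cl{K}$ is a sub-star at $p$ with arms only in inaccessible sides of $p$) or \emph{outer} otherwise (so $\cl{K}$ is a subarc of a single inaccessible side).

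\textbf{Calculation and conclusion.} Writing $c = c_{\mathrm{cen}} + c_{\mathrm{out}}$ and using $\chi(\cl{K}) = 1$ for every component, an iterated Mayer--Vietoris expansion gives
\[
\chi(G) \;=\; \chi(H_n) + c - \sum_i |\partial K_i|,
\]
and then $\chi(G) = 1 - r(G)$ together with $\chi(H_n) \le 1$ yields $\sum |\partial K_i| \le c + r(G)$. Splitting the sum by component type --- $|\partial K| = 2$ for each outer component, $|\partial K|$ equal to the number of arms for each central one, summing to $\mathrm{arms}$ in total --- this rearranges to $\mathrm{arms} + c_{\mathrm{out}} \le c_{\mathrm{cen}} + r(G)$. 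Every inaccessible side is either a central arm (counted by $\mathrm{arms}$) or is an inaccessible side reached by no central component, in which case it must host at least one outer component; distinct inaccessible sides being pairwise disjoint, their total number is at most $c_{\mathrm{out}}$. Combining, $k \le \mathrm{arms} + c_{\mathrm{out}} \le c_{\mathrm{cen}} + r(G) \le c + r(G) \le \disc(G) - 1 + r(G) = \nadler(G) - 1$, which is what was wanted. The main obstacle I anticipate is justifying the classification of central components: I must show that whenever $p \notin H_n$ the central component at $p$ extends into \emph{every} inaccessible side of $p$, so that the dichotomy ``central arm versus outer-only'' is exhaustive. This is where closedness of $H_n$ enters, together with the observation that a full collar of $H_n$ in any side of $p$ would force $p \in H_n$.
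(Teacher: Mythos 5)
Your argument is correct, but it takes a genuinely different route from the paper's. The paper's proof is a three-line injection: choosing $n$ so large that $f^n(I_U)$ contains all accessible sides of inaccessible points and the complement of a small neighbourhood of $\Nacc(f)$, each inaccessible side must contain an endpoint of the subgraph $f^n(I_U)$, distinct sides giving distinct endpoints, and the bound follows from the fact, quoted from Nadler, that a subgraph of $G$ has fewer than $\nadler(G)$ endpoints. You instead bound the number of inaccessible sides by $\mathrm{arms}+c_{\mathrm{out}}$ and control that quantity via the identity $\chi(G)=\chi(H_n)+c-\sum_i|\partial K_i|$ together with Nadler's component bound $c<\disc(G)$ --- the same Lemma 4.2 of Nadler that the paper already invokes in the proof of Theorem~\ref{thm:nacc}\eqref{con:nacc2}. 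In effect you re-derive the endpoint bound for subgraphs rather than citing it; this buys a self-contained argument in which the appearance of $\chi(G)$ in the formula $\nadler(G)=\disc(G)-\chi(G)+1$ becomes completely transparent, at the price of more bookkeeping. Two details deserve comment, neither of which threatens the proof. First, the ``main obstacle'' you flag at the end is a non-issue: you never need a central component to reach into \emph{every} inaccessible side of $p$, since a component of $G\setminus H_n$ meeting a side $S_p$ either contains $p$ (and then contributes an arm in $S_p$) or lies entirely inside $S_p$ (and is outer), so your dichotomy is automatically exhaustive. Second, to justify $|\partial K|=2$ for outer components you should observe that a side, being homeomorphic to $(0,1)$, contains no endpoint of $G$, so both ends of an outer component do land in $H_n$; and the Euler-characteristic identity should be computed with a triangulation fine enough that $H_n$ and each $\cl{K_i}$ are subcomplexes, which handles correctly the case of boundary points shared by several components.
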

\begin{proof}There is $n$ such that all accessible sides and
$G\setminus\ball(\Nacc(f),\eps_c)$ are contained in $f^n(I_U)$.
It follows that every inaccessible side contains exactly one
endpoint of the subgraph $f^n(I_U)$. To finish the proof, we
conclude from \cite{Nadler} that a subgraph of $G$ has less
than $\nadler(G)$ endpoints.
\end{proof}
\begin{lemma}\label{lem:pre-img-side}
Let $p\in G$. For every side $S_p$ there is a point $q\in G$ and a side
$S_q$ such that $f(q)=p$ and $f(S_q)\cap S_p\neq \emptyset$.
\end{lemma}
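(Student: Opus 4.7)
The plan is to realize $q$ as the limit of preimages of a sequence in $S_p$ that converges to $p$, and then use the finiteness of the number of sides at $q$ to produce the required side $S_q$.

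First I would note that $f$ is surjective. Indeed, if $f(G) \subsetneq G$, then $G \setminus f(G)$ would contain a nonempty open set $V$ disjoint from $f^n(U)$ for every $n \geq 1$ and every nonempty open $U$, contradicting mixing. Hence $f(G) = G$, and every point of $G$ has at least one preimage.

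Next, pick a sequence $x_n \in S_p$ with $x_n \to p$ (possible since $p \in \overline{S_p}$, as $S_p$ is a connected component of $U_p \setminus \{p\}$ with $p$ on its frontier). By surjectivity choose $y_n \in G$ with $f(y_n) = x_n$. Compactness of $G$ provides a convergent subsequence $y_{n_k} \to q$, and continuity of $f$ forces $f(q) = \lim_k f(y_{n_k}) = \lim_k x_{n_k} = p$. Since $x_{n_k} \in S_p$ and $p \notin S_p$, we have $x_{n_k} \neq p = f(q)$, hence $y_{n_k} \neq q$ for every $k$.

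Finally, for $k$ sufficiently large $y_{n_k}$ lies in the canonical neighborhood $U_q$, hence in $U_q \setminus \{q\}$, which has only finitely many connected components (the sides of $q$). By the pigeonhole principle, some side $S_q$ contains $y_{n_k}$ for infinitely many $k$; for each such $k$ we have $x_{n_k} = f(y_{n_k}) \in f(S_q) \cap S_p$, so $f(S_q) \cap S_p \neq \emptyset$, which is the desired conclusion. The argument is essentially a compactness-plus-pigeonhole exercise; the only point requiring any care is ensuring $y_{n_k} \neq q$ (so that $y_{n_k}$ genuinely lies in a side and not at the base point), and this is forced by the choice $x_n \in S_p$ together with $p \notin S_p$.
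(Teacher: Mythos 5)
Your proof is correct and follows essentially the same route as the paper: choose a sequence in $S_p$ converging to $p$, lift it through the surjection $f$, pass to a convergent subsequence of preimages, and use the finiteness of the sides at the limit point $q$ to locate a side $S_q$ meeting $f^{-1}(S_p)$. The paper compresses the surjectivity remark and the pigeonhole step into single clauses, but the argument is identical.
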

\begin{proof}Let us choose an infinite sequence $\{y_n\}\subset S_p$ converging to $p$.
Since $f$ is mixing, hence surjective, there is an infinite sequence $\{x_n\}$ such that
$f(x_n)=y_n$ for all $n$. Passing to a subsequence if necessary, we may assume that
$x_n$ converges to some $q\in G$, and there is a side $S_q$ such that $\{x_n\}\subset S_q$.
By continuity $f(q)=p$, and clearly $f(S_q)\cap S_p\neq \emptyset$ as demanded.
\end{proof}

\begin{lemma}\label{lem:two-sides}
If $q\in G$ and $S_q$ is a side such that $f(S_q)$ intersect at least two sides
of $p=f(q)$, then every side $S_p$ of $p$ such that $f(S_q)\cap S_p\neq \emptyset$
is accessible.
\end{lemma}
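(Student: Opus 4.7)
The plan is to exploit the tree structure of the canonical neighborhood $U_p$ together with Theorem~\ref{thm:nacc}\eqref{con:nacc1}, which tells us that large iterates $f^n(I_U)$ absorb everything outside an arbitrarily small neighborhood of $\Nacc(f)$.

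First, by the choice of $\eps_c$ and of $U_q$, we have $f(S_q)\subset U_p$, a connected subset of the tree $U_p$. Since $U_p\setminus\{p\}$ is the disjoint union of the sides of $p$ and, by hypothesis, $f(S_q)$ meets at least two of them, connectedness forces $p\in f(S_q)$; pick $y^*\in S_q$ with $f(y^*)=p$. Now fix any side $S_p$ with $f(S_q)\cap S_p\neq\emptyset$ and choose $y_1\in S_q$ with $z_1:=f(y_1)\in S_p$. Using Convention~\textbf{(C)}, let $J_1\subset\overline{S_q}$ be the compact sub-arc joining $y_1$ and $y^*$. Its endpoints lie in $S_q$, so $J_1\subset S_q$ is a free arc of $G$, and its image $f(J_1)\subset U_p$ is connected and contains both $p$ and $z_1$. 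Since $U_p$ is a tree, $f(J_1)$ must contain $\langle p,z_1\rangle$, the sub-arc of $\overline{S_p}$ from $p$ to $z_1$.

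Because $\Nacc(f)$ consists of vertices and canonical neighborhoods are chosen small enough that $U_q$ contains no vertex of $G$ other than possibly $q$ itself, the side $S_q\subset U_q\setminus\{q\}$, and hence the compact arc $J_1$, is disjoint from $\Nacc(f)$. Theorem~\ref{thm:nacc}\eqref{con:nacc1} then gives $J_1\subset\interior f^n(I_U)$ for all sufficiently large $n$, whence
\[
\langle p,z_1\rangle\subset f(J_1)\subset f^{n+1}(I_U).
\]
Let $s$ denote the endpoint of $\overline{S_p}$ opposite to $p$; by the choice of $U_p$ we may assume $s$ is not a vertex of $G$, so $s\notin\Nacc(f)$. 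Thus the complementary sub-arc of $\overline{S_p}$ from $z_1$ to $s$ is compact and disjoint from $\Nacc(f)$, and another application of Theorem~\ref{thm:nacc}\eqref{con:nacc1} places it in $\interior f^N(I_U)$ for all large $N$. Combining both containments for $N$ sufficiently large yields $\overline{S_p}\subset f^N(I_U)$, which is the accessibility of $S_p$.

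The main obstacle I expect is that the endpoint $p$ of $\overline{S_p}$ may itself belong to $\Nacc(f)$, in which case $p$ can never lie in $\interior f^N(I_U)$, so the monotonicity of interiors given by Lemma~\ref{lem:selfcover}\eqref{con:self-covering} is insufficient to capture $p$ in any later iterate. The argument above sidesteps this by obtaining $p=f(y^*)\in f(J_1)\subset f^{n+1}(I_U)$ directly from the inclusion $J_1\subset\interior f^n(I_U)$, so $p$ is allowed to sit on the boundary of $f^{n+1}(I_U)$ while still belonging to it.
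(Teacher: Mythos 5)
Your proof is correct and follows essentially the same route as the paper's: connectedness of $f(S_q)$ inside the tree-like star around $p$ forces a point $q_0\in S_q$ with $f(q_0)=p$, and then Theorem~\ref{thm:nacc}\eqref{con:nacc1} applied to the arc joining $y_1$ to $q_0$ in $S_q$ (which avoids $\Nacc(f)$) and to the remainder of $S_p$ yields $S_p\subset f^N(I_U)$, with the image of that arc supplying the ray at $p$ exactly as in the paper. The only cosmetic differences are that you invoke Theorem~\ref{thm:nacc}\eqref{con:nacc1} twice where the paper uses a single $\delta=\tfrac12\min\{\delta_0,\delta_1\}$, and that $f(S_q)$ is only guaranteed to lie in the star $\ball(p,\eps_0)$ rather than in $U_p$ itself, which does not affect the argument.
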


\begin{proof}Observe that if $x,y\in S_q$ are such that $f(x)$ and $f(y)$ belongs to
different sides of $p$ then there is a path in $f(S_q)$ joining $f(x)$ with $f(y)$.
Since $f(S_q)$ is uniquely arcwise connected  this path must contain $p$. Then there
must be a point $q_0\in S_q$ such that $f(q_0)=p$. Let $S_p$ be a side such that
$f(S_q)\cap S_p\neq\emptyset$. We can choose a point $z\in S_q$ such that
$f(z)\in S_p$. Then $f(\langle z,q_0\rangle)$ contains a ray $R(S_p,\delta_0)$ for
some $\delta_0>0$. Clearly, $\langle z,q_0\rangle\subset G\setminus\ball(\Nacc(f),\delta_1)$
for some $\delta_1>0$. Let $\delta=1/2\cdot\min\{\delta_0,\delta_1\}$. By
Theorem~\ref{thm:nacc}\eqref{con:nacc1} there is an integer $N$ such that
$f^n(I_U)\supset G\setminus\ball(\Nacc(f),\delta)$ for all $n\ge N$. In particular,
$f^{N+1}(I_U)$ contains $S_p$.
\end{proof}

\begin{lemma}
\label{lem:acc-side-img}
If $q\in G$ and $S_q$ is an accessible side then every side $S_p$ of $p=f(q)$  such that
$f(S_q)\cap S_p\neq\emptyset$ is accessible.
\end{lemma}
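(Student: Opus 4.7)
The plan is to split according to how $f(S_q)$ sits near $p$. If $f(S_q)$ intersects at least two sides of $p$, Lemma~\ref{lem:two-sides} immediately shows that every side of $p$ met by $f(S_q)$, in particular $S_p$, is accessible, so only the case when $S_p$ is the unique side of $p$ met by $f(S_q)$ needs a genuine argument.

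In that case I would exploit accessibility of $S_q$ as follows. Pick $n$ with $S_q\subset f^n(I_U)$; by compactness of $f^n(I_U)$, also $\cl{S_q}\subset f^n(I_U)$. Choose $z\in S_q$ with $f(z)\in S_p$ and consider the arc $A=[z,q]\subset\cl{S_q}$. Let $q_1\in A$ be the point of $A$ closest to $z$ at which $f(q_1)=p$; it exists because $f^{-1}(p)\cap A$ is nonempty (it contains $q$) and closed in $A$. By continuity of $f$ at $q_1$, one can pick a short sub-arc $A'\subset A\cap S_q$ lying between $q_1$ and $z$ and having $q_1$ in its closure with $f(A')\subset U_p$; maximality of $q_1$ excludes the value $p$ from $f(A')$, so $f(A')\subset U_p\setminus\{p\}$, and the single-side assumption together with connectedness of $f(A')$ forces $f(A')\subset S_p$. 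Since $f(A')$ is connected in the arc $S_p$ and $f(t)\to p$ as $t$ approaches $q_1$ from within $A'$, $f(A')$ is a subinterval of $S_p$ with $p$ in its closure, hence it contains a ray $R(S_p,\delta')$ for some $\delta'>0$.

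Because $A'\subset\cl{S_q}\subset f^n(I_U)$, we get $R(S_p,\delta')\subset f^{n+1}(I_U)$, and since $R(S_p,\delta')$ is open in $G$ also $R(S_p,\delta')\subset\interior f^{n+1}(I_U)$; Lemma~\ref{lem:selfcover}\eqref{con:self-covering} then promotes this to $R(S_p,\delta')\subset\interior f^m(I_U)$ for every $m\ge n+1+N_U$. I would then pick $\eta\in(0,\delta')$ small enough that $\ball(\Nacc(f),\eta)\cap\cl{S_p}\subset\ball(p,\delta')$, and apply Theorem~\ref{thm:nacc}\eqref{con:nacc1} to produce $K$ with $S_p\setminus R(S_p,\delta')\subset G\setminus\ball(\Nacc(f),\eta)\subset\interior f^m(I_U)$ for every $m\ge K$. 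Writing $S_p=R(S_p,\delta')\cup(S_p\setminus R(S_p,\delta'))$ and taking any $m\ge\max(K,n+1+N_U)$ gives $S_p\subset f^m(I_U)$, i.e.\ $S_p$ is accessible.

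The chief obstacle is showing that $f(A')$ really contains a ray of the form $R(S_p,\delta')$. This requires the single-side hypothesis (preventing $f(A')$ from entering a different side of $p$), the maximality of $q_1$ (keeping $p$ itself out of $f(A')$), and continuity at $q_1$ (forcing the image to accumulate at $p$) to all cooperate; if any of them failed, the image in $S_p$ could miss a neighborhood of $p$ and no ray would emerge to seed the self-covering step.
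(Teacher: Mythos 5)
Your proof is correct and follows essentially the same route as the paper: reduce to the single-side case via Lemma~\ref{lem:two-sides}, extract a ray $R(S_p,\delta')$ from the image of a small arc in $S_q$ accumulating at a preimage of $p$, and then absorb the rest of $S_p$ into a high image of $I_U$ using Theorem~\ref{thm:nacc}\eqref{con:nacc1}. The paper's own argument is only a two-line sketch (``$f(S_q)$ contains a ray in the direction $S_p$; proceed as in the proof of Lemma~\ref{lem:two-sides}''), so your extra care --- in particular pushing the ray into $f^{n+1}(I_U)$ via $\cl{S_q}\subset f^n(I_U)$ rather than via an arc avoiding $\Nacc(f)$, which is precisely where the accessibility hypothesis must enter because your arc $A'$ may accumulate at a point $q_1=q$ that is itself inaccessible --- is a welcome filling-in of detail rather than a different method.
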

\begin{proof}
By Lemma~\ref{lem:two-sides} it is sufficient to consider only the
case when $f(S_q)$ intersect only one side $S_p$. Then $f(S_q)$ contain
a ray in the direction $S_{p}$ and we may proceed as in the proof of Lemma~\ref{lem:two-sides}.
\end{proof}

Let $\NaccSides$ denote the set of all inaccessible sides of points in $G$.

\begin{theorem}
\label{thm:sides-function}
There is the unique bijection
$f^*\colon\NaccSides\defmapsto\NaccSides$ such that
$f^*(S_p)=S_q$ if and only if
$S_q\in\NaccSides$ is a side of $q=f(p)$
such that $f(S_p)\cap S_q\neq\emptyset$.
Moreover, for every $0<\eps<\eps_{c}$ there is $\delta>0$ such that for every
$S_p\in\NaccSides$ if $S_q=f^*(S_p)$, then $f(R(S_p,\delta))\subset R(S_q,\eps)$.
\end{theorem}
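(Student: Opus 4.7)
The plan is to build $f^*$ as a permutation of the finite set $\NaccSides$ by a pigeonhole argument on the relation defined by the biconditional, and then to derive the ray inclusion by iterating the local confinement $f(S_p)\subset S_q\cup\{q\}$ along the $f^*$-orbit of $S_p$ and confronting this with mixing via the universal arc.

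I consider the relation
\[
R=\{(S_q,S_p)\in\NaccSides\times\NaccSides : f(q)=p,\ f(S_q)\cap S_p\neq\emptyset\}.
\]
Lemma~\ref{lem:pre-img-side} applied to any $S_p\in\NaccSides$ produces a side $S_q$ of some $q$ with $f(q)=p$ and $f(S_q)\cap S_p\neq\emptyset$; the contrapositive of Lemma~\ref{lem:acc-side-img} forces $S_q\in\NaccSides$, so $R$ has at least one pair ending at each $S_p$. Dually, Lemma~\ref{lem:two-sides} says that for any $S_q\in\NaccSides$, $f(S_q)$ meets at most one inaccessible side of $f(q)$, so $R$ has at most one pair starting at each $S_q$. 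Since $\NaccSides$ is finite (Lemma~\ref{lem:nacc-side-counting}), a double count forces $|R|=|\NaccSides|$, which in turn forces each $S_q\in\NaccSides$ to start exactly one pair. Thus $R$ is the graph of a surjection $f^*\colon\NaccSides\to\NaccSides$, bijective by finiteness; uniqueness among maps satisfying the biconditional is then immediate from Lemma~\ref{lem:two-sides}.

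For the ray condition fix $S_p\in\NaccSides$, put $q=f(p)$ and $S_q=f^*(S_p)$. The central local observation is that for every $S\in\NaccSides$ with base $p_S$,
\[
f(S)\subset S_{f^*(S)}\cup\{f(p_S)\};
\]
indeed $S\subset\ball(p_S,\eps_c)$ gives $f(S)\subset U_{f(p_S)}$, and if $f(S)$ met two distinct sides of $f(p_S)$ Lemma~\ref{lem:two-sides} would make both accessible, contradicting $f^*(S)\in\NaccSides$. Uniform continuity supplies $\delta_0>0$ with $f(\ball(p,\delta_0))\subset\ball(q,\eps)$ for every $p\in G$, hence $f(R(S_p,\delta_0))\subset R(S_q,\eps)\cup\{q\}$ for every $S_p\in\NaccSides$. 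What remains is to shrink $\delta_0$ so that the value $q$ is avoided, i.e., to show that $p$ is isolated in $f^{-1}(q)\cap\cl{S_p}$. Suppose not; pick a monotone $x_n\to p$ in $S_p$ with $f(x_n)=q$ and let $A_n\subset S_p$ be the closed sub-arc between $x_{n+1}$ and $x_n$. Iterating the displayed containment along the orbit of $S_p$ gives
\[
f^j(A_n)\subset S_{f^{*j}(S_p)}\cup\{f^j(p)\}\qquad (j\geq 0),
\]
so if $m$ denotes the $f^*$-period of $S_p$ then $f^{km}(A_n)\subset S_p\cup\{p\}\subset\cl{U_p}$ for every $k\geq 1$. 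Now Lemma~\ref{lem:selfcover} produces a universal arc $I_U'$ inside a free arc of $G$ disjoint from $\cl{U_p}$ (such a free arc exists because $\cl{U_p}$ is a small star and $G$ has edges reaching beyond it). By universality of $I_U'$ the non-degenerate free arc $A_n$ $f^j$-covers $I_U'$ for all sufficiently large $j$; taking $j=km$ with $k$ large enough yields $I_U'\subset f^{km}(A_n)\subset\cl{U_p}$, contradicting $I_U'\cap\cl{U_p}=\emptyset$. Hence $p$ is isolated in $f^{-1}(q)\cap\cl{S_p}$, and an appropriately small $\delta$ gives $f(R(S_p,\delta))\subset R(S_q,\eps)$ for this $S_p$. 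Finiteness of $\NaccSides$ then yields a single uniform $\delta$.

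The main obstacle is precisely this last step: purely local continuity only gives $f(R(S_p,\delta))\subset R(S_q,\eps)\cup\{q\}$, and discarding $q$ requires a semi-local argument that couples the permutation structure of $f^*$, which confines iterates of sub-arcs of $S_p$ to the periodic orbit of small stars, with the expansion from mixing supplied by a universal arc chosen outside $\cl{U_p}$.
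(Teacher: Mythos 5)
Your construction of the bijection $f^*$ is correct and is essentially the paper's own argument: the paper defines the ``preimage'' map $g^*$ on $\NaccSides$ (well defined by Lemmas~\ref{lem:pre-img-side} and~\ref{lem:acc-side-img}, injective by Lemma~\ref{lem:two-sides}) and sets $f^*=(g^*)^{-1}$; your double count of the relation $R$ is the same argument in different clothing. You also correctly isolate the one delicate point of the second assertion: uniform continuity alone only yields $f(R(S_p,\delta))\subset R(S_q,\eps)\cup\{q\}$, and the value $q$ has to be excluded. (The paper disposes of this by asserting that $f(S_p)$ ``is a ray in the direction $S_q$'', i.e.\ that $q\notin f(S_p)$, without giving details.)

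However, your mechanism for excluding $q$ rests on a false containment. The ``central local observation'' $f(S)\subset S_{f^*(S)}\cup\{f(p_S)\}$ does not hold: a side $S$ is a set of a \emph{fixed} small radius, and uniform continuity only gives $f(S)\subset \ball(f(p_S),\eps_0)$, i.e.\ $f(S)$ stays in the correct \emph{direction} at $f(p_S)$, not within the side $S_{f^*(S)}$ itself. It typically cannot stay there: already on $[0,1]$ with $0$ an inaccessible fixed point, the proof of Theorem~\ref{thm:3-horseshoe} shows $\max f([0,\eps])>\eps$. Consequently the iterated containment $f^j(A_n)\subset S_{f^{*j}(S_p)}\cup\{f^j(p)\}$ for \emph{all} $j$ is unsupported, and is in fact impossible for any nondegenerate arc $A_n$: by mixing (cf.\ Lemma~\ref{lem:incompressibility}) the diameters of $f^j(A_n)$ are bounded away from zero for large $j$, so the iterates cannot remain in the fixed small star $\cl{U_p}$. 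Your contradiction therefore flows from the unjustified containment, not from the hypothesis you are refuting. The repair is more direct and does not use the permutation structure at all: if $f(x)=q$ for some $x\in S_p$, pick $z\in S_p$ with $f(z)\in S_q$. The compact arc $\langle x,z\rangle\subset S_p$ is bounded away from $\Nacc(f)$ (the only inaccessible point in $\cl{U_p}$ is $p$), so $\langle x,z\rangle\subset \interior f^N(I_U)$ for large $N$ by Theorem~\ref{thm:nacc}\eqref{con:nacc1}. Its image is connected, lies in the single direction at $q$ determined by $S_q$, and contains both $q$ and $f(z)$, hence contains a ray $R(S_q,\eta)$ with $\eta>0$; thus $R(S_q,\eta)\subset f^{N+1}(I_U)$. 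Since $\cl{S_q}\setminus \ball(q,\eta)$ is also bounded away from $\Nacc(f)$, Lemma~\ref{lem:selfcover}\eqref{con:self-covering} places all of $S_q$ inside a single $f^K(I_U)$, making $S_q$ accessible --- contradicting $S_q\in\NaccSides$. This is the same pattern as the proofs of Lemmas~\ref{lem:two-sides} and~\ref{lem:acc-side-img}; it gives $q\notin f(S_p)$, after which uniform continuity finishes the proof as you intended.
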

\begin{proof}
Let $S_p\in\NaccSides$. By Lemma~\ref{lem:pre-img-side} there is a point $q\in G$ and
its side $S_q$ such that $f(q)=p$ and $f(S_q)\cap S_p\neq\emptyset$. On account of
Lemma~\ref{lem:acc-side-img}, $S_q$ must be inaccessible. It follows from
Lemma~\ref{lem:char-nacc-side} that $q\in\Nacc(f)$. By the above, we may define a
function  $g^*\colon\NaccSides\defmapsto\NaccSides$ such that if $g^*(S_q)=S_p$, then
$f(S_p)\cap S_q\neq\emptyset$. By Lemma~\ref{lem:two-sides} $g^*$ must be injective,
and since $\NaccSides$ is finite, $g^*$ is a bijection. We define $f^*$ to be the inverse
of $g^*$. Now, if $f^*(S_p)=S_q$, then $f(S_p)$ is a ray in the
direction $S_q$. Moreover, $S_q$ must be unique. Now, standard
application of uniform continuity finishes the proof.
\end{proof}

\begin{corollary}\label{cor:sides-function}
\begin{enumerate}
\item If $f^*$ is a function as above, then the set $\NaccSides$ consists of
periodic orbits of $f^*$.\label{con:periodic-sides}
\item For every $0<\eps<\eps_{c}$ there is $\delta>0$ such that for every side
$S_p\in\NaccSides$  there is $1\le m <\nadler(G)$  such that $f^m(p)=p$ and
$f^m(R(S_p,\delta))\subset R(S_p,\eps)$.\label{con:periodic-rays}
\item For every $0<\eps<\eps_{c}$ there is $\delta>0$ such that for every
accessible side $S_p$ of some $p\in G$ we have
\[
f(R(S_p,\delta))\subset \ball(f(p),\eps)\setminus\bigcup_{(f(p),S)\in\NaccSides} R(S,\eps).
\] \label{con:accessible-rays}
\end{enumerate}
\end{corollary}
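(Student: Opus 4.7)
The plan is to derive all three parts from Theorem~\ref{thm:sides-function} together with the structural facts about $\Nacc(f)$ and $\NaccSides$ already established.

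For part~\eqref{con:periodic-sides}, I simply note that $f^*$ is a bijection of the finite set $\NaccSides$ (finiteness by Lemma~\ref{lem:nacc-side-counting}). Any bijection of a finite set decomposes into cycles, so every element of $\NaccSides$ is periodic under $f^*$ with period at most $|\NaccSides|<\nadler(G)$.

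For part~\eqref{con:periodic-rays}, I iterate the ``moreover'' clause of Theorem~\ref{thm:sides-function} along the $f^*$-orbits. Setting $\eps_0=\eps$, and using the theorem (whose quantifier is uniform over $\NaccSides$), I inductively pick radii $\eps_{i+1}\in(0,\eps_i)$ such that $f(R(S,\eps_{i+1}))\subset R(f^*(S),\eps_i)$ holds for every $S\in\NaccSides$ simultaneously. Taking $N=|\NaccSides|$ and $\delta=\eps_N$, for any $S_p\in\NaccSides$ with $f^*$-period $m\le N<\nadler(G)$ I chain these inclusions $m$ times to obtain
\[
f^m(R(S_p,\delta))\subset R((f^*)^m(S_p),\eps_0)=R(S_p,\eps);
\]
the equality $(f^*)^m(S_p)=S_p$ forces $f^m(p)=p$ on base points. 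The only subtle step is the uniformity of $\delta$, which works precisely because the finiteness of $\NaccSides$ allows each $\eps_i$ to be chosen independently of the particular side.

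For part~\eqref{con:accessible-rays}, no delicate ray estimate is needed. The contrapositive of Lemma~\ref{lem:acc-side-img} says that if $S_p$ is an accessible side and $S$ is an inaccessible side of $q=f(p)$, then $f(S_p)\cap S=\emptyset$. Since $R(S,\eps)\subset S$, this immediately gives $f(R(S_p,\delta))\cap R(S,\eps)=\emptyset$ for every $\delta>0$, so it suffices to invoke uniform continuity of $f$ to pick $\delta>0$ such that $f(\ball(p,\delta))\subset \ball(f(p),\eps)$ for every $p\in G$. I do not anticipate any real obstacle beyond bookkeeping the two radii simultaneously in part~\eqref{con:periodic-rays}.
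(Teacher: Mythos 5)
Your proposal is correct and fills in exactly the argument the paper compresses into its one-line proof: finiteness of $\NaccSides$ (Lemma~\ref{lem:nacc-side-counting}) gives periodicity of the bijection $f^*$, iterating the uniform ray-contraction clause of Theorem~\ref{thm:sides-function} along an $f^*$-cycle gives part~\eqref{con:periodic-rays}, and the contrapositive of Lemma~\ref{lem:acc-side-img} together with uniform continuity gives part~\eqref{con:accessible-rays}. No gaps; the nested choice of radii $\eps_N<\dots<\eps_0$ is the right way to handle the uniformity the paper leaves implicit.
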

\begin{proof}
Both parts follows from uniform continuity of $f$ and Lemma~\ref{lem:nacc-side-counting}
and~Theorem~\ref{thm:sides-function}
\end{proof}

%

\section{Structure theorem for pure mixing graph maps}

There is a natural way to provide examples of pure mixing map of a circle:
(1) Start with an interval map such that, either both endpoints are fixed
and at least one of them is inaccessible, or both endpoints are inaccessible
and form a single cycle of length two. (2) Identify the endpoints of the
interval to obtain a circle. After the identification we still have a well
defined map, with the same number of inaccessible sides as at the beginning.
As it was noted in \cite{CovenMulvey} and elaborated in \cite{HK} all pure
mixing maps of the circle may be regarded as a result of applying this
procedure to some pure mixing interval map. We will extend this result to
a pure mixing map of an arbitrary graph.

From the previous section we see that for a mixing graph map $f$ and a point $x\in G$
either the set of all preimages of $x$ is dense in $G$, or $f^{-1}(x)=\{y\}$ where
$x,y\in\Nacc(f)$, and all sides of points lying on the (finite) orbit of $x$ are
inaccessible.
Moreover, given a pure mixing graph map $f$ as above, we may construct a new graph
$G'$ by detaching inaccessible sides from points of $\Nacc(f)$ (we keep the space
compact by adding some additional points). Since inaccessible sides are mapped onto
inaccessible sides in a one-to-one way, the map $f$ lifts in a natural way to
a new map $g\colon G'\defmapsto G'$. Any inaccessible point for $g$ is an endpoint
of $G'$ having now only finite number of inaccessible preimages.

The proof is only a formalization of the procedure described above.

\begin{theorem}[Structure Theorem]\label{thm:structure}
Let $f\colon G\defmapsto G$ be a pure mixing graph map. Then there exist a graph $G'$,
a pure mixing map $g\colon G'\defmapsto G'$, and a continuous surjection
$\pi\colon G'\defmapsto G$ such that:
\begin{enumerate}
\item The map $f$ is factor of $g$ via $\pi$, that is $\pi \circ g = f \circ \pi$.
Moreover, $\pi$ is one-to-one on $G'\setminus \Nacc(g)$ and $\pi(\Nacc(g))=\Nacc(f)$.
    \label{thm:structure:c1}
\item If $e\in \Nacc(g)$ then $e$ is an endpoint of $G'$ and $g^{-1}(e)=\{e'\}$ for
some $e'\in \Nacc(f)$. Moreover, $\Nacc(g)$ has less then $\nadler(G)$ elements.
    \label{thm:structure:c2}
\end{enumerate}
\end{theorem}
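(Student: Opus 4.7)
The plan is to formalize the detachment procedure described in the paragraph preceding the theorem. I would fix a triangulation $\Lom$ of $G$ in which each $p\in\Nacc(f)$ is a vertex, each edge incident to such a vertex lies in exactly one side of $p$, and $\St(p)$ sits inside a canonical neighborhood of $p$. I then build $G'$ by modifying $G$ only near $\Nacc(f)$: for each $p\in\Nacc(f)$ I keep a single copy of $p$ attached exclusively to its accessible edges (provided $p$ has at least one accessible side), and for every inaccessible side $S_p$ I introduce a new endpoint $e_{S_p}$, cutting the edge of $p$ contained in $S_p$ so that its loose end closes up on $e_{S_p}$. The resulting finite 1-dim simplicial complex is $G'$, and I define $\pi\colon G'\to G$ to collapse each $e_{S_p}$ to $p$ and to be the identity elsewhere. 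This $\pi$ is a continuous surjection, one-to-one off the finite set $E$ of new endpoints.

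Next I define $g\colon G'\to G'$. Theorem~\ref{thm:nacc}(\ref{con:nacc3}) together with surjectivity of $f$ and finiteness of $\Nacc(f)$ yields $f^{-1}(\Nacc(f))=\Nacc(f)$, so $f$ restricts to a self-map of $G\setminus\Nacc(f)$, which lifts uniquely through $\pi$ to a self-map of $G'\setminus E$. At each new endpoint I set $g(e_{S_p}):=e_{f^*(S_p)}$, using the bijection $f^*\colon\NaccSides\to\NaccSides$ of Theorem~\ref{thm:sides-function}, and at each surviving $p\in\Nacc(f)$ I set $g(p):=f(p)$, which by Lemma~\ref{lem:acc-side-img} lands at a surviving point. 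Continuity at new endpoints follows from the uniform estimate at the end of Theorem~\ref{thm:sides-function}, and continuity at surviving points of $\Nacc(f)$ from Corollary~\ref{cor:sides-function}(\ref{con:accessible-rays}). By construction, $\pi\circ g=f\circ\pi$.

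For property (\ref{thm:structure:c1}) it remains to verify pure mixing of $g$ and the identity $\pi(\Nacc(g))=\Nacc(f)$. Mixing: for any nonempty open $U,V\subseteq G'$, pass to open subsets $\tilde U\subseteq U$ and $\tilde V\subseteq V$ disjoint from $E$, project via $\pi$ to nonempty open sets in $G\setminus\Nacc(f)$, apply mixing of $f$, and lift the intersection back through $\pi$. Non-exactness: any open $U$ disjoint from $E$ has $g^n(U)$ disjoint from $E$ for every $n$, because only elements of $E$ are $g$-mapped into $E$; the same observation shows every new endpoint lies in $\Nacc(g)$. Conversely, using Lemma~\ref{lem:selfcover} I pick a universal arc $I_U$ for $f$ lying in $G\setminus\Nacc(f)$ (producing it inside a small free arc disjoint from $\Nacc(f)$), lift it to a universal arc for $g$, and Lemma~\ref{lem:universal-int-gives-nacc} then forces $\Nacc(g)=E$. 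Property (\ref{thm:structure:c2}) follows at once: each new endpoint is an endpoint of $G'$ by construction, has a unique $g$-preimage (another new endpoint) by bijectivity of $f^*$, and the cardinality bound $|\Nacc(g)|=|\NaccSides|<\nadler(G)$ is Lemma~\ref{lem:nacc-side-counting}.

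The hard part is verifying that the constructed $G'$ is a graph in the sense of the paper, i.e., a connected compact metric space. The cutting procedure could a priori separate $G$ into several components --- think, for instance, of a branching point $p\in\Nacc(f)$ whose removal locally disconnects $G$ and at which every side is inaccessible. Ruling this out requires a global argument using the surjectivity of $f$, the orbit structure of $\Nacc(f)$ given by Theorem~\ref{thm:nacc}(\ref{con:nacc3}), and the fact that the lift $g$ must itself be mixing; the delicate step will be to show that a pure mixing graph map cannot exhibit such a fully inaccessible separating point, so that the detachment leaves $G'$ connected.
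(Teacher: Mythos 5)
Your construction of $G'$, $\pi$ and $g$ is the same detachment procedure the paper formalizes (there as a quotient of the disjoint union of the edges of a triangulation), but two steps of your argument do not hold up. First, the identity $f^{-1}(\Nacc(f))=\Nacc(f)$ is false in general, and it is the load-bearing step in your definition of $g$ on $G'\setminus E$. Theorem~\ref{thm:nacc}\eqref{con:nacc3} gives $f(\Nacc(f))=\Nacc(f)$ and periodicity, hence only the inclusion $\Nacc(f)\subset f^{-1}(\Nacc(f))$. If $p\in\Nacc(f)$ has at least one accessible side $S_p$, then $p\in\cl{S_p}\subset f^{n}(I_U)$ for large $n$, so $p$ has $f^{n}$-preimages inside $I_U$; since a universal arc can be produced inside any free arc, the iterated preimages of $p$ are dense, and in particular $f^{-1}(p)\not\subset\Nacc(f)$. (The singleton-preimage dichotomy quoted before the theorem applies only when \emph{all} sides of $p$ are inaccessible; inaccessible points with accessible sides do occur, e.g.\ on a $\sigma$-graph obtained by gluing the inaccessible endpoint of a pure mixing interval map onto an interior fixed point.) Consequently $f$ does not restrict to a self-map of $G\setminus\Nacc(f)$, and your ``unique lift'' is undefined at regular points $q$ with $f(q)\in\Nacc(f)$: there $\pi^{-1}(f(q))$ has several elements, and one must argue --- as the paper does in a separate case --- that since all sides of $q$ are accessible, Lemma~\ref{lem:acc-side-img} forces $g$ to send $q$ to the \emph{surviving} copy of $f(q)$, and then check continuity at such points.

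Second, you correctly flag that connectedness of $G'$ needs proof (the paper dispatches it with ``easily seen''), but the resolution you propose --- ruling out a fully inaccessible separating point --- is neither necessary nor sufficient. Fully inaccessible points do occur (a pure mixing circle map with a single inaccessible fixed point whose two sides form an $f^*$-cycle) and cause no disconnection; conversely, your criterion would not exclude, say, a figure-eight whose centre has exactly one accessible side and three inaccessible ones, a configuration that would still disconnect $G'$. The argument that actually works is global: take $A=f^{n}(I_U)$ for $n$ large, so that $A$ is a connected subgraph containing every accessible side and all of $G\setminus\ball(\Nacc(f),\eps)$, while (by the proof of Lemma~\ref{lem:nacc-side-counting}) each inaccessible side contains a gap --- a component of $G\setminus A$ --- separating its germ at the base point from the rest of the edge. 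Hence all the connections that $A$ realizes between the components of $G\setminus\ball(\Nacc(f),\eps)$ pass either through points of $G\setminus\Nacc(f)$ or through a star consisting of an inaccessible point together with its accessible sides; all of these survive intact in $G'$, so $G'$ is connected. Without an argument of this kind (and with the lifting step repaired as above) the proof is incomplete.
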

\begin{proof}
Let $\mathcal{L}$ be a triangulation of $G$. By the definition of canonical neighborhood each side $S_p\in\NaccSides$
is contained in exactly one edge of $\mathcal{L}$. Let $X$ denote the disjoint union of
edges of $\mathcal{L}$, so $X$ is homeomorphic with $[0,1]\times\{1,\ldots,l\}$, for
some $l>0$. Any edge of $G$ can be now identified with a component of $X$.

There is a unique equivalence relation $R$ on $X$ such that $G$ is the identification
space (quotient space) $G=X/R$. The relation $R$ is called the incidence relation and
informs us which endpoints are to be attached to form $G$.

Obviously, we can have $xRy$ for $x\neq y$ only if $x,y$ are endpoints of some
components of $X$. We can view equivalence classes with respect to $R$, denoted by
$[x]_R$ as elements of $G$. Moreover, if the class $[x]_R$ has more than one element
then it represents a vertex $v$ of $G$ and consists of $\val (v)$ points of $G$. Let
$\NaccSides$ denote the set of all inaccessible sides of points in $G$.
Then sides from $\NaccSides$ are in a one-to-one correspondence with a subset
of endpoints of components of $X$. Therefore, we may write $\NaccSides\subset X$ by
convenient abuse of notation. We define $\mathcal{R}=X\setminus\NaccSides$, and call
a point $x\in X$ regular if and only if $x\in\mathcal{R}$.

We define a new relation $R'\subset R$ by declaring $xR'y$ if and only if $xRy$ and
either, both $x$ and $y$ are regular, or $x=y$. Clearly, $R'$ is an equivalence
relation, and $[x]_{R'}\subset [x]_R$ for $x\in X$. Moreover, the space $G'=X/R'$ is
easily seen to be homeomorphic with the space obtained by removing from each
inaccessible side for $f$ on $G$ a tiny ray lying on that side. Hence, $G'$ is a graph.

Now, we will define a map $g\colon G' \defmapsto G'$. First, note that if $x$ is a
regular point such that $[x]_R\notin\Nacc(f)$ then $[x]_R=[x]_{R'}$.
And if we denote  $[y]_R=f([x]_R)$ then also $[y]_R=[y]_{R'}$. This is true, because
every side of $[y]_R$ is accessible. In that case, we define $g([x]_{R'})=f([x]_R)=[y]_{R'}$.

If $x=S_v\in\NaccSides$, that is, if $x$ is an endpoint representing some inaccessible
side of a point $v=[x]_R\in G$, then there is a point $y\in X$ representing a side
$f^*(S_v)$, where $f^*$ is a map defined in Theorem~\ref{thm:sides-function}. In particular,
$y\in\NaccSides$. Then we have $[x]_{R'}=\{x\}$, $[y]_{R'}=\{y\}$ and we may
define $g([x]_{R'})=[y]_{R'}$.

It remains to consider the case when $x$ is a regular point, and $[x]_R\in f^{-1}(\Nacc(f))$.
In that case all sides of $[x]_R$ are accessible, hence $v=f([x]_R)$ must have accessible and
inaccessible sides. Therefore $v=[y]_R$ for some regular point $y$. Clearly, if $y'$ is
another regular point such that $v=[y']_R$ then $yR'y'$ by the definition of $R'$. It follows that
we may define $g([x]_{R'})=[y]_{R'}$.

Now continuity easily follows from Theorem~\ref{thm:sides-function} and Corollary~\ref{cor:sides-function}.
Other points are also easy to see.

%
%

\end{proof}



\section{Transitivity and entropy of pure mixing graph maps}

With the structure theorem at hand
we can now study the topological entropy of pure mixing graph maps.
In this (and next sections)
we will utilize the structure theorem and other tools.


To estimate the topological entropy of pure mixing graph maps
we need the notion of a loose horseshoe from \cite{HK}.
Recall that an $s$-\emph{horseshoe} for $f$ is a free
arc $J$ contained in
the domain of $f$, and a collection
$\mathcal{C}=\{A_1,\ldots,A_s\}$ of $s\ge 2$ nonempty compact
subsets of $J$ fulfilling the following three conditions: (a)~each
set $A\in \mathcal{C}$ is an union of finite number of arcs,
(b)~the interiors of the sets from $\mathcal{C}$ are
pairwise disjoint, (c)~$J\subset f(A)$ for every $A\in\mathcal{C}$.
If the union of elements of $\mathcal{C}$
is a proper subset of $J$, or $J$ is a~proper subset of $f(A)$ for
some $A\in\mathcal{C}$ then we say that a horseshoe
$(J,\mathcal{C})$ is \emph{loose}.
The following lemma is adapted from \cite{HK} and summarizes
results of  \cite[Section 4.2]{HK}. It is easy to see
that the assumption that the graph is an interval $[0,1]$ or
a circle was inessential there, and the result holds for
arbitrary graph.

\begin{lemma}[\cite{HK}] \label{prop:strong_horseshoe_gives_entropy}
If a transitive graph map $f$ has a loose $s$-horseshoe then \mbox{$h(f)>\log s$}.
\end{lemma}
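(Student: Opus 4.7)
The plan is to leverage the looseness together with transitivity to upgrade the $s$-horseshoe into a genuine $(s^N+1)$-horseshoe for some iterate $f^N$. Since any $t$-horseshoe forces $h\ge\log t$ via the standard itinerary construction (pulling back along the covering relation gives a Cantor set on which $f^k$ is semiconjugate to the full shift on $t$ symbols), producing such an upgrade yields
\[
h(f)=\frac{h(f^N)}{N}\ge\frac{\log(s^N+1)}{N}>\log s,
\]
which is the desired strict inequality.

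The construction of the upgraded horseshoe proceeds in two steps. First, iterating the covering relation $f(A_i)\supset J$ by repeated application of Lemma~\ref{lem:covering}\eqref{lem:covering:Al3} supplies, for every $k\ge 1$, a family $\mathcal{C}_k$ of $s^k$ free subarcs of $J$ with pairwise disjoint interiors, each $f^k$-covering $J$ and all lying inside $\bigcup_i A_i$. Second, one manufactures one extra branch from the looseness. If $\bigcup A_i\subsetneq J$, pick a free subarc $I\subset J\setminus\bigcup A_i$. Transitivity of $f$, combined with Lemma~\ref{lem:covering}, produces an integer $M$ for which $I$ $f^M$-covers $J$ (a transitive graph map eventually spreads any nondegenerate free arc over any other free arc; when $f$ is not totally transitive one works inside the appropriate piece of the decomposition supplied by Theorem~\ref{thm:trans-not-tot-trans}). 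Alternatively, if $f(A)\supsetneq J$ for some $A\in\mathcal{C}$, choose a free subarc $E\subset f(A)\setminus J$ meeting $J$ in at most one endpoint; by the same principle $E$ eventually $f^M$-covers $J$, and by Lemma~\ref{lem:covering}\eqref{lem:covering:Al5} a free subarc $B\subset A$ with $f(B)=E$ furnishes an extra branch with $f^{M+1}(B)\supset J$. Taking $N\ge M+1$ large enough that the branches in $\mathcal{C}_N$ have diameter much smaller than the gap separating $I$ (resp.\ $B$) from them, one obtains the target $(s^N+1)$-horseshoe for $f^N$.

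The main obstacle is the bookkeeping needed to certify that the extra branch is a free subarc of $J$ whose interior is genuinely disjoint from every member of $\mathcal{C}_N$. In the first case this is immediate, since the canonical branches lie in $\bigcup A_i$ while $I$ lies in its complement. The second case is more delicate: the extra image $E$ lives outside $J$, so the new branch must be manufactured \emph{inside} $J$ as a preimage subarc of $A$, and the canonical branches in $\mathcal{C}_N$ can be spread throughout $A$. For large $N$, however, those canonical branches have diameters tending to $0$, so a sufficiently small subarc of $A$ can be chosen to lie in a gap between them; shrinking $E$ correspondingly and iterating once more to recover the covering of $J$ completes the disjointness argument. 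This is precisely the content of \cite[Section~4.2]{HK}, and it carries over from the interval/circle setting to arbitrary graphs without any change, since none of the steps used that the ambient space was a single free arc or a circle.
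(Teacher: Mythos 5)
The paper does not actually prove this lemma: it is imported from \cite{HK} with only the remark that the arguments of Section~4.2 there never use that the ambient space is an interval or a circle. So the only thing to compare against is that remark, and your overall strategy --- upgrade the loose $s$-horseshoe to a horseshoe with strictly more than $s^N$ branches for some iterate $f^N$ and divide by $N$ --- is indeed the strategy of the cited source. However, two of your concrete steps fail as stated, so the sketch does not yet constitute a proof.

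First, the assertion that transitivity alone yields an $M$ with $f^M(I)\supset J$ is false. A transitive graph map which is not totally transitive carries the cyclic decomposition of Theorem~\ref{thm:trans-not-tot-trans}, and the base arc $J$ of a (loose) horseshoe can straddle two pieces $G_0,G_1$: take a transitive, non-mixing interval map swapping the halves $[-1,0]$ and $[0,1]$, with a $2$-horseshoe on $J=[-1,1]$ whose branches each meet both halves. Then every iterate of any set contained in the interior of one piece stays in a single piece and can never contain $J$. Your parenthetical ``work inside the appropriate piece'' does not repair this, because the obstruction concerns $J$, not $I$: one must first replace $(J,\mathcal{C})$ by a loose horseshoe for some $f^k$ whose base arc lies inside a single piece (where $f^k$ is mixing and the covering claim is available via Lemma~\ref{lem:selfcover}), and only then route the slack back. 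Second, in the overshoot case (where $J$ is a proper subset of $f(A)$) the disjointness bookkeeping breaks down: the union of the members of $\mathcal{C}_N$ can have dense interior in $A$ for every $N$ (for instance when $A_1\cup\dots\cup A_s=J$ and every point of $J$ admits an itinerary), so no subarc $B\subset A$ has interior disjoint from all members of $\mathcal{C}_N$; moreover, disjointness of interiors only forces \emph{most} members of $\mathcal{C}_N$ to be short, not all of them. The extra branch here has to be accounted for combinatorially --- as an extra loop of length $M+1$ through $E\subset f(A)\setminus J$ in the transition graph, which pushes the growth rate to the root $\lambda>s$ of $\lambda^{M+1}=s\lambda^{M}+1$ --- rather than by carving a geometrically disjoint subarc out of $J$. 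A smaller point in the same direction: the $A_i$ are finite unions of arcs, so $J\subset f(A_i)$ need not produce a single free subarc of $A_i$ mapping onto $J$, and the families $\mathcal{C}_k$ of free arcs need not exist as you describe them; this is precisely why the definition of a horseshoe allows branches that are finite unions of arcs.
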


The next two theorems provide a lower bounds of topological entropy of pure mixing
graph map. The first of these facts comes from
\cite[Proposition 4.2]{ABLM} for the tree maps and with the weak inequality.
Later Baldwin in \cite{Baldwin} observed that the inequality is in fact strict.
Here we present a variant for that result which is valid for graph maps.

\begin{theorem}\label{thm:3-horseshoe}
Let $f$ be a transitive map of a graph $G$. If $e$ is an endpoint of
$G$ such that $f^{-1}(e)=\{e\}$, then $e$ is an accumulation point of
fixed points of $f$ and $h(f)> \log 3$.
\end{theorem}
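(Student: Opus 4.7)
The plan is to first reduce to the case where $f$ is mixing, and then extract both conclusions from a single construction: a loose $3$-horseshoe for $f$ inside an arbitrarily small free arc at $e$.

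For the reduction to mixing, I invoke Theorem~\ref{thm:trans-not-tot-trans}. If $f$ were transitive but not totally transitive, then $G$ would decompose cyclically as $G=G_0\cup\cdots\cup G_{k-1}$ with $k\ge 2$ and $f(G_i)=G_{i+1\bmod k}$. Since $f(e)=e$ and $f$ permutes the $G_j$ cyclically, $e$ would belong to every $G_j$. But $\val(e)=1$ in $G$, so a neighborhood of $e$ in $G$ is a single arc, and any subgraph containing the endpoint $e$ must contain a non-degenerate initial segment of that arc. Two distinct pieces $G_i$ and $G_j$ would then share such an initial segment, contradicting the finite-intersection condition (b) of the decomposition. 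Hence $f$ is totally transitive, which coincides with mixing for non-invertible graph maps.

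Next, parameterize a small edge incident to $e$ as $[0,a]$ by convention~\textbf{(C)} with $e=0$. For $\eps\in(0,a)$ set $J_\eps=[0,\eps]$, which is a free arc. The strategy is to show that for every sufficiently small $\eps>0$, $f$ admits a loose $3$-horseshoe $(J_\eps,\{A_1,A_2,A_3\})$ with $A_i\subset J_\eps$. If so, Lemma~\ref{prop:strong_horseshoe_gives_entropy} immediately gives $h(f)>\log 3$. Moreover, since $A_i\subset J_\eps$ and $A_i$ $f$-covers $J_\eps$, Lemma~\ref{lem:covering}\eqref{lem:covering:Al2} implies that $A_i$ $f$-covers itself, and Lemma~\ref{lem:covering}\eqref{lem:covering:Al1} then produces a fixed point of $f$ inside $A_i\subset J_\eps$. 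As $\eps$ is arbitrarily small, $e$ is an accumulation point of fixed points of $f$.

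The construction of the horseshoe proceeds as follows. By mixing, $f^n(J_\eps)$ fills most of $G$ for large $n$; in particular $f^n(J_\eps)\supset J_\eps$, and $f^n(J_\eps)$ intersects every component of the complement of any prescribed finite set of small disjoint free arcs. The rigidity $f^{-n}(e)=\{e\}$ strongly constrains the lap structure of $f^n\vert_{J_\eps}$: a lap that ``returns to $e$'' must do so at the single preimage $0$. Combining this with Lemma~\ref{lem:multiple_covering}, applied to three disjoint free subarcs $C_1,C_2,C_3$ inside a carefully chosen free arc $F$ that $f^n(J_\eps)$ sufficiently fills, one produces three pairwise disjoint subarcs of $J_\eps$ each $f$-covering $J_\eps$, with a strict image-containment in at least one case. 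When $f$ is pure mixing, the structure theorem (Theorem~\ref{thm:structure}) can be used to handle the possibility that $e$ itself is inaccessible: one transfers the problem to the cover $G'$ where $e$ lifts to an honest endpoint with the same preimage condition.

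The main obstacle is producing the horseshoe for $f$ itself rather than for an iterate $f^n$. Iterates admit many folds by mixing and naturally yield a $3^n$-horseshoe for $f^n$, but this only translates to $h(f)\ge\log 3$. The extra leverage needed to secure the strict inequality and to obtain \emph{fixed} points (rather than periodic points of $f^n$) comes from the rigidity $f^{-1}(e)=\{e\}$, which eliminates the lap cancellations allowed by general mixing maps and forces an additional lap beyond $3^n$; this in turn makes the horseshoe loose and, through the self-covering structure, located arbitrarily near $e$. Without this rigidity---e.g.\ for the doubling map on $\mathbb{S}^1$, where $f^{-1}(0)=\{0,1/2\}$---the ``extra'' lap disappears and one obtains only $h(f)\ge\log 2$, which shows that the preimage condition is precisely what drives the constant $\log 3$.
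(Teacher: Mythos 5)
There is a genuine gap: the entire content of the theorem is the actual construction of a loose $3$-horseshoe for $f$ itself, and your proposal never produces it. The two load-bearing assertions --- that mixing ``naturally yields a $3^n$-horseshoe for $f^n$'' and that the rigidity $f^{-1}(e)=\{e\}$ ``forces an additional lap beyond $3^n$'' --- are both unsupported, and the first is false: the full tent map on $[0,1]$ is mixing (even exact) with entropy $\log 2$, so $h(f^n)=n\log 2<\log 3^n$ and $f^n$ admits nothing like $3^n$ disjoint laps. Lemma~\ref{lem:multiple_covering} and the mixing estimates you invoke only yield $f^n$-coverings, so the claimed ``three pairwise disjoint subarcs of $J_\eps$ each $f$-covering $J_\eps$'' does not follow from anything you write, and no mechanism is offered by which the preimage condition converts an iterated covering into three disjoint one-step coverings. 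The appeal to Theorem~\ref{thm:structure} is a red herring ($e$ is already an endpoint of $G$ by hypothesis, and the theorem is stated for general transitive maps), and the reduction to the mixing case, though your argument for it is correct, is unnecessary.

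The paper's proof runs in the opposite order and is purely order-theoretic, using only transitivity. Identifying the edge at $e$ with $[0,1]$, $e=0$, transitivity together with $f^{-1}(e)=\{e\}$ gives, for all small $\eps$: ($\star$) $0<\min f(G\setminus[0,\eps))<\eps$ and ($\star\star$) $\max f([0,\eps])>\eps$. These two facts alone place a point $a$ with $f(a)>a$ below a point $b$ with $f(b)<b$ inside any $[0,\eps]$, hence a fixed point there; so the accumulation of fixed points at $e$ is proved first and directly, not deduced from a horseshoe. With fixed points now abundant near $e$, one chooses suitable consecutive fixed points $s\le p<q\le u<v$ together with $r\in(p,q)$ realizing $\max f([0,v])$ and $w\in(u,v)$ realizing $\min f(G\setminus[0,s))$, and checks that $[p,r]$, $[r,w]$, $[w,v]$ each $f$-cover $[p,v]$, with $f(r)>v$ and $f(w)<s$ making the horseshoe loose; Lemma~\ref{prop:strong_horseshoe_gives_entropy} then gives $h(f)>\log 3$. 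Some explicit construction of this kind (or a genuine substitute for it) is precisely what your write-up is missing.
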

\begin{proof}We identify the edge containing $e$ with the unit
interval $[0,1]$ with $e=0$, and we use the induced ordering $<$
and write about intervals, etc. In addition, we agree to write
$x<y$ for any $x\in [0,1]$ and $y\in G\setminus [0,1]$. We can
find an $0<\eps_0<1$ such that $f([0,\eps_0])\subset [0,1]$.
Observe that for any $0<\eps<\eps_0$ we have:
\begin{description}
\item[($\star$)]\label{con:by_trans1}
If $\eps'=\min f(G\setminus[0,\eps))$, then $0<\eps'<\eps$
(since $\eps'\ge \eps$ would imply that $G\setminus[0,\eps)$
is a proper invariant set with nonempty interior contradicting
transitivity, and $\eps'=0$ would contradict $f^{-1}(e)=\{e\}$).
\item[($\star\star$)]\label{con:by_trans2}
If $\eps''=\max f([0,\eps])$, then $\eps<\eps''$
(as $\eps''\le \eps$ would imply that $[0,\eps]$ is invariant
contradicting transitivity).
\end{description}
We will use ($\star$) and ($\star\star$) all the time without
any reference. To prove that $e$ is an accumulation point of
fixed points of $f$ it is enough to show that for any
$0<\eps<\min f(G\setminus [0,\eps_0))$ we have $f(a)>a$ and
$f(b)<b$ for some $0<a<b<\eps$. To see it let
$\eps'=\min f(G\setminus[0,\eps))$.  Then there is
$b\in [\eps',\eps)$ such that $f(b)<b$. There also must be
$0<a<b$ such that $f(a)=\max f([0,b])>b$.

For the proof that $h(f)> \log 3$,
let $0<a<\min f(G\setminus [0,\eps_0))$ be a fixed point of $f$,
and set $b=\min f(G\setminus [0,a))$. We have $0<b<a$, and since
$e$ is an accumulation point of fixed points of $f$ there are
fixed points $s$ and $t$ of $f$ such that $0<s<b$, $s<t$ and
$\max f([0,b])=f(z)$ for some $s<z\le b$. Moreover, we may assume
that there is no fixed point in $(s,t)$, so $f(x)>x$ for all
$x\in(s,t)$. It follows that $t<a$ and there must be fixed points
$u$ and $v$ of $f$ such that $t\le u<v\le a$ and
$\min f(G\setminus[0,s))=f(w)$ for some $u<w<v$. Again we may assume
that $f(y)<y$ for all $y\in(u,v)$. There
are fixed points $p$ and $q$ such that for some $p<r<q$ we have
$f(r)=\max f([0,v])=\max f([p,q])$. Clearly, we have $s\le p< q\le u$,
since $\max f([0,s])\leq \max f([s,t])$ and $\max f([0,v])\leq \max f([0,u])$.
Therefore, $[p,r]$, $[r,w]$,
$[w,v]$ form a loose $3$-horseshoe for $f$, as $f(r)>v$, and $f(w)<s$.
\end{proof}

\begin{theorem}\label{thm:3-horseshoe-kappa}
If $f$ is a pure mixing map of a graph $G$, then
\begin{enumerate}
\item $h(f)>(1/\nadler(G))\cdot\log 3$,
\item there exists $0<m<\nadler (G)$ such that $f^m$ has
infinitely many fixed points.
\end{enumerate}
\end{theorem}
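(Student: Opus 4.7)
The plan is to apply Theorem~\ref{thm:3-horseshoe} to a suitable iterate of the lift of $f$ provided by the Structure Theorem, and then to push the resulting loose $3$-horseshoe down to $G$ via the factor map. First I invoke Theorem~\ref{thm:structure} to obtain a pure mixing $g\colon G'\defmapsto G'$ and a factor map $\pi\colon G'\defmapsto G$ with $\pi\circ g = f\circ \pi$, such that $\pi$ is injective on $G'\setminus\Nacc(g)$ and $\Nacc(g)$ is a nonempty (by Theorem~\ref{thm:nacc}\eqref{con:nacc2}) set of cardinality less than $\nadler(G)$, whose points are endpoints of $G'$ each having a single $g$-preimage, again in $\Nacc(g)$.

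Fix any $e\in\Nacc(g)$ and let $m$ be its $g$-period. Since $g$ permutes $\Nacc(g)$ (Theorem~\ref{thm:nacc}\eqref{con:nacc3}), we have $1\le m \le |\Nacc(g)| < \nadler(G)$, and a simple induction starting from $g^{-1}(e)\subset \Nacc(g)$ yields $(g^m)^{-1}(e)=\{e\}$. Moreover $g^m$ is transitive, because $g$ is mixing and hence totally transitive. Applying Theorem~\ref{thm:3-horseshoe} to $g^m$ at the endpoint $e$ of $G'$ then produces a sequence of fixed points of $g^m$ accumulating at $e$, and (inspecting the proof of that theorem) a loose $3$-horseshoe for $g^m$ whose arcs lie strictly inside the edge of $G'$ containing $e$, at positive distance from $e$ and from the other endpoint of that edge.

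Both the horseshoe arcs and the accumulating fixed points therefore lie in $G'\setminus\Nacc(g)$, on which $\pi$ is a homeomorphism onto an open subset of $G$. Applying $\pi$ and using $\pi\circ g^m = f^m\circ\pi$ transports the whole configuration to $G$: free arcs remain free arcs, interiors stay pairwise disjoint, and the covering relations (including the proper containment making the horseshoe loose) are preserved, because $f^m(\pi(A_i))=\pi(g^m(A_i))$ for each horseshoe piece $A_i$. This yields a loose $3$-horseshoe for $f^m$ in $G$ together with infinitely many distinct fixed points of $f^m$, which gives claim (2) since $0<m<\nadler(G)$.

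For (1), note that $f^m$ is transitive (as $f$ is mixing), so Lemma~\ref{prop:strong_horseshoe_gives_entropy} applied to the projected horseshoe gives $h(f^m)>\log 3$, whence
\[
h(f)=\frac{h(f^m)}{m} > \frac{\log 3}{m} \ge \frac{\log 3}{\nadler(G)-1} > \frac{\log 3}{\nadler(G)}.
\]
The main obstacle is the projection step: the entropy bound cannot be transferred directly from $g$ to $f$ (as $f$ is a factor, we only have $h(f)\le h(g)$), so the horseshoe itself must be built downstairs in $G$. Doing this forces careful control over the location of the horseshoe constructed via Theorem~\ref{thm:3-horseshoe}, namely that it sits entirely in $G'\setminus\Nacc(g)$ where $\pi$ is injective; luckily, the proof of that theorem places its horseshoe strictly inside the relevant edge, away from $e$, which is exactly what is needed.
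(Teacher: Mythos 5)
Your proof is correct, and for the setup it coincides with the paper's: both pass to the extension $g\colon G'\defmapsto G'$ from Theorem~\ref{thm:structure}, locate an endpoint $e\in\Nacc(g)$ of period $m<\nadler(G)$ with $(g^m)^{-1}(e)=\{e\}$, and feed $g^m$ into Theorem~\ref{thm:3-horseshoe}. Where you diverge is the transfer of the entropy bound from $g^m$ to $f^m$. The paper does this in one line: since $\pi$ is a finite-to-one semiconjugacy, $h(f^m)=h(g^m)>\log 3$ (Bowen's inequality gives equality of entropies for factor maps with fibers of zero entropy, in particular finite fibers). So the ``main obstacle'' you identify is not actually an obstacle --- the inequality $h(f)\le h(g)$ for factors is supplemented by the reverse inequality precisely because $\pi$ is finite-to-one, and this is the fact the paper invokes. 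Your workaround --- pushing the loose $3$-horseshoe of $g^m$ down through $\pi$ --- is nevertheless sound: the horseshoe produced in the proof of Theorem~\ref{thm:3-horseshoe} sits in $[s,a]\subset(0,\eps_0)$, hence in the open part of the edge containing $e$, where $\pi$ is a homeomorphism onto an open subset of $G$ missing $\Nacc(f)$; the covering relations and looseness descend via $f^m\circ\pi=\pi\circ g^m$, and Lemma~\ref{prop:strong_horseshoe_gives_entropy} then applies downstairs. This buys a more elementary, self-contained argument that avoids citing the entropy formula for finite-to-one extensions, at the cost of the (correctly handled) verification that the horseshoe and the accumulating fixed points lie in the injectivity locus of $\pi$. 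As a small bonus, your route even yields the slightly sharper bound $h(f)>\log 3/(\nadler(G)-1)$.
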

\begin{proof}By Theorem~\ref{thm:structure} $f$ is a factor of a map
$g\colon G'\defmapsto G'$ such that $g^{-1}(\Nacc(g))=\Nacc(g)$. Moreover,
$\Nacc(g)$ has less than $\nadler(G)$ elements, so there is
$0<m<\nadler (G)$ such that $g^{-m}(e)=\{e\}$ for some endpoint of
$G'$. By Theorem~\ref{thm:3-horseshoe} we see that $h(g^m)>\log 3$. As
$g^m$ is an extension of $f^m$ via finite-to-one semiconjugacy, we have
$h(f^m)=h(g^m)>\log 3$. Moreover, $g^m$ has infinitely many fixed points,
and so does $f^m$.
\end{proof}

As a direct consequence we get the following lower bound for $\inf(h(\pure{G}))$
for a given graph $G$.

\begin{corollary}\label{cor:inf-pure}
If $G$ is a graph, then $\inf(h(\pure{G}))\ge (1/\nadler(G))\cdot\log 3$.
\end{corollary}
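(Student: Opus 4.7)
The plan is essentially to read off the bound from Theorem~\ref{thm:3-horseshoe-kappa}(1) and take an infimum. More precisely, fix an arbitrary $f\in\pure{G}$. By Theorem~\ref{thm:3-horseshoe-kappa}(1), applied to this $f$ (which is pure mixing by hypothesis), we have the pointwise strict inequality
\[
h(f) > \frac{\log 3}{\nadler(G)}.
\]
This lower bound depends only on the combinatorial invariant $\nadler(G)$ of the ambient graph, and \emph{not} on the particular map $f$ chosen from the class $\pure{G}$.

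Since the inequality holds for every element of the family, I would then pass to the infimum: taking $\inf$ over $f\in\pure{G}$ on both sides preserves the (non-strict) $\ge$ relation, giving
\[
\inf h(\pure{G}) = \inf_{f\in\pure{G}} h(f) \ge \frac{\log 3}{\nadler(G)},
\]
which is exactly the claim. The only subtlety worth mentioning is that the strict inequalities $h(f) > \log 3/\nadler(G)$ at the level of individual maps only guarantee the weak inequality $\ge$ after taking the infimum (a priori the infimum could be attained in the limit), so the corollary states the result with ``$\ge$'' rather than ``$>$''.

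There is no real obstacle here; the heavy lifting was already done in proving Theorem~\ref{thm:3-horseshoe-kappa}, where the Structure Theorem was used to lift $f$ to a map $g$ on $G'$ with a totally inaccessible endpoint and then Theorem~\ref{thm:3-horseshoe} provided the loose $3$-horseshoe giving $h(g^m) > \log 3$ for some $0 < m < \nadler(G)$. The corollary is just the immediate consequence of the uniformity of that bound across $\pure{G}$.
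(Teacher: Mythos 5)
Your proposal is correct and coincides with the paper's own treatment: the paper states this corollary as a ``direct consequence'' of Theorem~\ref{thm:3-horseshoe-kappa}(1) with no further argument, which is exactly your step of applying the uniform pointwise bound $h(f)>\log 3/\nadler(G)$ and passing to the infimum. Your remark that the strict inequality weakens to $\ge$ under the infimum is the right (and only) subtlety to note.
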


As the second corollary we get the following theorem which is contained in
\cite{Yokoi} as Theorem 4.1. Here, $\operatorname{Fix}(f^k)$ denotes the
set of all fixed points of $f^k$.

\begin{theorem}\label{thm:Yokoi-main}
Let $f\colon G \defmapsto G$ be a graph map with
$\#\operatorname{Fix}(f^k)<\infty$ for each $k\geq 1$. If $f$ is
transitive, then it is strongly transitive, that is, for every
non-empty open set $U\subset G$ there exists $n>0$ such that
\[G=U\cup f(U) \cup\ldots \cup f^n(U).\]
\end{theorem}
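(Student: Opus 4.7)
The plan is to combine Theorem~\ref{thm:trans-not-tot-trans} (which splits a transitive graph map into either a totally transitive one or a cyclic union of totally transitive pieces) with Theorem~\ref{thm:3-horseshoe-kappa}, which shows that a pure mixing graph map must produce infinitely many fixed points for some iterate $f^m$ with $0<m<\nadler(G)$. The assumption $\#\Fix(f^k)<\infty$ is exactly what excludes pure mixing, collapsing the analysis to the two easy cases of exact maps and irrational circle rotations, both of which are manifestly strongly transitive.

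First I would treat the totally transitive case. Two sub-cases arise. If $f$ is non-invertible, then $f$ is mixing by the remark following Theorem~\ref{thm:trans-not-tot-trans}; combined with $\#\Fix(f^m)<\infty$ for every $m$, Theorem~\ref{thm:3-horseshoe-kappa}(2) rules out pure mixing, so $f$ must be exact, and for any nonempty open $U$ some $n$ satisfies $f^n(U)=G$, giving $U\cup f(U)\cup\dots\cup f^n(U)=G$ at once. If $f$ is instead a homeomorphism, then by the same remark $G=\cir$ and $f$ is an irrational rotation; strong transitivity then follows from minimality, since the $f^{-1}$-orbit of any $y\in\cir$ is dense and hence meets $U$, so $\bigcup_{j\ge 0} f^j(U)=\cir$, and compactness yields a covering by consecutive iterates.

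Next I would handle the case where $f$ is transitive but not totally transitive. Theorem~\ref{thm:trans-not-tot-trans} supplies a cyclic decomposition $G=G_0\cup\dots\cup G_{k-1}$ with $f(G_i)=G_{(i+1)\mod k}$ and each $f^k|_{G_i}$ totally transitive. The hypothesis passes to the pieces because $\Fix((f^k|_{G_i})^n)\subset\Fix(f^{kn})$ is finite, so the previous paragraph applies and each $f^k|_{G_i}$ is strongly transitive on $G_i$. To assemble, pick a nonempty open $U\subset G$; since only finitely many points of $G$ lie in more than one $G_i$, $U$ contains an open set inside a single $G_{i_0}$, and $V:=U\cap G_{i_0}$ is nonempty open in $G_{i_0}$. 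Strong transitivity of $f^k|_{G_{i_0}}$ yields $N$ with $V\cup f^k(V)\cup\dots\cup f^{Nk}(V)=G_{i_0}$; applying $f,f^2,\dots,f^{k-1}$ shifts this covering to the remaining pieces, and therefore $U\cup f(U)\cup\dots\cup f^{Nk+k-1}(U)=G$.

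The main obstacle I expect is the bookkeeping in the last assembly step: one must verify that $U$ has nonempty relative interior inside some single piece $G_{i_0}$ (using the finiteness of pairwise intersections $G_i\cap G_j\subset\End(G_i)\cap\End(G_j)$), and then check that shifting the cover of $G_{i_0}$ by $f,\dots,f^{k-1}$ really produces a covering of $G$ by consecutive iterates of $U$. Beyond this, the proof is a straightforward deduction from Theorem~\ref{thm:trans-not-tot-trans} and Theorem~\ref{thm:3-horseshoe-kappa}, with no further machinery required.
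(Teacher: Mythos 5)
Your proposal is correct and follows essentially the same route as the paper: reduce via Theorem~\ref{thm:trans-not-tot-trans} to the totally transitive case (treating the irrational rotation separately), use the fact that totally transitive non-invertible graph maps are mixing, and invoke Theorem~\ref{thm:3-horseshoe-kappa} to rule out pure mixing under the finiteness hypothesis on $\Fix(f^k)$. The only difference is that you spell out the reassembly of the cyclic decomposition, which the paper leaves implicit in the phrase ``it is enough to prove that if $f$ is totally transitive then it must be exact.''
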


\begin{proof}
If $f$ is an irrational rotation then the result is well-known.
Assume that $f$ is non-invertible. By Theorem \ref{thm:trans-not-tot-trans}
it is enough to prove that if $f$ is totally transitive then it must be
exact. But it is well known (see \cite{HKO} for a simple proof of that fact)
that every totally transitive, and non-invertible
graph map must be mixing. By Theorem \ref{thm:3-horseshoe-kappa} a mixing graph map with
$\#\operatorname{Fix}(f^k)<\infty$ for each $k\geq 1$ must be exact.
\end{proof}

We recall that a map $f\colon X\defmapsto Y$ is monotone if $X$ and $Y$ are
topological spaces, $f$ is continuous, and for each point $y\in Y$ its
preimage $f^{-1}(y)$ is connected. If
$X$ is a tree and there is a finite set $P\subset X$ such that for each \
connected component $C$ of $X\setminus P$ the map $f|_{\cl{C}}\colon \cl{C}\defmapsto Y$
is monotone, then we say that $f$ is $P$-\emph{monotone}.
We say that a tree map $f$ \emph{piecewise monotone} if $f$ is $P$-monotone with
respect to some finite $P\subset T$. If $f$ is a $P$-monotone map then the
closures of connected components of $T\setminus P$ are called
$P$-\emph{basic intervals} for $f$.

Theorem~\ref{thm:Yokoi-main} is a generalization of the following well-known result (see \cite{Yokoi} for more comments).

\begin{corollary}\label{cor:Yokoi-cor}
Let $f\colon G \defmapsto G$ be a piecewise monotone graph map. If $f$
is transitive then it is strongly transitive, in particular
a totally transitive piecewise monotone graph map must be exact.
\end{corollary}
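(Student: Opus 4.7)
My plan is to derive Corollary~\ref{cor:Yokoi-cor} from Theorem~\ref{thm:Yokoi-main} by verifying its hypothesis, namely $\#\Fix(f^k)<\infty$ for every $k\ge 1$, and then to settle the ``in particular'' clause by invoking Theorem~\ref{thm:3-horseshoe-kappa}. The main obstacle is the finiteness of $\Fix(f^k)$: a map that is monotone in the paper's sense (connected preimages) is permitted to have constant pieces and can a priori exhibit a Cantor-type fixed-point set, so local monotonicity by itself is not enough; transitivity must be used to rule this out through an orbit-of-an-invariant-subinterval argument.

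First I would check, by a routine induction, that for each $k\ge 1$ there is a finite set $P_k\subset G$ with respect to which $f^k$ is $P_k$-monotone, the successive $P_k$ being obtained by pulling back the breakpoints of $f$ through the monotone pieces of previous iterates. In particular, $f^k$ has only finitely many basic intervals, on each of which it is monotone. Suppose now for contradiction that $\Fix(f^k)$ is infinite; then some basic interval $I$ of $f^k$ contains an infinite closed subset $F=\Fix(f^k)\cap I$. If $F$ contains a non-degenerate subarc $A$, then $f^k$ fixes $A$ pointwise, and taking $A$ small enough to lie in the interior of an edge makes $B=\bigcup_{i=0}^{k-1}f^i(A)$ a closed, $f$-invariant, proper subset of $G$ with nonempty interior, contradicting the transitivity of $f$. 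Otherwise $F$ is infinite but contains no arc, so its complement in $I$ consists of countably many gaps of bounded total length, and there exist consecutive points $a<b$ of $F$ (in the arc parametrization of $I$) with $b-a$ arbitrarily small. Local weak monotonicity of $f^k|_I$ near such a pair, together with $f^k(a)=a$ and $f^k(b)=b$, forces $f^k([a,b])\subset[a,b]$, so $[a,b]$ is $f^k$-invariant, and the same orbit argument again contradicts transitivity.

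Once $\#\Fix(f^k)<\infty$ is established for every $k$, Theorem~\ref{thm:Yokoi-main} immediately yields strong transitivity, proving the first assertion. For the ``in particular'' clause, let $f$ be totally transitive; the irrational-rotation case is handled exactly as in the proof of Theorem~\ref{thm:Yokoi-main}, so I may assume $f$ is non-invertible and therefore mixing by the result from \cite{HKO} cited earlier. If $f$ were pure mixing, Theorem~\ref{thm:3-horseshoe-kappa}(2) would give some $0<m<\nadler(G)$ with $\Fix(f^m)$ infinite, contradicting the finiteness just shown. Hence $f$ must be exact.
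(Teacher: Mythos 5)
Your proof is correct and follows essentially the same route as the paper: the paper's entire proof of this corollary is the one-line observation that a transitive piecewise monotone map satisfies the hypothesis $\#\Fix(f^k)<\infty$ of Theorem~\ref{thm:Yokoi-main}, because by transitivity images of nondegenerate continua remain nondegenerate. Your invariant-subinterval argument is simply a detailed justification of that observation, and your separate treatment of the ``in particular'' clause re-runs the corresponding step inside the proof of Theorem~\ref{thm:Yokoi-main}, so nothing genuinely new is being done.
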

\begin{proof}
It is easy to see that every transitive and piecewise monotone map fulfills the assumption of Theorem
\ref{thm:Yokoi-main} (by transitivity, images of nondegenerate continua remain nondegenerate).
\end{proof}

\section{Topological entropy of transitive tree maps}

In this section we collect some technical results which we will use
in the next section. We will also need some additional terminology,
which we recall below.

Let $f\colon T\defmapsto T$ be a tree map. We say that $f$ is \emph{linear} on a
set $S\subset T$ if there is a constant $\alpha$ such that
$d(f(x),f(y))=\alpha d(x,y)$ for all $x,y\in S$ (here, as always $d$
denotes the taxicab metric on $T$). If a $P$-monotone map $f$ is linear on
every $P$-basic interval then we call it $P$-\emph{linear} or
\emph{piecewise linear} if there is no need to single out $P$. A
$P$-monotone map $f$ is \emph{Markov map} if $P\subset T$ contains all
vertices of $T$ and $f(P)\subset P$. In the above situation we call $f$ a
$P$-\emph{Markov} map for short.  If $f$ is a $P$-Markov map, then the
\emph{Markov graph} of $f$ with respect to $P$ ($P$-Markov graph of $f$ for
short) is then defined as a directed graph with the set of $P$-basic
intervals as a set of vertices and with the set of edges defined by the
$f$-covering relation, that is, there is an edge from a $P$-basic interval
$I$ to $P$-basic interval $J$ in the Markov graph ($I\to J$) if and only if $J$ is
$f$-covered through $f$ by $I$. A \emph{path (of length $n$)} in a graph
$\mathcal{G}$ is any sequence $I_0,I_1,\ldots,I_n$ of vertices of
$\mathcal{G}$ such that there exists an edge $I_{i-1}\to I_{i}$ for each $i=1,\ldots,n$.
A \emph{cycle} of length $n$ is any path $I_0,I_1,\ldots,I_n$ of length
$n$ such that $I_0=I_n$. Graph is \emph{strongly connected} if for any
pair of its vertices $I,J$ there is a path $I_0,I_1,\ldots,I_n$ in
$\mathcal{G}$ with $I=I_0$ and $J=I_n$. If $I_1,\ldots,I_n$
is an enumeration of the set of $P$-basic intervals then the
\emph{incidence matrix} of $f$ with respect to $P$ is a $n$ by $n$ matrix
$A=[a_{ij}]$ with $a_{ij}=1$ if $I_i\to I_j$ and $a_{ij}=0$ otherwise. The
\emph{spectral radius} of a  square complex matrix is defined as the largest
absolute value of its eigenvalues.


There are also few results we would like to recall for later reference.
The following Lemma comes from \cite[Corollorary 1.11]{Baldwin}.

\begin{lemma}\label{lem:Markov-transitive}
Suppose $T$ is a tree and $f\colon T\defmapsto T$ is $P$-Markov and $P$-linear with
respect to some $P$ containing all nodes of $T$. Then $f$ is transitive if and only
if the $P$-Markov graph of $f$ is strongly connected and is not a graph of a cyclic
permutation.
\end{lemma}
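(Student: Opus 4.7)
The plan is to prove each direction separately; the backwards implication contains the substance.

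\textbf{Forward direction.} Suppose $f$ is transitive. For strong connectedness of the Markov graph, take any two basic intervals $I,J$ and use transitivity to pick $n$ with $f^n(\interior I) \cap \interior J \neq \emptyset$. Since $f$ is $P$-Markov and $P$-linear, $f(K)$ for any basic interval $K$ is a union of basic intervals with pairwise disjoint interiors; iterating, $f^n(I)$ is likewise such a union, so $J \subset f^n(I)$. Peeling off one iterate at a time yields an $n$-step path from $I$ to $J$ in the Markov graph. To rule out the cyclic permutation case, observe that if each basic interval $f$-covered exactly one basic interval, then by $P$-linearity $f$ would restrict on each basic interval to a linear bijection onto another basic interval; these pieces glue into a homeomorphism of $T$. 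Every continuous self-map of a tree has a fixed point, whose orbit is a singleton, so the nondegenerate tree $T$ cannot be transitive under $f$.

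\textbf{Backwards direction.} Now assume the Markov graph $\mathcal{G}$ is strongly connected and not a cyclic permutation. First, some vertex $I_*$ of $\mathcal{G}$ must have out-degree at least two: otherwise every vertex has out-degree exactly one, and a strongly connected finite digraph in which all out-degrees equal one is a single cycle, contradicting our hypothesis. The crux is to show that every nonempty open $U \subset T$ satisfies $f^N(U) \supset I_1$ for some basic interval $I_1$ and some $N$. I would take a nondegenerate subarc $A \subset U$ inside a single basic interval. If $f^n(A)$ remained inside a single basic interval for all $n$, the resulting infinite trajectory in $\mathcal{G}$ would avoid every vertex of out-degree $\geq 2$; finiteness and determinism then trap this trajectory in a cycle of out-degree-one vertices from which no edge escapes, contradicting strong connectedness of $\mathcal{G}$ (as $I_*$ could not be reached from this cycle). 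Thus some iterate $f^n(A)$ crosses a point of $P$ in its interior; continuing the analysis, with $A$'s images splitting across adjacent basic intervals and the branching at $I_*$ amplifying length, a saturation argument forces $f^N(A)$ to contain a full basic interval $I_1$. Once $f^N(U) \supset I_1$, given any nonempty open $V$, select a basic interval $J$ meeting $V$, use strong connectedness of $\mathcal{G}$ to obtain a path $I_1 \to I_2 \to \cdots \to I_k = J$, and deduce $J \subset f^{k-1}(I_1) \subset f^{N+k-1}(U)$, so $f^{N+k-1}(U) \cap V \neq \emptyset$.

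The principal obstacle is the expansion step: converting ``the forward orbit of $A$ must eventually cross $P$'' into ``some iterate of $A$ contains an entire basic interval.'' Quantifying this requires exploiting the out-degree-$\geq 2$ vertex $I_*$ to amplify arc length, for instance by bounding from below the Perron eigenvalue of the incidence matrix (using that an irreducible, non-permutation nonnegative matrix has spectral radius strictly greater than one), or by tracking the finite combinatorial state of $A$'s image relative to $P$ and invoking a pigeonhole argument on basic-interval lengths.
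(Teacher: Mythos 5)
You should first note that the paper offers no proof of this lemma at all: it is quoted directly from Baldwin (the cited Corollary~1.11), so there is nothing in-paper to compare against and your argument has to stand on its own. Its skeleton is the standard one, but two steps are genuinely broken. In the forward direction, the exclusion of the cyclic permutation rests on ``every continuous self-map of a tree has a fixed point, whose orbit is a singleton, so $T$ cannot be transitive.'' That is a non sequitur: transitivity does not require every orbit to be dense, and transitive tree maps do have fixed points (the tent map, say). What does work: if the Markov graph is a cyclic permutation of $k$ basic intervals, then $f^k$ maps each basic interval onto itself by a linear bijection, hence by an isometry, so $f^{2k}=\mathrm{id}$ and a periodic map of a nondegenerate continuum is not transitive.

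The more serious problem is in the backward direction, where the heart of the lemma --- that some iterate of an arbitrary nondegenerate arc $A$ contains a whole basic interval --- is exactly the step you leave open, and the one concrete argument you do give for the preliminary claim is false. From ``$f^n(A)$ lies in a single basic interval for all $n$'' you infer that the itinerary in $\mathcal{G}$ avoids all vertices of out-degree $\ge 2$; but a short arc can sit inside a basic interval $I$ whose image $f(I)$ is a union of several basic intervals and still land inside just one of them, so the itinerary is in no way confined to out-degree-one vertices. The mechanism that actually works is metric, not combinatorial: by $P$-linearity the slope on $I$ is $\alpha_I=\sum_{I\to J}|J|/|I|$, so the product of slopes around any cycle of $\mathcal{G}$ is $\ge 1$, with equality only if every vertex on the cycle has out-degree one --- impossible in a strongly connected non-permutation graph, since such a cycle would be closed under the unique-successor map and would therefore be the whole graph. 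Hence the slope product over every cycle is bounded away from $1$, every long itinerary decomposes into cycles plus a bounded remainder, and $\diam f^n(A)$ grows geometrically as long as the images avoid containing a basic interval; once that diameter exceeds roughly twice the largest basic interval one gets a contradiction, because a subtree containing no basic interval can contain at most one point of $P$ (two points of $P$ in a subtree force it to contain all basic intervals on the arc between them). Without this quantitative chain --- or the equivalent conjugacy to the constant-slope model with slope the Perron eigenvalue $\lambda>1$ --- the proof is incomplete; you acknowledge as much in your closing paragraph. The final propagation step, from one covered basic interval to all of $T$ via strong connectedness, is fine.
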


The following lemma is well-known. We restate it in a suitable form.

\begin{lemma}\label{lem:Markov-entropy}
Let $f\colon T\defmapsto T$ be a Markov tree map. If $\mathcal{G}$ is any Markov
graph of $f$ and $A=[a_{ij}]_{n\times }$ is the corresponding incidence matrix
with a spectral radius $\rho\ge 0$, then $h(f)=\log \rho$ if $\rho>0$, and
$h(f)=0$ otherwise.
Moreover, if there is $s>0$ such that for every vertex $v$ of $\mathcal{G}$
the number of directed paths of length $n>0$ starting at $v$ in $\mathcal{G}$
is bounded from the above by $s^n$, then $h(f)\le \log s$.

If in addition $\mathcal{G}$
is strongly connected and there are $t>0$ and $n>0$ such that for some vertex
$v$ of $\mathcal{G}$ the number of paths of length $n>0$ starting at $v$ in
$\mathcal{G}$ is bounded below by $t^n$, then $ \log t\le h(f)$.\end{lemma}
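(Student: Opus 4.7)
The plan is to identify $(T,f)$ with the one-sided subshift of finite type $(\Sigma_A,\sigma)$ associated with the incidence matrix $A$, and then read off all three statements from standard facts about $\Sigma_A$ and the growth of entries of $A^n$.

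First, I would set up the itinerary coding. Let $I_1,\dots,I_N$ enumerate the $P$-basic intervals. Because $f$ is $P$-monotone and Markov ($f(P)\subset P$), each $f|_{I_i}$ is a homeomorphism onto a connected union of basic intervals, and each basic interval $I_j\subset f(I_i)$ gives a unique edge $I_i\to I_j$ in $\mathcal{G}$. For any point $x\in T$ whose full forward orbit misses $P$, the itinerary $\iota(x)=(a_k)\in\{1,\dots,N\}^{\mathbb{N}}$ defined by $f^k(x)\in\interior I_{a_k}$ satisfies $a_{k}a_{k+1}$-admissibility, i.e. $\iota(x)\in\Sigma_A$. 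Conversely, since for each admissible word $a_0\dots a_n$ the set $\bigcap_{k=0}^n f^{-k}(I_{a_k})$ is a nonempty compact connected subinterval of $I_{a_0}$ (by monotonicity and the Markov property), the Cantor-style nested intersection argument yields a continuous surjection $\pi\colon\Sigma_A\to T$ with $\pi\circ\sigma=f\circ\pi$. Moreover, any two points of $T$ with the same itinerary lie in the closure of the same nested intersection; in a tree a point lies in at most $\val(p)\le N$ basic intervals, so the multiplicity of $\pi$ is uniformly bounded by some constant $M$.

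Second, once the semiconjugacy $\pi$ is set up I would invoke the classical fact that $h(\sigma|_{\Sigma_A})=\log\rho(A)$ (if $\rho>0$, and $0$ otherwise), which follows because the number of admissible words of length $n+1$ equals $\sum_{i,j}(A^n)_{ij}$ and this sum grows exactly like $\rho^n$ by the Perron–Frobenius / Gelfand spectral radius formula. Since $\pi$ is a factor map with bounded finite fibers, $h(f)=h(\sigma|_{\Sigma_A})=\log\rho(A)$, giving the main statement. For the upper bound via path counts, if each vertex emits at most $s^n$ paths of length $n$ then there are at most $Ns^n$ admissible words of length $n+1$, so $h(\sigma|_{\Sigma_A})\le\log s$ and hence $h(f)\le\log s$. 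For the lower bound with path count $\ge t^n$ from some vertex $v$ in a strongly connected $\mathcal{G}$, by strong connectivity we can close each such length-$n$ path back to $v$ by a bounded number of extra edges, producing at least $t^n$ distinct periodic admissible words of period $n+C$; by the bounded-multiplicity of $\pi$ this gives at least $M^{-1}t^n$ periodic points of $f$ that are pairwise $(n,\eps)$-separated for any $\eps$ smaller than the minimum length of a basic interval, yielding $h(f)\ge\log t$.

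The step I expect to be the main technical obstacle is verifying that the coding $\pi$ has uniformly bounded multiplicity and, crucially, that distinct admissible words correspond to $(n,\eps)$-separated points of $T$ for a fixed $\eps$ independent of $n$. In the interval case this is classical; on a tree one has to be careful because $P$-basic intervals meet at vertices and so two itineraries can collide precisely when their coded orbits enter a common vertex of $P$. I would handle this by choosing $\eps$ strictly less than $\min_i\diam I_i$, noting that if two admissible words first differ at index $k$ then their images in $T$ after $k$ iterations lie in the interiors of two distinct basic intervals and are therefore at distance at least $\eps$; together with the bound on $|\pi^{-1}(x)|\le M$, this converts path counts in $\mathcal{G}$ into separated-set counts in $T$ and closes both the upper and lower estimates.
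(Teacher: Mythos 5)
Your route is genuinely different from the paper's (which simply cites Baldwin's Proposition~1.4 for $h(f)=\log\rho$ and gets both ``moreover'' bounds from two lines of linear algebra), but as written it has two real gaps, both located exactly where you predicted the difficulty would be. The separation claim is false: if two admissible itineraries first differ at index $k$, the corresponding orbit points at time $k$ lie in two \emph{distinct} basic intervals, but distinct basic intervals of a tree meet at points of $P$, so the two orbit points can both be arbitrarily close to a shared endpoint; no choice of $\eps<\min_i\diam I_i$ makes distinct cylinders $(n,\eps)$-separated. So path counts in $\mathcal{G}$ do not convert directly into separated-set counts in $T$, and your lower-bound argument collapses at that step. (It is also unnecessary: once $h(f)=h(\sigma|_{\Sigma_A})$ is known, the periodic-word count already bounds $h(\sigma)$, hence $h(f)$, from below, with no need to descend to $T$.)

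The second gap is the uniform bound on the multiplicity of $\pi$, which is the load-bearing step for $h(\sigma)\le h(f)$. The observation that a point of $T$ lies in at most $\val(p)$ basic intervals bounds the ambiguity of \emph{one} symbol, but $\pi^{-1}(x)$ consists of entire itineraries, and since $f(P)\subset P$ an orbit that meets $P$ once stays in $P$ forever, so a priori there is a choice at every subsequent time and $\pi^{-1}(x)$ could be a positive-entropy subshift --- in which case Bowen's fiber inequality gives nothing and the equality $h(f)=\log\rho$ does not follow. The claim is in fact true, but for a reason you do not supply: if $f^k(x)\in P$ then $f^k(x)$ is an endpoint of the arc $I_{\omega_k}$, the monotone image $f(I_{\omega_k})$ is an arc having $f^{k+1}(x)$ as an endpoint, and such an arc contains at most one basic interval incident to $f^{k+1}(x)$; hence $\omega_{k+1}$ is determined by $\omega_k$ from the first entry into $P$ onward, and $\lvert\pi^{-1}(x)\rvert\le\max_{p\in P}\val(p)$. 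With that argument inserted and the separation step replaced by working entirely inside $\Sigma_A$, your proof goes through; without them it does not. The paper's own proof sidesteps both issues entirely by quoting the known identity $h(f)=\log\rho$ and then only observing that the number of length-$n$ paths from $v$ is the $v$-th row sum of $A^n$, squeezed against the spectral radius of the irreducible incidence matrix by the classical min/max row-sum bounds.
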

\begin{proof}The connection between the spectral radius of the incidence
matrix and topological entropy is well known (see \cite[Proposition 1.4]{Baldwin}).
It is easy to see that the number of paths of length $n>0$ starting at $v$ in
$\mathcal{G}$ is equal to the row-sum of row $v$ in $A^n$. Moreover, the incidence
matrix of a graph is irreductibile if and only if the graph is strongly connected.
The bounds on the spectral radius $\rho$ of irreducible square matrix
$B=[b_{ij}]_{i,j=1}^m$ comes from well known formula (e.g. see \cite[Exercise 4.2.3., p.111]{ISDC}):
\[
\min_{i=1,\ldots,m}\sum_{j=1}^m b_{ij} \le \rho \le \max_{i=1,\ldots,m}\sum_{j=1}^m b_{ij}.
\]

\end{proof}


The following theorem is a restatement of \cite[Theorem 3.2.]{Baldwin}.
We added the assertion $(\star)$, which is not included in the original
statement, but is a consequence of the proof provided in \cite{Baldwin}.

\begin{theorem}\label{thm:edge-adding}
Let $T$ be a tree, and $f\colon T\defmapsto T$ be transitive and Markov. Let $z$ be a
fixed point of $f$, and let $T'$ be the tree which is obtained from $T$ by attaching
an arc to $z$ at one of the endpoints of the arc. Then for every $\eps>0$, there is
a transitive Markov map $f'\colon T'\defmapsto T'$ such that $h(f')<h(f)+\eps$ and both
ends of the new arc are fixed for $f'$.
Furthermore, $f'$ can be defined so that, in addition,
\begin{description}
\item[$(\star)$] If some subset of the endpoints of $T$ forms an $f$-cycle $C$, then
$C$ is also a periodic orbit of $f'$.
\end{description}
\end{theorem}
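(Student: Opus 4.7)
The plan is to follow Baldwin's original construction in \cite[Theorem 3.2]{Baldwin} and verify that the addendum $(\star)$ comes essentially for free. The guiding principle is that Baldwin's construction leaves $f|_T = f'|_T$ unchanged (apart from, at worst, refining the Markov partition $P$), so any cycle of endpoints of $T$ under $f$ is automatically a cycle under $f'$.

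First, I would arrange that $z \in P$: since $f(z)=z$, adjoining $z$ to $P$ preserves the Markov property by subdividing one basic interval into two. Next, I would write $A=[z,e]$ for the new arc (where $e\in\End(T')$ is the new endpoint) and set $f'=f$ on $T$. The extension to $A$ is defined as follows. Fix $N$ large (to be chosen later) and subdivide $A$ into $N$ consecutive basic intervals $A_1,\ldots,A_N$ running from $z$ to $e$, with subdivision points $p_0=z, p_1,\ldots,p_{N-1},p_N=e$. On $A_2,\ldots,A_{N-1}$ define $f'$ linearly so that each $A_i$ is shifted onto $A_{i-1}$; on $A_N$ put a linear flip so that $f'(e)=e$ and $f'(A_N)=A_{N-1}$; and on $A_1$ expand linearly so that $f'(A_1)=A\cup J$, where $J$ is a basic interval of $T$ incident to $z$ (this is the crucial connection from the new arc back into the old tree). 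The resulting $f'$ is $P'$-Markov and $P'$-linear with respect to $P'=P\cup\{p_0,\ldots,p_N\}$.

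Strong connectedness of the new Markov graph (hence transitivity of $f'$ via Lemma~\ref{lem:Markov-transitive}) follows because every vertex inside $A$ reaches $A_1$ by iterated shifting, $A_1$ reaches $J\subseteq T$ by construction, and $T$ reaches $A$ by combining transitivity of $f$ with the attachment of $A$ at $z\in T$; moreover the graph is not a cyclic permutation since $A_1$ has out-degree at least two. For the entropy bound, let $M$ denote the incidence matrix of $f$ and $M'$ the incidence matrix of $f'$: by Lemma~\ref{lem:Markov-entropy}, $h(f')=\log\rho(M')$, and $M'$ is obtained from $M$ by attaching an $N$-vertex arm with a single edge going from $A_1$ back to the row of $M$ corresponding to $J$. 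A standard perturbation estimate shows that $\rho(M')\to\rho(M)$ as $N\to\infty$, so picking $N$ large enough yields $h(f')<h(f)+\eps$. Finally for $(\star)$: since $f'=f$ on $T$, each endpoint $p\in\End(T)$ with $p\ne z$ satisfies $f'(p)=f(p)$; if a cycle $C\subseteq\End(T)$ contains $z$ then $f(z)=z$ forces $C=\{z\}$, which remains a fixed point of $f'$.

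The main obstacle I anticipate is the quantitative entropy estimate: verifying that $\rho(M')$ can be made arbitrarily close to $\rho(M)$ by choosing $N$ large. The key point is that the attached arm contributes only one closed loop (of combinatorial length $N$, passing through $A_1$ and through $J$), whose influence on the characteristic polynomial vanishes as $N\to\infty$; quantifying this requires either a direct row-sum bound on $(M')^n$ via Lemma~\ref{lem:Markov-entropy} or a characteristic-polynomial / resolvent argument. Everything else---transitivity, the condition $f'(e)=e$, and preservation of endpoint cycles---is essentially bookkeeping once the construction is in place.
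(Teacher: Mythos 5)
There is a genuine gap, and it is fatal to the construction as described: you set $f'=f$ on all of $T$ and only extend $f'$ over the new arc $A=[z,e]$. Since $f(T)\subseteq T$ and $A\setminus\{z\}$ is disjoint from $T$, no basic interval of $T$ ever $f'$-covers any $A_i$, so in the Markov graph of $f'$ there is no edge from any old vertex into the arm. Concretely, for open sets $U\subseteq\interior I$ with $I\subseteq T$ and $V\subseteq\interior A_N$ one has $(f')^n(U)\subseteq T$ for all $n$, hence $(f')^n(U)\cap V=\emptyset$ forever: $f'$ is not transitive, and your claim that ``$T$ reaches $A$ by combining transitivity of $f$ with the attachment at $z$'' does not hold. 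Any correct proof must \emph{modify $f$ on a small piece of $T$} so that some basic interval of $T$ covers part of the new arc --- this is exactly the ``window'' device used in the paper's own Lemmas~\ref{lem:tot-trans-entropy} and~\ref{lem:pure-mix-entropy} (insert points $x<y'<x'<y$ in one basic interval $[x,y]$, send $[x,y']\cup[y',x']$ into the new region, and carry a squeezed copy of $f|_{[x,y]}$ on $[x',y]$). Once you do that, the justification of $(\star)$ also has to change: it is no longer ``$f'=f$ on $T$'' but ``$f'|_P=f|_P$'', which still fixes the images of the endpoints of $T$. Note the paper itself gives no proof of this theorem (it quotes Baldwin and asserts $(\star)$ follows from his proof), so the relevant comparison is with these two neighbouring lemmas, whose technique is what you need here.

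There are also internal inconsistencies in the arm dynamics that you would have to repair even after fixing transitivity. A linear map with $f'(A_N)=A_{N-1}$ cannot satisfy $f'(e)=e$, since $e\notin A_{N-1}$; the standard fix is $f'([p_{N-1},e])=[p_{N-2},e]$ with $f'(e)=e$. Likewise a monotone map of $A_1=[z,p_1]$ onto the arc $A\cup J$ must send the endpoint $z$ to an endpoint of $A\cup J$, contradicting $f'(z)=z$, so $A_1$ would need further subdivision. Worse, if $A_1$ really covered all of $A_1,\dots,A_N$, the arm subgraph alone would contain first-return loops at $A_1$ of every length $1,\dots,N$ and hence carry entropy close to $\log 2$ independently of $N$, destroying the bound $h(f')<h(f)+\eps$. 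The correct entropy argument is the path-counting one from Lemma~\ref{lem:tot-trans-entropy}: make the excursion through the new vertices long (length about $N$) and essentially unbranched, so each length-$L$ path of the old graph corresponds to at most a bounded number of new paths, giving $3r^L<s^L$ for $L$ large; your resolvent heuristic is the right intuition but only applies after the arm is wired so that it contributes a single long loop rather than loops of all lengths.
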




Let $(X,x_0)$ and $(Y,y_0)$ be topological pointed spaces (spaces with distinguished
basepoints). The wedge sum of $(X,x_0)$ and $(Y,y_0)$ (denoted as
$(X,x_0)\wedge(Y,y_0)$ is the quotient of the disjoint union of $X$ and $Y$ by the
identification $x_0 \sim y_0$. The $m$-th wedge power $(X,x_0)^{\wedge m}$ is defined
as wedge sum of $m$ copies of $(X,x_0)$.

\begin{lemma}\label{lem:wedge-entropy}
Let $X$ be a topological space and $f\colon X\defmapsto X$ be a transitive map.
If $x_0$ is a fixed point of $f$, then for every $m\geq 2$ there is a transitive,
but not totally transitive map
$F\colon (X,x_0)^{\wedge m}\defmapsto (X,x_0)^{\wedge m}$ such that $h(F)=h(f)/m$.
Moreover, $x_0$ is the unique fixed point of $F$, and all other periodic orbits of $f$ are
periodic points of $F$ with $m$ times longer primary periods (formed by the $m$-times copy
of points of the orbit of $f$). Furthermore, if $X$ is a tree and $f$ is a Markov map, then the same hold for
$(X,x_0)^{\wedge m}$ and $F$ respectively.
\end{lemma}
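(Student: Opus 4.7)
The plan is to construct $F$ as the cyclic shift of the $m$ copies of $X$ in the wedge, twisted by $f$ on the last copy. Writing $Y = (X,x_0)^{\wedge m}$ as the union of its canonical copies $X_0,\ldots,X_{m-1}$, with natural homeomorphisms $\phi_i \colon X \to X_i$ all sending $x_0$ to the wedge point, I would define
\[
F(\phi_i(x)) = \phi_{i+1}(x) \;\;\text{for } 0\le i\le m-2, \qquad F(\phi_{m-1}(x)) = \phi_0(f(x)).
\]
Since $f(x_0)=x_0$, the local rules agree at the wedge point, so $F$ is continuous. The key observation driving the rest of the argument is that $F^m|_{X_i} = \phi_i\circ f\circ \phi_i^{-1}$ is topologically conjugate to $f$ for every $i$.

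Next I would derive the dynamical properties from this conjugacy. For any $y\in X_i\setminus\{x_0\}$, one has $F(y)\in X_{(i+1)\bmod m}\setminus\{x_0\}$, because $\phi_{i+1}$ takes $x_0$ only to $x_0$; in particular the only fixed point of $F$ is $x_0$, and each $f$-periodic orbit $\{p, f(p), \ldots, f^{k-1}(p)\}$ with $p\neq x_0$ lifts through the $\phi_i$'s to a single $F$-periodic orbit of prime period $mk$. To get transitivity, I would take any two nonempty open sets $U,V$ in $Y$ disjoint from $x_0$; they lie in some $X_i,X_j$, a suitable power of $F$ carries $U$ inside $X_j$, and then transitivity of $F^m|_{X_j}$ provides the required hit in $V$. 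The invariant decomposition $\{X_0,\ldots,X_{m-1}\}$ witnesses via Theorem~\ref{thm:trans-not-tot-trans} that $F$ is not totally transitive.

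The main technical step is the entropy equality $h(F) = h(f)/m$. I would combine the power rule $h(F^m) = m\cdot h(F)$ with the identity $h(F^m) = h(f)$. For the latter, the sets $X_0,\ldots,X_{m-1}$ are closed, cover $Y$, and each is invariant under $F^m$ with $F^m|_{X_i}$ conjugate to $f$; Bowen's standard formula for topological entropy on a finite union of closed invariant sets then yields $h(F^m) = \max_i h(F^m|_{X_i}) = h(f)$. This is the only place where the argument is not purely formal; every other claim is bookkeeping around the cyclic structure.

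Finally, for the Markov case I would note that a finite wedge of trees at a common point is again a tree. If $f$ is $P$-Markov, I would enlarge $P$ (if necessary) to include $x_0$, which preserves the Markov property, and set $P' = \bigcup_{i=0}^{m-1}\phi_i(P)$. Then $P'$ contains all vertices of $Y$, satisfies $F(P')\subseteq P'$, and $F$ is monotone on every component of $Y\setminus P'$: for $i<m-1$ the restriction $F|_{X_i\setminus P'}$ is a homeomorphism onto a component of $X_{i+1}\setminus P'$, whereas for $i=m-1$ it is conjugate to the monotone map $f$ on a component of $X\setminus P$. Hence $F$ is $P'$-Markov, completing the verification.
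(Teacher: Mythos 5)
Your construction is exactly the one in the paper: the cyclic shift of the $m$ wedge copies, twisted by $f$ on the last copy, with the entropy obtained from $h(F^m)=m\,h(F)$ and the conjugacy $F^m|_{X_i}=\phi_i\circ f\circ\phi_i^{-1}$. The paper declares the remaining verifications ``straightforward,'' and the details you supply (continuity at the wedge point, the multiplication of prime periods by $m$, the maximum formula for the entropy of $F^m$ over the closed invariant sets $X_i$, and the Markov set $P'=\bigcup_i\phi_i(P)$) are all correct.
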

\begin{proof}Let us identify a disjoint union of $m$ copies of $X$ with
$X\times\{0,\ldots,m-1\}$. Define a map
$F'\colon X\times\{0,\ldots,m-1\}\defmapsto X\times\{0,\ldots,m-1\}$ by
\[
F'((x,i))=
\begin{cases}
(x,i+1),&\text{for}\; i=0,\ldots,m-2,\\
(f(x),0),&\text{for}\;i=m-1.
\end{cases}
\]
It is easy to see that $F'$ induces a quotient map $F$ on
$(X,x_0)^{\wedge m}$ with the desired properties.
The rest of the proof is now straightforward.
\end{proof}

Note, that if $m\ge 2$, then the map constructed in Lemma \ref{lem:wedge-entropy} is
not totally transitive. To remove this problem we need the following Lemma which
gives a general method of constructing
exact Markov maps from transitive but not totally transitive Markov maps
and enables us to control the entropy.
The proof of the upper bound for entropy follows the ideas of \cite{Baldwin}.

\begin{lemma}\label{lem:tot-trans-entropy}
If $f$ is a transitive $P$-Markov and $P$-linear map of a tree $T$, which is not
totally transitive, then for every $\eps>0$, there is a totally transitive (hence, exact)
$P'$-Markov and $P'$-linear  map $f'\colon T\defmapsto T$ such that $h(f')<h(f)+\eps$.
Furthermore, $f'$ can be defined so that, if some subset of the endpoints of $T$
form an $f$-cycle $C$, then $C$ is also a periodic orbit of $f'$.
\end{lemma}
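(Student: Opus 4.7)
The plan is to invoke the cyclic decomposition Theorem~\ref{thm:trans-not-tot-trans} and break the cyclic structure with a small Markov perturbation, in the spirit of Baldwin's arguments behind Theorem~\ref{thm:edge-adding}. Since $f$ is transitive but not totally transitive, Theorem~\ref{thm:trans-not-tot-trans} yields an integer $k>1$ and non-degenerate subtrees $G_0,\ldots,G_{k-1}$ of $T$, meeting pairwise only at endpoints, with $f(G_i)=G_{(i+1)\bmod k}$ and each $f^k|_{G_i}$ totally transitive. After refining $P$ to contain every endpoint of every $G_i$ (which keeps $f$ being $P$-Markov and $P$-linear), each $f^k|_{G_i}$ is a non-invertible totally transitive Markov linear tree map, hence mixing (by \cite{HKO}) and then exact by Theorem~\ref{thm:3-horseshoe-kappa}; in particular the restriction of the Markov graph of $f^k$ to the basic intervals of $G_i$ is primitive.

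Connectedness of $T$ and cyclic symmetry force the set $D\subset\{1,\ldots,k-1\}$ of indices $m$ with $G_0\cap G_m\ne\emptyset$ to generate $\mathbb{Z}/k\mathbb{Z}$. For notational simplicity I describe the perturbation when some $m\in D$ is coprime to $k$ (the general case combines one such perturbation for each member of a generating subset of $D$). I would pick $v\in G_0\cap G_m$, so $w=f(v)\in G_1\cap G_{m+1}$, and then a basic interval $I\subset G_0$ incident to $v$. Subdivide $I$ near $v$ into a very short $I_2=[v,v']$ of length $\delta>0$ and its complement $I_1$. Define $f'$ to coincide with $f$ outside $I_2$ and to be linear on $I_2$, with $f'(v')=f(v')$ (for continuity) and $f'(v)=c$, where $c\in G_{m+1}$ lies on the opposite side of the branching point $w$ from $f(v')$ and is chosen as an $f$-preimage of an element of $P$. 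Adding $v'$, $c$, and finitely many iterated $f$-preimages to $P$ yields an extended vertex set $P'$ with respect to which $f'$ is Markov and linear.

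To establish total transitivity I would analyze the Markov graph $\mathcal{G}'$ of $f'$ with respect to $P'$: every covering relation of $f$ survives (subdivided basic intervals retain the relations they inherit), so $\mathcal{G}'$ is strongly connected. In addition, $I_2\subset G_0$ now $f'$-covers a basic interval sitting in $G_{m+1}$, creating a cycle in $\mathcal{G}'$ of length $k-m$. Since $\gcd(k,k-m)=\gcd(k,m)=1$, $\mathcal{G}'$ has period $1$, i.e., is primitive; by Lemma~\ref{lem:Markov-transitive} applied to all iterates of $f'$, the map $f'$ is totally transitive, and being non-invertible Markov, it is mixing and then exact as before. For the entropy bound I would compare the incidence matrices $A$ and $A'$. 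The perturbation enlarges $A$ by a bounded number of rows and columns arising from the new vertices in $P'$, and because $f'=f$ outside $I_2$ with $|f'(I_2)|$ of order $\delta$, the new covering relations come from arcs whose length tends to $0$ with $\delta$. A continuity argument for the Perron--Frobenius eigenvalue (as used in Baldwin's proof of \cite[Theorem~3.2]{Baldwin}) gives $\rho(A')\to\rho(A)$ as $\delta\to 0$, so by Lemma~\ref{lem:Markov-entropy}, for $\delta$ small enough we obtain $h(f')<h(f)+\eps$.

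Finally, the perturbation is supported strictly inside the basic interval $I$, so every endpoint of $T$ together with its $f$-image is untouched; any $f$-cycle contained in $\End(T)$ therefore remains an $f'$-cycle. The main obstacle is the entropy estimate: $A'$ is not an entry-wise small perturbation of $A$ because subdividing $I$ and enlarging $P$ changes the dimension of the matrix. Making the continuity of the spectral radius precise requires careful bookkeeping exploiting the block structure of $A$ inherited from the cyclic decomposition (the primitivity of the diagonal blocks of $A^k$ established in the first paragraph), which is exactly the delicate step carried out by Baldwin in the analogous arguments.
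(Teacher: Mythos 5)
Your overall strategy (use the cyclic decomposition of Theorem~\ref{thm:trans-not-tot-trans}, then break the period of the Markov graph by one extra covering relation) is the same as the paper's, but your implementation has two genuine problems.

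First, the perturbation is not well defined. You set $f'(v)=c\neq f(v)=w$ while keeping $f'=f$ outside $I_2=[v,v']$. But $v\in G_0\cap G_m$ is a common endpoint of the two subtrees, so $v$ also lies in at least one basic interval of $G_m$ (and possibly in other basic intervals of $G_0$) on which $f'=f$; there $f'(v)$ must equal $w$. Hence $f'$ is discontinuous at the branch point $v$. The paper avoids this by modifying $f$ only strictly inside a basic interval $[x,y]$ (keeping the values at both endpoints), choosing $x=q$ with $f(q)=p$ the unique fixed point, and inserting two new subintervals mapping onto an arc $[p,w_0]$.

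Second, and more seriously, the entropy estimate does not work as stated. The incidence matrix is a $0$--$1$ matrix, so its entries are not continuous functions of $\delta$: for every sufficiently small $\delta$ the new Markov graph $\mathcal{G}'$ is one and the same combinatorial object, containing a genuinely new cycle, and its spectral radius is a fixed number strictly larger than $\rho(A)$ that need not be within $\eps$ of $\rho(A)$. Your claim that the new covering relations ``come from arcs whose length tends to $0$ with $\delta$'' is also false: since $f'(v')=f(v')\to w$ and $f'$ sends the other end of $I_2$ to the fixed target $c$, the image $f'(I_2)$ contains the arc from $w$ to $c$, whose length is bounded below independently of $\delta$, so $I_2$ covers finitely many full basic intervals with unrestricted further branching. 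The missing idea — which is precisely what Baldwin's argument and the paper's proof supply — is a \emph{long forced corridor}: one chooses $L$ with $3r^L<s^L$ and a point $w_0$ whose orbit $w_0,\dots,w_{L-1}$ stays in the intervals $[p,w_j]$ adjacent to the fixed point with $f^L(w_0)\in P$, and routes the new covering relation onto $[p,w_0]$. Then each of the intervals $[p,w_j]$ covers only $[p,w_{j+1}]$, so every path of length $L$ in $\mathcal{G}'$ passes through the new vertices at most once and each old path corresponds to at most three new ones; this yields $h(f')\le\frac1L\log(3r^L)<\log s$. Without such a delay mechanism your construction gives no control on $h(f')-h(f)$, and you acknowledge as much in your final paragraph, so the key quantitative step of the lemma is missing.
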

\begin{proof}
Let $f\colon T\defmapsto T$ be a transitive, but not totally transitive
$P$-Markov and $P$-linear tree map. We denote the $P$-Markov graph of $f$ by
$\mathcal{G}$. Fix $\eps>0$ and let $r,s>1$ be such that $h(f)<\log r < \log s < h(f)+\eps$.
By Theorem \ref{thm:trans-not-tot-trans} $f$ has the unique fixed point $p\in T$ and
we can enumerate the closures of connected components
of $T\setminus\{p\}$ by $T_0,T_1,\ldots,T_{n-1}$ with $n\ge 1$ in such a way that
$f(T_{j})=T_{j+1}$ for $j=0,\ldots,n-1$, and $f(T_{n-1})=T_{0}$ hold. Without loss of
generality we may assume that $p\in P$. Moreover,
for each $j=0,1,\ldots,n-1$ the map $f^{n}|_{T_j}\colon T_j\defmapsto T_j$ is a
totally transitive (thus, exact) piecewise linear Markov map, in particular, there
is a point $q\neq p$ such that $f(q)=p$. Without loss of generality we may
assume that $q\in T_0$. Let $[x,y]$ be a $P$-basic interval in $T_0$ containing
$q$. If $q\notin P$, then either $f|_{[x,y]}$ would be not monotone, or $f(T_0)$ would
intersect  $T_j$ with $j\neq 1$. Hence $q\in P$, and without loss of generality
we may that assume $x=q$. For $j=0,1,\ldots,n-1$ let $I_j=[p,z_j]$ denote the $P$ basic
interval in $T_j$ containing $p$. 

Let us choose $\Lambda>0$ such that $3r^L<s^L$, for all $L\ge \Lambda$, where $r$ and
$s$ are as above. Then there is a point $w_0$ in the interior of $[p,z_0]$ and
$L\ge \Lambda$ such that for $k=0,\ldots, L-1$ and $j=(k \mod n)$
we have $w_k=f^{k}(w_0)\in \interior I_j$, and $f^{L}(w_0)\in P$.

Let $P''=P\cup \{w_j:j=0,\ldots,L-1\}$, and let $\mathcal{G}''$ be the Markov graph
of $f$ with respect to $P''$.
We define $f'\colon T\defmapsto T$ by modifying $f$ on $[x,y]$ only, that is,
we put $f'(z)=f(z)$ for all $z\in T\setminus [x,y]$. Next, we choose two points
$x'$, $y'$ in $[x,y]$ with $x<y'<x'<y$, then we set $f'(y')=w_0$ and $f'(x')=p$, and
extend $f'$ to $[x,x']$ by making it linear on $[x,y']$ and
$[y',x']$. We identify $[x',y]$ with $[x,y]$ by a linear homeomorphism $\psi$ such
that $\psi(x')=x$ and for $z\in[x',y]$ we define $f'(z)=f(\psi(z))$. Then $f'$ is a
continuous map of $T$, which is $P'$-Markov and $P'$-linear with respect to
$P'=P''\cup\{x',y'\}$.
Note that if some subset of the
endpoints of $T$ form an $f$-cycle $C$, then $C$ is also a periodic orbit of $f'$
since $f'|_P=f|_P$.

We claim that $f'$ is totally transitive. First, observe that $f'(T_0)$ contains
$f'([x,y])$, and hence it intersects interiors of both, $T_0$ and $T_1$.
As $p$ is the unique fixed point of $f'$, and $T_0,\ldots,T_{n-1}$ are no longer
invariant for $(f')^n$ it follows from Theorem \ref{thm:trans-not-tot-trans} that
$f'$ is totally transitive provided $f'$ is transitive.

To show that $f'$ is transitive we consider the $P'$-Markov graph of $f'$,
denoted by $\mathcal{G}'$. By Lemma~\ref{lem:Markov-transitive} it is enough
to prove that $\mathcal{G}'$ is strongly connected, since it is clear that it is
not a graph of a cyclic permutation. We will say that a $P'$-basic
interval $J$ of $f'$ is an ``old'' one if $J$ is, either a $P''$-basic interval of $f$,
or $J=[x',y]$. With this nomenclature $\mathcal{G}''$ has two ``new'' vertices, that is
$P''$-basic intervals $J'=[x,y']$ and $J''=[y',x']$. Observe that
the subgraph of $\mathcal{G}'$ given by the set of old $P'$ basic intervals together with
all edges between them is isomorphic to $\mathcal{G}$. Moreover, if any old $P'$-basic
interval $f$-covered $[x,y]$, then it also $f'$-covers $[x',y]$, $J'$, and $J''$, and the last
two intervals $f'$-cover an old $P'$ basic interval $[w_0,p]$. It easily follows that
$\mathcal{G}'$ is strongly connected and thus $f'$ is transitive. The proof of the claim is completed.

To estimate the topological entropy of $f'$ we fix a  vertex $v$ of $\mathcal{G}'$ and
provide a bound on the number of paths of length $L$ in $\mathcal{G}'$ starting
at $v$.
By the definition of $f'$ and our choice of $L$ every path of length $L$ in $\mathcal{G}'$
can pass at most once through $J'$ or $J''$. Moreover, to every path $\alpha$ of length $L$
in $\mathcal{G}$ corresponds, either exactly one path in $\mathcal{G}'$ if $\alpha$
does not contain $[x,y]$, or exactly three paths in $\mathcal{G}'$ otherwise.
By Lemma \ref{lem:Markov-entropy}, there are at most $r^L$ paths of length $L$ starting at any fixed
vertex of $G$, and we conclude that there are at most $3r^L$ paths of length $L$ in
$\mathcal{G}'$ starting at $v$. Using Lemma \ref{lem:Markov-entropy} and by our choice of $s$ we have
that $h((f')^L) \le \log (3r^L) < \log s^L$, hence $h(f')< \log s <h(f)+\eps$.
\end{proof}

The last lemma shows how to construct pure mixing examples from exact Markov tree maps
with the topological entropy as small as possible.

\begin{lemma}\label{lem:pure-mix-entropy}
Let $f\colon T\defmapsto T$ be an exact Markov tree map. If $\mathcal{O}$ is a single
periodic orbit for $f$ with $m>0$ elements contained in the set of endpoints of $T$
then for every $\eps>0$, there is a pure mixing map $f'\colon T\defmapsto T$ such that
\[
h(f') < \max\{h(f),\log 3/m\}+\eps\]
and $\mathcal{O}=\Nacc(f')$ is a periodic orbit of $f'$.
\end{lemma}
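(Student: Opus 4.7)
By Theorem~\ref{thm:3-horseshoe} applied to $e_0$ as a fixed endpoint of $(f')^m$ with $(f')^{-m}(e_0)=\{e_0\}$, any pure mixing $f'$ with $\Nacc(f')=\mathcal{O}$ automatically satisfies $h(f')>\log 3/m$, so the bound in the statement is tight. The plan is to realize this bound up to $\eps$ by local modification of $f$ near each $e_i$: first flatten the preimage structure so that only $\mathcal{O}$ maps to $\mathcal{O}$, then insert a single $3$-piece tent at $e_0$ together with monotone stretching pieces at the remaining $e_i$'s that together render $\mathcal{O}$ inaccessible.

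Assume $f$ is $P$-Markov and $P$-linear with $P$ containing all vertices, in particular $\mathcal{O}\subset P$. Label $\mathcal{O}=\{e_0,\ldots,e_{m-1}\}$ with $f(e_i)=e_{i+1\bmod m}$, and for each $i$ fix a short arc $J_i=[e_i,q_i]$ of length $\delta$ inside the edge at $e_i$, identified with $[0,\delta]$. Since $f$ is Markov, the set $A=f^{-1}(\mathcal{O})\setminus\mathcal{O}$ is finite; I would first perturb $f$ in tiny neighborhoods of each $x\in A$ so that $f(x)$ is shifted slightly off $\mathcal{O}$, refining $P$ to accommodate. The resulting Markov map $\tilde f$ is still mixing, satisfies $\tilde f^{-1}(\mathcal{O})\cap(T\setminus\bigcup_i J_i)=\emptyset$, and has $h(\tilde f)<h(f)+\eps/2$.

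Next, set $f'=\tilde f$ outside $\bigcup_i J_i$. On $J_0\cong[0,\delta]$, insert a $3$-piece piecewise linear tent ascending from $(0,0)$ to $(a_1,H)$ with $H$ slightly larger than $\delta$, descending to $(a_2,h)$ with $0<h<H$, and ascending again to $(\delta,\tilde f(q_0))$; this forces $(f')^{-1}(e_1)\cap J_0=\{e_0\}$ and $f'(J_0)\supset J_1$. On each $J_i$ with $i\ge 1$, take $f'$ to be a single linear piece from $(0,0)$ to $(\delta,\tilde f(q_i))$ with slope large enough that $f'(J_i)\supset J_{i+1}$ and $(f')^{-1}(e_{i+1})\cap J_i=\{e_i\}$. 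By construction $(f')^{-1}(e_i)=\{e_{i-1}\}$ for every $i$, so each $e_i$'s unique side is inaccessible; Lemma~\ref{lem:char-nacc-side} gives $\mathcal{O}\subseteq\Nacc(f')$, and Theorem~\ref{thm:nacc}(3) together with the preimage structure forces equality. Mixing of $f'$ follows from that of $\tilde f$ combined with the stretching on the tents, and $f'$ is not exact since $\Nacc(f')\ne\emptyset$.

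To bound $h(f')$, I would analyze the Markov graph $\mathcal{G}'$ of $f'$ on the refined partition $P'$ including the tent turning points. It decomposes into an ``outside'' subsystem inherited from $\tilde f$ with entropy at most $h(f)+\eps/2$, and a cyclic subsystem around $\mathcal{O}$ containing three basic intervals on $J_0$ and one on each other $J_i$. Because the monotone pieces on $J_i$ ($i\ge 1$) have out-degree one and the $3$-tent at $J_0$ has out-degree three, the $m$-th power of the cyclic incidence matrix has branching factor exactly three per full loop, so by Lemma~\ref{lem:Markov-entropy} the cyclic contribution to $h(f')$ is close to $\log 3/m$. Assembling gives $h(f')<\max\{h(f),\log 3/m\}+\eps$. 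The main technical obstacle lies in the final assembly: one must choose $H$, $h$, and the monotone slopes so that the cyclic subsystem couples only weakly with the outside part (otherwise extra cross-edges in $\mathcal{G}'$ may increase the combined spectral radius), which forces $\delta$ to be small compared to the minimum basic-interval length of $\tilde f$ and the slopes to be tuned so that $f'(J_i)$ covers $J_{i+1}$ plus at most one neighboring basic interval.
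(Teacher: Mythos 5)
There is a fatal structural flaw in your construction: the map $f'$ you build is piecewise linear with \emph{finitely} many laps (a $3$-piece tent on $J_0$, one linear piece on each $J_i$, and the finite Markov map $\tilde f$ elsewhere), and no such map can be pure mixing. Indeed, your own opening observation backfires: Theorem~\ref{thm:3-horseshoe} applied to $(f')^m$ at the fixed endpoint $e_0$ with $((f')^m)^{-1}(e_0)=\{e_0\}$ forces $e_0$ to be an \emph{accumulation point of fixed points} of $(f')^m$. But near $e_0$ your $(f')^m$ is a single linear piece $x\mapsto\lambda x$ (the product of the initial slopes at $e_0,e_1,\dots,e_{m-1}$): if $\lambda\neq 1$ then $e_0$ is an isolated fixed point, and if $\lambda=1$ then a one-sided neighborhood of $e_0$ is $(f')^m$-invariant, contradicting transitivity. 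Equivalently, Theorem~\ref{thm:3-horseshoe-kappa}(2) says some iterate of a pure mixing map has infinitely many fixed points, and Corollary~\ref{cor:Yokoi-cor} says a mixing piecewise monotone graph map must be exact. So your $f'$ either fails to be mixing or fails to have $\Nacc(f')\neq\emptyset$; the Markov-graph entropy count in your last paragraph is moot because the object being analyzed does not exist. (A secondary, independent gap: the preliminary perturbation making $f^{-1}(\mathcal{O})=\mathcal{O}$ while preserving the Markov property, exactness, and the entropy bound is asserted without argument and is not routine.)

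This is precisely why the paper's proof cannot stay within the finite Markov category. It attaches a fresh arc $[a_j^\infty,o_j]$ at each $o_j\in\mathcal{O}$ (the new tree is homeomorphic to $T$, and the new endpoints $a_j^\infty$ become the inaccessible orbit, so uniqueness of preimages comes for free rather than by perturbing $f$), and on the last attached arc it builds a countably infinite ``accordion'' of linear pieces accumulating at $a_{m-1}^\infty$; this produces the required infinitely many fixed points of $(f')^m$ near the inaccessible endpoints while keeping the branching per full loop around the orbit close to $3$. The entropy bound is then obtained not from a single finite incidence matrix but by approximating $f'$ by a sequence of finite linear Markov maps $f'_j$, bounding $h(f'_j)$ by path counting, and invoking lower semicontinuity of topological entropy to pass to the limit. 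If you want to salvage your approach, you must replace the $3$-piece tent at $e_0$ (or wherever the inaccessible orbit sits) by such an infinite piecewise linear structure and add the approximation argument for the entropy estimate.
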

\begin{proof} Fix any $\eps>0$. Let $\mathcal{O}$ be a single periodic orbit of $f$
with $m>0$ elements contained in the set of endpoints of $T$. Set
$\eta=\max\{h(f),\log3/m\}$.
Choose $r,s>1$ such that $\eta<\log r<\log s<\eta+\eps$,
and fix $L>0$ such that $3r^L < s^L$.
Take any $P\subset T$ such that $f$ is Markov
with respect to $P$. Choose any $o_0\in\mathcal{O}$ and for $j=1,2,\ldots,m-1$ put
$o_j=f^j(o_0)$, and let $I_j=[o_j,z_j]$ denote the $P$ basic interval containing the
endpoint $o_j$.  Reasoning as in the proof of
Lemma~\ref{lem:tot-trans-entropy} we can find a $P$-basic interval $[x,y]$ such that
$f(x)=o_0$ and $x\neq o_{m-1}$. Since $f$ is exact, it cannot collapse any of intervals $I_j$, and
thus $[x,y]\neq I_j$ for any $j$. We order
$[x,y]$ such that $x<y$ and we choose two points $x'$, $y'$ in $[x,y]$ with
$x<y'<x'<y$.  We extend the tree $T$ by attaching an interval $[a_j^\infty,o_j]$ at each
$o_j\in \mathcal{O}$ such that for each $j=0,\ldots,m-1$ there is an isometry
$\varphi_j\colon [a_j^\infty,o_j]\defmapsto [a_{j+1}^\infty,o_{j+1}]$ (here we agree that
$o_m=o_0$, and $a_m^\infty=a_0^\infty$). Let us order each interval $[a_j^\infty,o_j]$
such that $a_j^\infty<o_j$. We obtained a new tree $T'$, which is homeomorphic with the
original $T$, and $T\subset T'$. Now, to finish the proof it is enough to construct
$f'\colon T'\defmapsto T'$ such that the entropy bounds hold and
$\mathcal{O}'=\{a_0^\infty,\ldots,a_{m-1}^\infty\}$ is a non-accessible periodic orbit
of $f'$. First set $f'(x)=f(x)$ for each $x\in T\setminus [x,y]\subset T'$. Observe
that there is $u_0\in I_0$ such that for $j=1,\ldots,mL$ and $l=j\mod m$ we have
\[
f'([o_j,(f')^j(u_0)])\subset I_l,\qquad\text{and}\qquad (f')^{mL}(u_0)=z_0.
\]
Next, we choose a sequence $\{a_j\}_{j=0}^\infty\subset[a_{m-1}^\infty,o_{m-1}]$
such that for each $j=1,2,\ldots$ we have
\[
a_j<a_{j-1}
\qquad\text{and}\qquad
a_0=o_{m-1}
\qquad\text{and}\qquad
\lim_{j\to+\infty}a_j=a_{m-1}^\infty.
\]
Set $\alpha_j=\varphi_{m-1}(a_j)$ for $j=0,1,\ldots$, and find infinite sequences
$\{b_j\}_{j=1}^\infty$, $\{c_j\}_{j=1}^\infty$, $\{d_j\}_{j=1}^\infty$, and
$\{e_j\}_{j=1}^\infty$ such that for each $j=1,2,\ldots$ we have
\[
a_j< c_{2j-1} < b_j < c_{2j} < e_{2j} < d_j < e_{2j-1} < a_{j-1}.
\]
For $k=0,\ldots,m-2$ and $x\in [a_k^\infty,o_k]$ set $f'(x)=\varphi_k(x)$.
For $j=\infty$ put $f'(a_{j})=\alpha_{j}$ and for each $j=1,2,\ldots$ set
\[
f'(a_{j})=\alpha_{j},\qquad f'(c_{2j-1})=f'(c_{2j})=\alpha_{j-1},\qquad
f'(e_{2j-1})=f'(e_{2j})=\alpha_j,
\]
and extend $f'$ linearly to $[a_{j},c_{2j-1}]$, $[c_{2j},e_{2j}]$, and
$[e_{2j-1},a_{j-1}]$. Now for each $j=1,2,\ldots$ there are points
$u_{j}\in [\alpha_j,\alpha_{j-1}]$, and $w_{j-1}\in[\alpha_{j},\alpha_{j-1}]$ such that
the following conditions hold:
\begin{description}
\item[$(\star)$] for each $k=1,\ldots, L$ we have
\[
(f')^{mk-1}([\alpha_{j},u_{j}])\subset [a_j,c_{2j-1}],
\qquad\text{and}\qquad
(f')^{mk-1}([w_{j-1},\alpha_{j-1}])\subset [e_{2j-1},a_{j-1}],
\]
hence $(f')^i$ is well defined on intervals $[\alpha_j,u_j]$, $[w_{j-1},\alpha_{j-1}]$ for
$i=1,\ldots,mL-1$
\item[$(\star\star)$] we have
\[
(f')^{mk-1}(u_{j})=c_{2j-1},
\qquad \text{and}\qquad
(f')^{mk-1}(w_{j-1})=e_{2j-1}.
\]
\end{description}
We have defined two sequences $\{u_j\}_{j=0}^\infty$ and $\{w_j\}_{j=0}^\infty$.
For $j=1,2,\ldots$ we define
\[
f'(b_j)=u_{j-1}\qquad \text{and}\qquad f'(d_j)=w_j,
\]
and extend $f'$ linearly to $[c_{2j-1},b_j]$, $[b_j.c_{2j}]$, $[e_{2j},d_j]$, and
$[d_j,e_{2j-1}]$. Finally, we set
$f'(x')=o_0$, $f'(y')=w_0$, and extend $f'$ linearly to $[x,y']$ and $[y',x']$.
We identify $[x',y]$ with $[x,y]$ by a linear homeomorphism $\psi$ such that
$\psi(x')=x$ and for $z\in[x',y]$ we define $f'(z)=f(\psi(z))$. Then $f'$ is a
continuous map of $T'$. Reasoning as in \cite{HK} one gets that $f'$ is a pure
mixing map, and it is clear that $\mathcal{O}'=\{a_0^\infty,\ldots,a_{m-1}^\infty\}$
is a non-accessible periodic orbit of $f'$.


To estimate the topological entropy of $f'$, for each $j=1,2,\ldots$ we define
a sequence of maps $\{f'_j\}_{j=1}^\infty$ such that
$h(f'_j)\to h(f')$ as $j\to\infty$, and for each $j=1,2,\ldots$ we
have $h(f'_j) \le \eta+\eps$.
To this end set $Q_0^j=[\alpha_{j},\alpha_{j-1}]$,
and $Q_k^j=\varphi_{k-1}(Q_{k-1}^j)$.
Let
\[
\Omega_k^j=\bigcup_{l=1}^j Q_k^l
\qquad\text{and}\qquad
S_j=\bigcup_{l=1}^j\bigcup_{k=0}^{m-1}Q_k^l.
\]

Next, we define a sequence 
of linear Markov maps $f'_j\colon T'\defmapsto T'$ by
\[
f'_j(x)=\begin{cases}f'(x),&\text{for }x\in T\cup S_j,\\
\varphi_k(x),& \text{if } x\in [a_k^\infty,o_k]\setminus \Omega_k^j \text{ for some }k=0,\ldots,m-1.
\end{cases}
\]


Let $X_j=\{x\in T' : (f')^n(x)\in T\cup S_{j} \text{ for }n=0,1,\ldots\}$.
Observe that $f'|_{X_j}=f'_j|_{X_j}$ and $h(f'_j)=h(f'_j|_{X_j})$.
As $h(f'|_{X_j})\le h(f')$ and $X_j\subset X_{j+1}$ we also have
$h(f'_{j})\le h(f'_{j+1})$, and we get
\[
\limsup_{j\to\infty} h(f'_j) = \sup h(f'|_{X_j}) \le h(f').
\]
Since $f'_j$ converges uniformly to $f'$ on $T'$, and the entropy function
is lower semicontinuous, we get
\[
h(f')\le \liminf_{j\to\infty} h(f_j).
\]
To finish the proof
we need to show that $h(f'_j) \le \eta+\eps$ for every
$j=1,2,\ldots$. But by the way $f'_j$ is defined it is a linear Markov map on $T\cup S_j$, so
Lemma \ref{lem:Markov-entropy} applies,  and the upper bound can be obtained
by counting paths in the Markov graph of $f'_j$ in a similar way as in the proof of Theorem~\ref{lem:tot-trans-entropy}.
The details are left to the reader.
\end{proof}

\section{Examples}

In this section we construct a few examples of pure mixing graph maps
which will prove that the lower bounds for $\inf(h(\pure{G}))$ derived from Corollary \ref{cor:inf-pure}
are in some cases equal to the infimum, hence we solve our main Problem
in these cases.

Recall that an $n$-star is a tree $T_n=([0,1],0)^{\wedge n}$, where $n\ge 1$.

\begin{lemma}\label{lem:n-star-entropy}Let $n>1$ and let
$T_n$ be a star with $n$ endpoints. Then for
every $\eps>0$, there is
\begin{enumerate}
\item an exact Markov map $F_\eps\colon T_n\defmapsto T_n$ such that
$(\log 3)/n\le h(F_\eps)<(\log 3)/n+\eps$.
\item a pure mixing map $G_\eps\colon T_n\defmapsto T_n$ such that
$(\log 3)/n\le h(G_\eps)<(\log 3)/n+\eps$.
\end{enumerate}
Moreover, all endpoints of $T_n$ form a single periodic orbit of $F_\eps$ and $G_\eps$.
\end{lemma}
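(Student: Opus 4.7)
The plan is to produce both $F_\eps$ and $G_\eps$ from a common interval model. Start with the symmetric triple-tent $f_0\colon[0,1]\defmapsto[0,1]$ defined by $f_0(x)=3x$ on $[0,1/3]$, $f_0(x)=2-3x$ on $[1/3,2/3]$, $f_0(x)=3x-2$ on $[2/3,1]$; this is transitive (indeed exact), Markov and linear, and satisfies $f_0(0)=0$, $f_0(1)=1$ and $h(f_0)=\log 3$. Identifying $T_n$ with $([0,1],0)^{\wedge n}$, Lemma~\ref{lem:wedge-entropy} will produce a Markov linear, transitive (but not totally transitive) map $F'\colon T_n\defmapsto T_n$ with $h(F')=h(f_0)/n=(\log 3)/n$; because $f_0(1)=1$, the $n$ copies of $1$, which are exactly the endpoints of $T_n$, will form a single $F'$-periodic orbit of period $n$.

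For assertion (1), I would apply Lemma~\ref{lem:tot-trans-entropy} to $F'$ with some parameter $\eps'<\eps$, obtaining an exact Markov linear map $F_\eps\colon T_n\defmapsto T_n$ with $h(F_\eps)<h(F')+\eps'<(\log 3)/n+\eps$ whose endpoint cycle is preserved by the ``Furthermore'' clause of that lemma. The matching lower bound $h(F_\eps)\ge h(F')=(\log 3)/n$ should follow from the observation that the modification in the proof of Lemma~\ref{lem:tot-trans-entropy} only adds new vertices and new covering relations to the Markov graph (it never removes any cover already present), so the incidence matrix of $F_\eps$ dominates that of $F'$ entrywise, and Lemma~\ref{lem:Markov-entropy} then prevents the entropy from dropping. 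For assertion (2), I would feed $F_\eps$ into Lemma~\ref{lem:pure-mix-entropy} with $\mathcal{O}$ taken to be the full set of endpoints (so $m=n$); since the resulting tree is homeomorphic to $T_n$, this yields a pure mixing map $G_\eps\colon T_n\defmapsto T_n$ with $\Nacc(G_\eps)=\mathcal{O}$ and $h(G_\eps)<\max\{h(F_\eps),(\log 3)/n\}+\eps'=h(F_\eps)+\eps'$, which can be brought below $(\log 3)/n+\eps$ by suitably shrinking the two perturbation parameters.

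For the matching lower bound $h(G_\eps)\ge(\log 3)/n$, I would exploit the special position of $\Nacc(G_\eps)$: since its points are already endpoints of $T_n$, the Structure Theorem~\ref{thm:structure} applies trivially with $G'=T_n$ and $g=G_\eps$, so $G_\eps^{-1}(e)=\{e'\}$ for each $e\in\mathcal{O}$, where $e'$ is the cyclic predecessor (this can alternatively be read directly off the explicit construction in Lemma~\ref{lem:pure-mix-entropy}). Iterating, $G_\eps^{-n}(e)=\{e\}$, and since $G_\eps^n$ is transitive (as $G_\eps$ is mixing), Theorem~\ref{thm:3-horseshoe} applied to $G_\eps^n$ at the fixed endpoint $e$ forces $h(G_\eps^n)>\log 3$, hence $h(G_\eps)>(\log 3)/n$. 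The most delicate point in the whole plan is the lower bound $h(F_\eps)\ge(\log 3)/n$, since Lemma~\ref{lem:tot-trans-entropy} is stated only with an upper estimate on entropy; I would need to reinspect its proof carefully to confirm that the Markov graph is genuinely only enlarged and never pruned, so that the spectral radius (and therefore the entropy) cannot drop below $h(F')$.
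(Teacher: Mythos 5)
Your construction is exactly the paper's: the base map $f_0(x)=|1-|1-3x||$ is the same triple tent, and the chain Lemma~\ref{lem:wedge-entropy} $\to$ Lemma~\ref{lem:tot-trans-entropy} $\to$ Lemma~\ref{lem:pure-mix-entropy} is precisely the route the paper takes, so the proposal is correct and essentially identical. The one point you flag as delicate is fine: the proof of Lemma~\ref{lem:tot-trans-entropy} explicitly notes that the subgraph of $\mathcal{G}'$ spanned by the old $P'$-basic intervals is isomorphic to $\mathcal{G}$, so the incidence matrix is only enlarged and the entropy cannot drop below $(\log 3)/n$.
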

\begin{proof}First observe that
for each $n\ge 2$ there is a transitive Markov map $f_n$ of an $n$-star $T_n$ such that
all endpoints of $T_n$ form a single cycle for $f_n$ and $h(f_n)=\log 3/n$.
It is a consequence of Lemma \ref{lem:wedge-entropy} applied to the
$3$-horseshoe map given by $f(x)=|1-|1-3x||$ for $0\le x\le 1$. Then we can apply
Lemmas \ref{lem:tot-trans-entropy} and \ref{lem:pure-mix-entropy} to finish the proof.
\end{proof}

Let $T$ be a tree. We say that a point $p\in T$ is a \emph{central root} of $T$ if
both connected components of $T\setminus\{p\}$ are homeomorphic to each other. The full binary tree $B_n$ with $2^n$ endpoints can be defined inductively. Let $B_1=[0,1]$. Note that $1/2\in [0,1]$
is the central root of $B_1$.
Assume that we have defined
$B_n$ and $z_n\in B_n$ is a central root of $B_n$. Let $T'=(B_n,z_n) \wedge ([0,1],0)$ 
and let $z_{n+1}$ denote the endpoint $1\in T'$. We define $B_{n+1}= (T',z_{n+1})^{\wedge 2}$.
Clearly, $z_{n+1}\in B_{n+1}$ is a central root of $B_{n+1}$ and thus $B_{n+1}$ has $2^{n+1}$ endpoints.

\begin{lemma}\label{lem:binary-tree-entropy}
Let $n\ge 1$ and let $B_n$ be a complete binary tree with $2^n$ endpoints. Then for
every $\eps>0$, there is
\begin{enumerate}
\item an exact Markov map $F_\eps\colon B_n\defmapsto B_n$ such that
$(\log 3)/2^n\le h(F_\eps)<(\log 3)/2^n+\eps$ and there is a
fixed point of $F_\eps$ which is a central root for $B_n$.
\item a pure mixing map $G_\eps\colon B_n\defmapsto B_n$ such that
$(\log 3)/2^n\le h(G_\eps)<(\log 3)/2^n+\eps$.
\end{enumerate}
Moreover, all endpoints of $B_n$ form a single periodic orbit of $F_\eps$ and $G_\eps$.
\end{lemma}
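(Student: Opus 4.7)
The plan is to prove both statements simultaneously by induction on $n$, using the recursive structure $B_{n+1}=(T',z_{n+1})^{\wedge 2}$ with $T'=(B_n,z_n)\wedge([0,1],0)$ to reduce the $(n+1)$-step to a sequence of applications of the previous lemmas. I would also carry the auxiliary fact that $z_n$ is fixed and that the $2^n$ endpoints of $B_n$ form a single periodic orbit, since both are needed to drive the induction.

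For the base case $n=1$ I would identify $B_1=[0,1]$ with $([0,1/2],1/2)^{\wedge 2}$ and start from the $3$-horseshoe on $[0,1/2]$, a transitive Markov map fixing both $0$ and $1/2$ and having entropy $\log 3$. Lemma~\ref{lem:wedge-entropy} with basepoint $1/2$ and $m=2$ produces a transitive Markov map on $B_1$ of entropy exactly $(\log 3)/2$, with $1/2$ as unique fixed point and $\{0,1\}$ as a period-$2$ orbit. Lemma~\ref{lem:tot-trans-entropy} upgrades it to an exact Markov map $F_\eps$ with $h(F_\eps)<(\log 3)/2+\eps$, keeping $1/2$ fixed (the modification there is localised to a basic interval disjoint from the unique fixed point) and preserving the endpoint cycle $\{0,1\}$. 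For the pure mixing variant, Lemma~\ref{lem:pure-mix-entropy} applied to $F_\eps$ with $\mathcal{O}=\{0,1\}$ delivers $G_\eps$ with $\mathcal{O}=\Nacc(G_\eps)$.

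For the inductive step, fix $\eps'=\eps/3$ (up to further shrinking). The induction hypothesis gives an exact Markov map $\tilde F\colon B_n\to B_n$ with $h(\tilde F)<(\log 3)/2^n+\eps'$, $z_n$ fixed, and all $2^n$ endpoints forming a single $\tilde F$-cycle $C$. Theorem~\ref{thm:edge-adding} applied at the fixed point $z_n$ with the attached arc $[0,1]$ produces a transitive Markov map $\tilde F'\colon T'\to T'$ with $h(\tilde F')<h(\tilde F)+\eps'$, both ends $z_n$ and $z_{n+1}$ fixed, and (by clause $(\star)$ of that theorem) the cycle $C$ preserved. Lemma~\ref{lem:wedge-entropy} with $X=T'$, $x_0=z_{n+1}$ and $m=2$ then yields a transitive (but not totally transitive) Markov map $H$ on $B_{n+1}$ with $h(H)=h(\tilde F')/2$, $z_{n+1}$ as unique fixed point, and the cycle $C$ spread into a single periodic orbit of length $2\cdot 2^n=2^{n+1}$ which, by the endpoint count of $B_{n+1}$, is precisely its set of endpoints. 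A final application of Lemma~\ref{lem:tot-trans-entropy} converts $H$ to an exact Markov map $F_\eps$ with $h(F_\eps)<h(H)+\eps'$, still preserving both the endpoint cycle (again by $(\star)$) and the unique fixed point $z_{n+1}$; combining the bounds gives $h(F_\eps)<(\log 3)/2^{n+1}+\eps$. For the pure mixing map $G_\eps$ I apply Lemma~\ref{lem:pure-mix-entropy} to $F_\eps$ with $\mathcal{O}$ the endpoint cycle, whose length $m=2^{n+1}$ makes $\max\{h(F_\eps),\log 3/m\}$ no larger than $h(F_\eps)$, so the entropy bound transfers.

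The lower bound $(\log 3)/2^n\le h(F_\eps)$ is essentially built into the construction: the wedge step fixes the entropy to $(\log 3)/2^n$ up to error, while Lemma~\ref{lem:tot-trans-entropy}'s modification only enlarges the Markov graph (it adds new basic intervals whose covering relations include all old ones), so its spectral radius, and hence $h(F_\eps)$, cannot drop below $h(H)$. The pure mixing case then admits the even cleaner lower bound $(\log 3)/\nadler(B_n)\le h(G_\eps)$ from Theorem~\ref{thm:3-horseshoe-kappa}, which is at least $(\log 3)/2^n$. The main obstacle is not any single hard step but the bookkeeping: at each stage of the induction I have to verify simultaneously that (i) the relevant central root remains the unique fixed point, (ii) the endpoint cycle survives with the right length, and (iii) the accumulated entropy error stays under $\eps$; once the four lemmas are composed in the order above, each of these is guaranteed by a matching clause in the hypotheses of the tool being applied.
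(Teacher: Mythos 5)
Your proof follows essentially the same route as the paper's: induction on $n$, with base case a transitive non-totally-transitive interval map of entropy $\log\sqrt{3}$ whose endpoints form a $2$-cycle (the paper writes it by an explicit formula, you build it via Lemma~\ref{lem:wedge-entropy}; this is cosmetic), inductive step composed of Theorem~\ref{thm:edge-adding} at the central-root fixed point, Lemma~\ref{lem:wedge-entropy} with $m=2$, and Lemma~\ref{lem:tot-trans-entropy}, with the same $\eps/3$ bookkeeping, and part (2) obtained from Lemma~\ref{lem:pure-mix-entropy}. The one slip is your justification of the lower bound for $G_\eps$: since the $2^n$ endpoints of $B_n$ form the largest non-disconnecting set, $\nadler(B_n)=2^n+1$, so Theorem~\ref{thm:3-horseshoe-kappa} as quoted only yields $h(G_\eps)>\log 3/(2^n+1)$; the bound $\log 3/2^n$ instead follows because $\Nacc(G_\eps)$ is a single $2^n$-cycle of endpoints, so after lifting by Theorem~\ref{thm:structure} one has $g^{-2^n}(e)=\{e\}$ and Theorem~\ref{thm:3-horseshoe} gives $h(g^{2^n})>\log 3$ (your Markov-subgraph monotonicity argument for the lower bound on $F_\eps$ is fine, and the paper itself leaves both lower bounds implicit).
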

\begin{proof}
First note that the second part of the theorem
follows from the first and Lemma \ref{lem:pure-mix-entropy}.
We will prove the first part by induction on $n$.
For $n=1$, note that $B_1=[0,1]$, and
consider the piecewise linear Markov map $f\colon [0,1]\defmapsto [0,1]$ given by
\[f(x)=
\begin{cases}
1-3x,&\text{for }0\le x<1/6,\\
3x,&\text{for }1/6\le x<1/3,\\
2-3x,&\text{for }1/3\le x<1/2,\\
1-x,&\text{for }1/2\le x\le 1.
\end{cases}
\]
It is clear that $f$ is transitive, but not totally transitive, and
$h(f)=\log\sqrt{3}$. Given any $\eps>0$ we can apply Lemma \ref{lem:tot-trans-entropy}
to get a piecewise linear Markov and exact map $F_\eps\colon B_1\defmapsto B_1$.
Note that $z=1/2$ is a
fixed point of $F_\eps$ which is a central root for $B_2$ and the endpoints of $[0,1]$
form a two cycle for $F_\eps$.

Now assume that the theorem holds for $n\ge 1$, fix an $\eps>0$,
and let $G\colon B_n\defmapsto B_n$ be an exact map provided by
induction hypothesis for $\eps/3$. Let $z$ be a fixed point of
$G$ which is a central root for $B_n$. Apply Theorem
\ref{thm:edge-adding} to obtain an exact piecewise linear
Markov map $G'\colon T'\defmapsto T'$ with $h(G')<h(G)+\eps/3$,
where  $T'$ is $B_n$ with an arc
$[z,z_0]$ attached to $z$, that is, $T'=(B_n,z)\wedge([z,z_0],z)$.
Note that the $2^n$ endpoints of $T'$ other than $z_0$ form a cycle for $G'$
and $z$ and $z_0$ are fixed
for $G'$. Observe that the $2$-nd wedge power $(T',z_0)^{\wedge 2}$
is just $B_{n+1}$. Now we can apply Lemma \ref{lem:wedge-entropy} to
get a transitive map $G''\colon B_{n+1}\defmapsto B_{n+1}$ with
$h(G'')=h(G')$. Applying Lemma \ref{lem:tot-trans-entropy} to
$G''$ and $\eps/3$ we get the map $F_\eps\colon B_{n+1}\defmapsto B_{n+1}$
with all desired properties.
\end{proof}

A $\sigma$-graph, $\theta$-graph, 8-graph 
are spaces homeomorphic to the symbol representing the Greek letter sigma, theta,
and the figure eight, respectively. A dumbbell is a graph homeomorphic to the
following subset of a complex plane $\mathbb{C}$: $C_{-2}\cup I\cup C_2$, where
$C_{\omega}=\{z\in\mathbb{C}:|z-\omega|=1$\} and $I$ is a line segment joining $z=-1$
with $z=1$.

\begin{theorem}Let $\pure{G}$ denote the family of all pure mixing maps of a given
graph $G$.
\begin{enumerate}
\item \label{ent-1} If $T_n$ is an $n$-star, $n\ge 2$, then
\[
\inf(h(\pure{T_n}))=\log 3 / n.
\]
\item \label{ent-2}
If $B_n$ is a full binary tree with $2^n$ endpoints, $n\ge 1$, then
\[
\inf(h(\pure{B_n}))=\log 3 / 2^n.
\]
\item \label{ent-3}
If $G_\sigma$ is a sigma graph, then
\[
\inf(h(\pure{G_\sigma}))=\log 3 / 2.
\]
\item \label{ent-4}
If $G_\theta$ is a theta graph, then
\[
\log 3/ 4 \le \inf(h(\pure{G_\theta})) \le \log 3/3.
\]
\item \label{ent-5}
Let $G_8$ be a figure-eight graph, and $G_d$ be the dumbbell graph, then
\[
\inf(h(\pure{G_8}))=\inf(h(\pure{G_d}))=\log 3 / 4.
\]
\end{enumerate}
\end{theorem}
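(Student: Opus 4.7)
My plan is to prove each of the five parts by combining a lower bound from the refined form of Corollary~\ref{cor:inf-pure} that is implicit in the proof of Theorem~\ref{thm:3-horseshoe-kappa} with an upper bound realized by an explicit construction built from Lemmas~\ref{lem:n-star-entropy}, \ref{lem:binary-tree-entropy}, and \ref{lem:pure-mix-entropy}.

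For the lower bounds, the proof of Theorem~\ref{thm:3-horseshoe-kappa} in fact gives, for any pure mixing $f$ on $G$, a period $0<m<\nadler(G)$ with $h(f^m)>\log 3$, hence $h(f)>\log 3/(\nadler(G)-1)$. Using the Nadler formula one computes $\nadler(T_n)-1=n$, $\nadler(B_n)-1=2^n$, and $\nadler(G)-1=4$ for $G\in\{G_\theta,G_8,G_d\}$, matching the claimed lower bounds in parts~(1), (2), (4) and~(5). For part~(3) one has $\nadler(G_\sigma)-1=3$, which yields only $\log 3/3$; to upgrade this to $\log 3/2$ I will apply Theorem~\ref{thm:structure} to a pure mixing $f$ on $G_\sigma$ and enumerate the inaccessible-side configurations on the sigma graph. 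A $3$-cycle in $\Nacc(g)$ cannot occur: it would either arise from three inaccessible sides at the unique branching point of $G_\sigma$ (which disconnects $G'$, contradicting that it is a graph), or from three distinct points in $\Nacc(f)$ cyclically permuted by $f$, in which case the pairing induced by $\pi\colon G'\to G_\sigma$ (which merges only the sides at each inaccessible point) is incompatible with any $3$-cycle and the quotient map fails to be well defined. Hence the longest cycle in $\Nacc(g)$ has length at most $2$ and $h(f)>\log 3/2$.

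For the upper bounds, parts~(1) and~(2) are immediate from Lemma~\ref{lem:n-star-entropy}(2) and Lemma~\ref{lem:binary-tree-entropy}(2). For $G_\theta$ I will apply Lemma~\ref{lem:n-star-entropy} with $n=3$ to get an exact Markov map on $T_3$ whose three endpoints form a single $3$-cycle, apply Lemma~\ref{lem:pure-mix-entropy} with $\mathcal O$ the $3$-cycle, and identify the three inaccessible endpoints to one point; the quotient of the Y-shape is $G_\theta$ and the induced map is pure mixing with entropy close to $\log 3/3$. For $G_8$ the same scheme with $n=4$ on $T_4$ together with identification of opposite pairs of the cyclic $4$-cycle of endpoints yields pure mixing on $G_8$ with entropy close to $\log 3/4$. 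For $G_d$ the same scheme based on Lemma~\ref{lem:binary-tree-entropy} applied to $B_2$ and identification of the two endpoints inside each copy of $T'$ yields pure mixing on the dumbbell with entropy close to $\log 3/4$; compatibility of the wedge-power cycle with the chosen pairing is a direct calculation in each case.

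For part~(3), start from Lemma~\ref{lem:n-star-entropy} with $n=2$ on $T_2=[0,1]$ to obtain an exact Markov map whose endpoints form a $2$-cycle and with entropy close to $\log 3/2$; then use Theorem~\ref{thm:edge-adding} to attach an arc at an accessible fixed interior point, preserving the $2$-cycle by property~$(\star)$; apply Lemma~\ref{lem:pure-mix-entropy} with $\mathcal O$ the surviving $2$-cycle to get a pure mixing map on a tree homeomorphic to $T_3$ whose $\Nacc$ is that $2$-cycle and whose entropy is again close to $\log 3/2$. Identifying the two inaccessible endpoints collapses the Y-shape to $G_\sigma$, the identified point inherits two inaccessible sides permuted by $f^*$, and the induced map is pure mixing of the same entropy. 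The main obstacle lies in the enumeration for the improved lower bound on $G_\sigma$: every configuration of inaccessible sides on the sigma graph must be examined for both connectedness of $G'$ and compatibility of its cycle structure with the $\pi$-pairing. The remaining identifications in the upper-bound constructions reduce to routine compatibility checks of the specific cycles with the prescribed endpoint pair-identifications, using the explicit wedge-power formula.
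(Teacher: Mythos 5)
Your proposal is correct and follows essentially the same route as the paper: the identical identification constructions built on Lemmas~\ref{lem:n-star-entropy}, \ref{lem:binary-tree-entropy}, \ref{lem:pure-mix-entropy} and Theorem~\ref{thm:edge-adding} for the upper bounds, and the refined bound $h(f)>\log 3/(\nadler(G)-1)$ extracted from the proof of Theorem~\ref{thm:3-horseshoe-kappa} (which the paper indeed uses implicitly, since Corollary~\ref{cor:inf-pure} as stated only yields $\log 3/\nadler(G)$) for the lower bounds. Your exclusion of a $3$-cycle of inaccessible sides on $G_\sigma$ is the same idea as the paper's observation that the inaccessible sides must form a cycle of length at most two, although the cleanest way to rule out three distinct inaccessible points is simply that $\#\Nacc(f)<\disc(G_\sigma)=3$ by Theorem~\ref{thm:nacc}, rather than an appeal to the pairing induced by $\pi$.
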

\begin{proof}Part \eqref{ent-1}  and \eqref{ent-2} follow
from Lemmas~\ref{lem:n-star-entropy} and Lemma \ref{lem:binary-tree-entropy},
respectively.

To prove \eqref{ent-3} we fix $\eps>0$ and take
an exact map of the interval with the endpoints forming a
cycle of length two and entropy smaller than $\log\sqrt{3}+\eps/2$. By
Theorem~\ref{thm:edge-adding} there is an exact Markov map on the $3$-star,
with the entropy smaller than $\log\sqrt{3}+\eps$ and two out of three
endpoints of the $3$-star form a cycle for that map. By
Lemma \ref{lem:pure-mix-entropy} we can find a pure mixing map of the
$3$-star with the entropy smaller than $\log\sqrt{3}+\eps$ and two out of three
endpoints of the $3$-star still form a cycle which is inaccessible for that map.
Identifying those two endpoints we get a pure mixing map of the sigma graph
with topological entropy at most $\log\sqrt{3}+\eps$. On the other hand
it is easy to see that a pure mixing map of the sigma graph can have at most
two inaccessible sides since by Corollary~\ref{cor:sides-function} they have to form a cycle.Therefore its topological entropy is greater than
$\log\sqrt{3}$.

To see \eqref{ent-4} fix $\eps>0$ and
take a pure mixing map of the $3$-star $T_3$ with topological entropy
smaller than $\log\sqrt{3}/3+\eps$ for which endpoints of $T_3$ form an
inaccessible three cycle. Then we identify these endpoints to get a theta graph,
and the proof of the upper bound for the infimum is complete. The lower
bound comes from Theorem \ref{thm:3-horseshoe-kappa}.

The last point,
\eqref{ent-5} follows from Lemma \ref{lem:binary-tree-entropy} and
\ref{lem:n-star-entropy}, respectively.
To see this observe that identifying endpoints in the binary tree $B_2$ or $4$-star in the
appropriate way we get the dumbbell graph, and the figure-eight graph, respectively.
\end{proof}

We conjecture that the upper bound in the \eqref{ent-4} above is actually the
infimum, that is,  $\inf(h(\pure{G_\theta})) = \log 3/3$.
Note that the bound from Corollary \ref{cor:inf-pure} is not always
the best possible. It is an interesting question to find the formula
for $\inf(h(\pure{G}))$ depending on the combinatorial structure of $G$.




\section{Mixing implies specification property for graph maps}
In this section we present an alternative proof of the fact that every mixing
graph map has the specification property. The result was originally proved by
A.~Blokh. Our approach extends ideas of the proof presented by J.~Buzzi in
\cite{Buzzi} in the context of compact interval.
In order to carry out with the demonstration, we recall some terminology.
\begin{definition}
Let $n>0$ be an integer and let $\eps>0$. The \emph{closed Bowen ball} is the
set
\[
B_n(x,\eps)=\set{y\in G \; : \; \rho(f^i(x),f^i(y))\le\eps \textrm{ for } i=0,\ldots, n}.
\]
By $B_n'(x,\eps)$ we denote the connected component of $B_n(x,\eps)$ containing $x$.
\end{definition}

\begin{lemma}\label{lem:buzzi-main}
Let $f$ be a mixing map of a graph $G$. Assume that $\alpha,\eps>0$ are positive real
numbers. Then there are an integer $N=N(\alpha,\eps)>0$ and positive real number
$\delta=\delta(\eps)>0$ such that for every $y\in G$ and every integer $n\geq 0$
there is a point $z=z(y,\eps,n)\in G$ for which
\[
z\in B_n(y,\eps)
\quad\text{and}\quad
\ball(z,\delta)\subset \left(f^k (\ball(x,\alpha))\right)\setminus \left(\ball(\Nacc(f),\delta)\right)
\]
hold for every $x\in G$ and $k\ge N$.
\end{lemma}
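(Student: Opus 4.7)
The plan is to fix $\delta=\delta(\eps)$ and $N=N(\alpha,\eps)$ so that Theorem~\ref{thm:nacc}\eqref{con:nacc1} delivers $f^k(\ball(x,\alpha))\supset G\setminus\ball(\Nacc(f),\delta)$ uniformly in $x$ for every $k\ge N$, and then to produce $z$ by a case analysis according to the position of $y$ relative to $\Nacc(f)$. The key observation is that any $z$ with $\dist(z,\Nacc(f))\ge 2\delta$ automatically satisfies $\ball(z,\delta)\cap \ball(\Nacc(f),\delta)=\emptyset$ and $\ball(z,\delta)\subset G\setminus\ball(\Nacc(f),\delta)\subset f^k(\ball(x,\alpha))$ for every $x$ and $k\ge N$, so the real problem is to construct $z\in B_n(y,\eps)$ with that distance property.

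The parameter $\delta$ is chosen by combining three ingredients: (a) a uniform continuity modulus $\eta=\eta(\eps)>0$ for the finitely many maps $f,f^2,\ldots,f^{m-1}$, where $m<\nadler(G)$ is the bounded period from Theorem~\ref{thm:3-horseshoe-kappa}(2), so that $\rho(a,b)\le\eta$ implies $\rho(f^j(a),f^j(b))\le\eps/3$ for each $0\le j<m$; (b) the trapping of inaccessible rays and the controlled image of accessible rays supplied by Corollary~\ref{cor:sides-function}; and (c) the fact, coming from Theorem~\ref{thm:3-horseshoe} applied in the graph $G'$ of the Structure Theorem~\ref{thm:structure}, that for each $p\in\Nacc(f)$ there is a sequence of $f^m$-fixed points converging to $p$ from within an inaccessible side. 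I would take $\delta$ small enough that for every $p\in\Nacc(f)$ this sequence produces a point $z_p$ with $2\delta\le\rho(z_p,p)\le\eta$, that the balls $\ball(p,\eta)$ are pairwise disjoint across $p\in\Nacc(f)$, and that $\delta<\eps/3$. The integer $N$ is then obtained by first using Lemma~\ref{lem:geometric3} to find $\xi=\xi(\alpha/2)$ and then applying Theorem~\ref{thm:nacc}\eqref{con:nacc1} with parameters $(\delta,\xi)$, noting that $\overline{\ball(x,\alpha/2)}$ is a subgraph of diameter at least $\alpha/2$ containing a free arc of diameter at least $\xi$.

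Given $y$ and $n$, set $z=y$ whenever $\dist(y,\Nacc(f))\ge 2\delta$, and $z=z_p$ when $y\in\ball(p,2\delta)$ for the unique nearby $p\in\Nacc(f)$. In both cases $\dist(z,\Nacc(f))\ge 2\delta$, which yields the ball condition. The Bowen condition $z\in B_n(y,\eps)$ is trivial when $z=y$. When $z=z_p$, the triangle inequality
\[
\rho(f^i(z_p),f^i(y))\le\rho(f^i(z_p),f^i(p))+\rho(f^i(p),f^i(y))
\]
splits the estimate into two pieces. The first summand is bounded by $\eps/3$ for all $i\ge 0$ because both $z_p$ and $p$ are $f^m$-fixed (so their orbits are periodic with period dividing $m$) and $\rho(z_p,p)\le\eta$, and uniform continuity then transports this to every iterate via the periodicity.

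The main obstacle is the second summand $\rho(f^i(p),f^i(y))\le 2\eps/3$, which has to hold uniformly in $n$ for $y$ only close to, and different from, $p$. If $y$ lies in an inaccessible side of $p$, this follows by iterating Corollary~\ref{cor:sides-function}\eqref{con:periodic-rays} along the side-permutation $f^*$ of Theorem~\ref{thm:sides-function}, which traps the orbit of $y$ near the orbit of $p$. The delicate case is $y$ in an accessible side of $p$: iterating Corollary~\ref{cor:sides-function}\eqref{con:accessible-rays} gives $f^j(y)\in\ball(f^j(p),\eps_j')$ with a sequence of radii $\eps_j'$, and the crucial point is that over one full period of length $m$ this sequence cannot expand $\rho(y,p)$, because accumulation of $f^m$-fixed points at $p$ (Theorem~\ref{thm:3-horseshoe}) rules out purely expanding behaviour of $f^m$ at $p$. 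Combining this per-period non-expansion with uniform continuity of $f,\ldots,f^{m-1}$ gives a bound independent of $i$; assembling the local side-dynamics estimates from Corollary~\ref{cor:sides-function} into this single uniform bound for all $i\le n$ is the technical heart of the argument.
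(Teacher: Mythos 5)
There is a genuine gap, and it sits exactly where you flagged it as ``the technical heart.'' Your construction takes $z=z_p$ to be a fixed (or $m$-periodic) point near $p\in\Nacc(f)$ and then tries to show $z_p\in B_n(y,\eps)$ for \emph{all} $n$ via $\rho(f^i(z_p),f^i(y))\le\rho(f^i(z_p),f^i(p))+\rho(f^i(p),f^i(y))$. The second summand cannot be bounded uniformly in $i$: if $y$ lies in an accessible side of $p$, then by mixing the forward images of any open set $\ball(p,2\delta)\cap S_p$ eventually cover all of $G$ minus a small neighborhood of $\Nacc(f)$, so there are points $y$ arbitrarily close to $p$ whose orbits leave every fixed neighborhood of the (finite) orbit of $p$. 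The claim that ``accumulation of $f^m$-fixed points at $p$ rules out expanding behaviour over one period'' is false; indeed, between two consecutive fixed points $s<t$ accumulating at $p$ one typically has $f(x)>x$ on all of $(s,t)$ (this is exactly the configuration exploited in the proof of Theorem~\ref{thm:3-horseshoe}), so nearby orbits drift away and, once they escape, go everywhere. Since your $z$ depends only on which $p$ is near $y$ (not on $n$ or on the itinerary of $y$), no choice of $\delta$ and $\eta$ can repair this: a periodic point cannot $\eps$-shadow, for all time, an orbit that becomes dense.

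The paper's proof avoids this by letting the orbit of $z$ \emph{merge} with the orbit of $y$. After passing (via Theorem~\ref{thm:structure} and an iterate) to a map $g$ on $G'$ whose inaccessible points are endpoints with singleton preimages, it uses Theorem~\ref{thm:3-horseshoe} to pick two fixed points $0<p<p'<\eps$ on the edge $[0,a]$ containing the inaccessible endpoint $e_0=0$, with $g([0,p])\subset[0,p']$ and $g([0,p'])\subset[0,\eps]$, and sets $\delta=p/2$. If the orbit of $y$ never leaves $[0,p]$ one takes $z=p$; if it first leaves at time $m$, then $g^m(y)\in[p,p']$, and since $[p,p']$ is an arc with fixed endpoints it $g^m$-covers itself, so there is $z\in[p,p']$ with $g^j(z)\in[0,\eps]$ for $j<m$ and $g^m(z)=g^m(y)$ \emph{exactly}. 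Before time $m$ both orbits sit in $[0,\eps]$, and from time $m$ on they coincide, which is what gives $z\in B_n(y,\eps)$ for every $n$. Your overall frame (choose $\delta$, $N$ so that $f^k(\ball(x,\alpha))\supset G\setminus\ball(\Nacc(f),\delta)$, then find $z$ at distance $\ge 2\delta$ from $\Nacc(f)$) matches the paper, but without this orbit-merging device the shadowing step does not go through.
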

\begin{proof}Fix $\alpha,\eps>0$. First, assume that $f$ is exact. It is easy to see
that for each $\delta>0$ there is an integer $N>0$ such that $f^k(B(x,\alpha))=G$ for
each $x\in G$ and $k\ge N$. Set $z=y$ and the proof for the first case is finished.

Consider the second case, when $f$ is pure mixing. By the uniform continuity and
compactness, it is enough to prove the assertion of the Lemma for a map $f^m$ for some
$m>0$. Using Theorem~\ref{thm:structure} we can find the integer $m$, graph $G'$, and
pure mixing map $g\colon G'\defmapsto G'$, which is an extension of $f^m$ and
$g^{-1}(e)=\{e\}$ for each $e\in \Nacc(g)$. It is sufficient to prove that the Lemma
holds for $g$. To simplify further our notation we can assume that $g$ has the unique
inaccessible endpoint $e_0$. The other cases can be handled analogously.
Using our convention $(C)$ we isometrically identify the edge containing $e_0$ with an
interval $[0,a]$, with $e_0=0$ and $a>0$.
We fix $\eps, \alpha>0$ and assume that $\eps<a$. By continuity and
Theorem~\ref{thm:3-horseshoe}, we can find fixed points $0<p<p'<\eps$ such that
$g([0,p'])\subset [0,\eps]$ and $g([0,p])\subset [0,p']$. Let $\delta=p/2$.
Theorem~\ref{thm:nacc}\eqref{con:nacc1} allows us to find an integer
$N>0$ such that $G'\setminus [0,\delta]\subset g^k(J)$ for
each subgraph $J$ of $G'$ with $\diam J\ge \alpha$ and each $k\ge N$. In particular,
$G'\setminus [0,\delta]\subset g^k(B(x,\alpha))$ for every $x\in G'$ and $k\ge N$.
Fix $y\in G'$ and let us agree that $\min \emptyset=\infty$. Consider
\[m=\min\{k:g^k(y)\in G'\setminus [0,p]\}.\]
We have the following cases:
\begin{description}
\item[Case 1] $m=0$. Then we set $z=y$,
\item[Case 2] $0<m<\infty$. Observe that $g^k(y)\in [0,p]$ for $k=0,1,\ldots m-1$.
Furthermore, $g^m(y)\in [p,p']$. But $g([p,p'])\subset [0,a]$ and since $p,p'$ are
fixed points for $g$, we see that $[p,p']$ is $g^m$-covered by itself.
Therefore, we can find $z\in [p,p']$ such that $g^m(z)=g^m(y)$.
\item[Case 3] $m=\infty$. Then it is enough to take $z=p$.
\end{description}
Clearly, in any case $z\in B_n(y,\eps)$ for all $n\ge 0$ and
$\ball(\Nacc(g),\delta)\cap B(z,\delta)=\emptyset$.
\end{proof}

%
%

\begin{lemma}\label{lem:geometric4}
Let $\alpha>0$. Then there is a constant $\gamma=\gamma(\alpha)$ such that for
every map $g\colon G\defmapsto G$ if $J\subset G$ is a star with $\diam g(J)\ge\alpha$,
then there is a free arc $J' \subset J$ which contains two subarcs $J_1,J_2$ with at most
one common point such that $g(J_i)$ is also a free arc $g$-covered by $J_i$ with
$\diam g(J_i)\ge \gamma$ for $i=1,2$.
\end{lemma}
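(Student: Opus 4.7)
The plan is to reduce the problem to a single spoke of the star $J$ and then extract the two covering subarcs from within that spoke. First, since $J=\St(v)$ is a star with $\diam g(J)\ge\alpha$, I pick $x,y\in J$ with $\rho(g(x),g(y))\ge\alpha$; the triangle inequality applied at $g(v)$ forces $\rho(g(x),g(v))\ge\alpha/2$ or $\rho(g(v),g(y))\ge\alpha/2$, so some spoke $S$ of $J$ (the one containing $x$ or $y$) satisfies $\diam g(S)\ge\alpha/2$. Being a single edge of $G$ incident to $v$, $S$ is itself a free arc of $G$.

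Next I would apply Lemma~\ref{lem:geometric3} to the connected set $g(S)$ to obtain a free arc $K\subset g(S)$ with $\diam K\ge\xi(\alpha/2)$, and split $K$ into halves $K^{(1)},K^{(2)}$ meeting only at their common midpoint $m$, each of diameter at least $\xi(\alpha/2)/2$. Applying Lemma~\ref{lem:covering}\eqref{lem:covering:Al5} to each $K^{(i)}\subset g(S)$ yields a decomposition $K^{(i)}=M^{(i)}_1\cup M^{(i)}_2$ with disjoint interiors, each piece $g$-covered by $S$. Selecting the larger piece gives a free subarc $M^{(i)}\subset K^{(i)}$ with $\diam M^{(i)}\ge\gamma:=\xi(\alpha/2)/4$, and by the definition of $g$-covering there exist free arcs $L_i\subset S$ with $g(L_i)=M^{(i)}$. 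Observe that $M^{(1)}\cap M^{(2)}\subset K^{(1)}\cap K^{(2)}=\{m\}$.

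Finally I would analyse the position of $L_1=[a_1,b_1]$ and $L_2=[a_2,b_2]$ inside the spoke $S$, identified with a real interval via Convention (C), WLOG $a_1\le a_2$. If $L_1\cap L_2$ has at most one point, I take $J_i=L_i$. Otherwise, on the nondegenerate overlap $g$ takes values in $M^{(1)}\cap M^{(2)}\subset\{m\}$, hence $g\equiv m$ on $L_1\cap L_2$; this immediately rules out the nested configuration $L_2\subset L_1$ (which would force $M^{(2)}\subset\{m\}$, contradicting $\diam M^{(2)}\ge\gamma$), leaving only $a_1<a_2<b_1<b_2$. Setting $J_1=[a_1,a_2]$ and $J_2=[b_1,b_2]$, a short closure/connectedness argument, using that $g\equiv m$ on $[a_2,b_1]$ and that $m$ is an endpoint of each $M^{(i)}$, yields $g(J_i)=M^{(i)}$, so $J_i$ $g$-covers its image via the trivial choice of subarc. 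In either case $J':=[\min\{a_1,a_2\},\max\{b_1,b_2\}]\subset S$ is a free arc of $G$ containing both $J_1$ and $J_2$, and $\gamma=\xi(\alpha/2)/4$ is the desired constant.

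The main delicate step is the overlap analysis in the last paragraph: one must rule out the nested configuration and then verify that trimming $L_i$ down to $J_i$ (discarding the part where $g\equiv m$) still yields a surjection onto all of $M^{(i)}$. This is precisely what the fact that $m$ sits at an endpoint of each $M^{(i)}$ delivers, and it is the only place where the geometry of the decomposition $K=K^{(1)}\cup K^{(2)}$ into halves at a common midpoint is genuinely used.
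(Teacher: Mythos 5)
Your proof is correct and lands on the same kind of constant, but it is organized differently from the paper's argument and is in one respect more complete. The paper applies Lemma~\ref{lem:geometric3} to $g(J)$ for the whole star, pulls the resulting long free arc back into two free arcs $E_1,E_2\subset J$ via Lemma~\ref{lem:covering}\eqref{lem:covering-star-image}, concludes that one of them $g$-covers a free arc $K'$ of definite diameter, splits $K'$ into two halves, and closes with a one-line appeal to the monotonicity of the covering relation (Lemma~\ref{lem:covering}\eqref{lem:covering:Al2}). You instead reduce to a single spoke at the outset by the triangle inequality at $g(v)$ --- an elementary substitute for Lemma~\ref{lem:covering}\eqref{lem:covering-star-image}, at the cost of a factor $2$ in the constant --- then split the image arc $K$ and pull back each half separately with Lemma~\ref{lem:covering}\eqref{lem:covering:Al5}. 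The real added value of your route is the overlap analysis at the end: the requirement that $J_1$ and $J_2$ meet in at most one point is precisely the nontrivial content of the lemma, and monotonicity of covering alone does not produce domain arcs with essentially disjoint interiors; your case distinction (at most one common point / nested / properly overlapping), the observation that $g\equiv m$ on a nondegenerate overlap because $g(L_1\cap L_2)\subset M^{(1)}\cap M^{(2)}\subset\{m\}$, and the trimming of $L_i$ to $J_i$ supply this detail correctly, and the verification that the trimmed arcs still map onto all of $M^{(i)}$ goes through as you indicate.

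Two small points to tidy up, neither fatal and both shared with the paper's own level of rigor. First, splitting $K$ at its length-midpoint does not automatically make both halves have diameter at least $\xi(\alpha/2)/2$ in the taxicab metric of a graph with short cycles; either split at a point at distance $\xi(\alpha/2)/2$ along $K$ from one of two points realizing $\diam K$, or simply absorb the discrepancy into the constant $\gamma$. Second, the claim that a spoke of $J$ is an edge of $G$, hence a free arc, uses the standing convention that the stars in question sit inside canonical neighbourhoods; for an abstract star in $G$ a spoke could contain a branching point of $G$ in its interior, but this ambiguity is already present in the statement and in the paper's use of Lemma~\ref{lem:covering}\eqref{lem:covering-star-image}.
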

\begin{proof}
By Lemma~\ref{lem:geometric3} there is a constant $\xi=\xi(\alpha)$ such that $g(J)$
contains a free arc $K$ with $\diam K\ge \xi$. We conclude from
Lemma~\ref{lem:covering}\eqref{lem:covering-star-image} that $K$ is contained in
$f(E_1\cup E_2)$ where $E_1,\,E_2$ are edges of $S$. Clearly, one of these edges, say
$E_1$, must $g$-cover a free arc $K'$ with $\diam K'\ge \xi/3$. We set $\gamma=\xi/6$
and write $K'=K_1\cup K_2$, where $K_1,\,K_2$ are free arcs with at most one common
point, and $\diam K_i\ge \gamma$. Now, an application of
Lemma~\ref{lem:covering}\eqref{lem:covering:Al2} finishes the proof.
\end{proof}

\begin{lemma}\label{lem:local-expansivity}
Let $f\colon G\defmapsto G$ be a mixing graph map. If $0<\eps<\frac{1}{2} \diam G$
and $\delta>0$ then there is an $N=N(\eps,\delta)>0$ such that $B_n'(x,\eps)\subset
B(x,\delta)$ for all $x\in G$ and all $n\ge N$.
\end{lemma}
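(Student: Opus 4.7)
I would argue by contradiction. Suppose the conclusion fails for some $\eps,\delta>0$: then there are sequences $x_k\in G$ and $n_k\to\infty$ with $B_{n_k}'(x_k,\eps)\not\subset \ball(x_k,\delta)$. Each $B_{n_k}'(x_k,\eps)$ is a connected subset of $G$ containing $x_k$, of diameter at least $\delta$, and by the definition of the Bowen ball, for every $0\le i\le n_k$ we have $f^i(B_{n_k}'(x_k,\eps))\subset \ball(f^i(x_k),\eps)$, so its diameter does not exceed $2\eps$.

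Using Lemma~\ref{lem:geometric3}, I would extract from each $B_{n_k}'(x_k,\eps)$ a free arc $J_k$ with $\diam J_k\ge \xi(\delta)=:\delta_0>0$. For every $i\le n_k$, the inclusion $f^i(J_k)\subset \ball(f^i(x_k),\eps)$ still holds, so $\diam f^i(J_k)\le 2\eps$.

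Since $\eps<\tfrac12\diam G$, I can pick $\eps'>0$ so that $\diam G-2\eps'>2\eps$. By Lemma~\ref{lem:mixing_graph}\eqref{con:mixing_graph_unifrom_component} applied to this $\eps'$ and $\delta_0$, there exists $N=N(\eps',\delta_0)$ such that for any subgraph $J$ with $\diam J\ge\delta_0$ and every $n\ge N$, each connected component of $G\setminus f^n(J)$ has diameter less than $\eps'$. Choose $k$ large enough that $n_k\ge N$ and apply this to $J_k$. Then every point of $G$ lies within $\eps'$ of $f^N(J_k)$ (any point outside $f^N(J_k)$ sits in a component whose closure meets $f^N(J_k)$ and has diameter less than $\eps'$); consequently $\diam f^N(J_k)\ge \diam G-2\eps'>2\eps$, contradicting $\diam f^N(J_k)\le 2\eps$.

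The argument is mostly bookkeeping; the only thing to be careful about is translating the ``small-component'' form of mixing in Lemma~\ref{lem:mixing_graph} into a quantitative $\eps'$-density statement for $f^N(J_k)$, which is what allows the hypothesis $\eps<\tfrac12\diam G$ to be used to reach a contradiction. No deeper tool beyond the mixing characterization and the free-arc extraction lemma is needed.
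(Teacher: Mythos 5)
Your proof is correct. It follows the same overall strategy as the paper's one-line argument (contradiction via the tension between ``all forward images of the connected Bowen ball have diameter at most $2\eps<\diam G$'' and mixing forcing images to spread out), but the execution differs in a way worth noting. The paper's proof invokes compactness of $G$ to pass from the failing sequence to a single point $x$ for which $B=\bigcap_{k}B_k'(x,\eps)$ is a nondegenerate continuum, hence has nonempty interior, and then contradicts mixing directly; this limit step (Hausdorff convergence of the sets $B_{n_k}'(x_k,\eps)$, upper semicontinuity of the Bowen balls) is left implicit and is the only delicate point there. You avoid the limit entirely by working at a fixed finite stage: you extract a free arc $J_k$ of definite diameter $\xi(\delta)$ via Lemma~\ref{lem:geometric3} and then apply the uniform, quantitative characterization of mixing in Lemma~\ref{lem:mixing_graph}\eqref{con:mixing_graph_unifrom_component} to force $\diam f^N(J_k)\ge\diam G-2\eps'>2\eps$, directly contradicting the Bowen-ball bound. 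Your route buys a fully uniform argument with no compactness/limit bookkeeping, at the cost of invoking the stronger form of the mixing lemma; the paper's is shorter but leaves more to the reader. Both correctly use the hypothesis $\eps<\tfrac12\diam G$ at the same point.
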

\begin{proof}
If the conclusion of the Lemma does not hold, then, using compactness
of $G$, we could find a point $x\in G$ and $\eps>0$ such that the set
\[B=\bigcap_{k=0}^\infty B'_k(x,\eps)\] would have a non-empty interior.
Then $\diam f^n(B) \le 2\eps <\diam G$ for all $n$ contradicting topological
mixing.
\end{proof}


\begin{lemma}\label{lem:incompressibility}
If $f\colon G\defmapsto G$ is a mixing graph then for every $\eps>0$ there is a constant
$\zeta=\zeta(\eps)$ such that
\[
0<\zeta \le \diam f^n (\ball'_n(x,\eps))
\]
for every $n$ and $x\in G$.
\end{lemma}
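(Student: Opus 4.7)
The plan is to argue by contradiction. Suppose that for some $\eps>0$ there exist sequences $n_k$ and $x_k\in G$ with
\[
D_k := \diam f^{n_k}(\ball'_{n_k}(x_k,\eps)) \to 0.
\]
First I would reduce to the case $n_k\to\infty$. For each fixed $n\ge 0$, the set $\ball'_n(x,\eps)$ contains an open neighborhood of $x$ (since the open Bowen ball is open and contained in $\ball'_n(x,\eps)$), and by mixing $f^n$ cannot be constant on any open set, so $\diam f^n(\ball'_n(x,\eps)) > 0$. A standard lower-semicontinuity argument on the compact space $G$ gives a strictly positive infimum for each fixed $n$, so indeed $n_k\to\infty$.

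Setting $A_k := \ball'_{n_k}(x_k,\eps)$, Lemma~\ref{lem:local-expansivity} forces $\diam A_k\to 0$. Passing to a subsequence I may assume $x_k\to x^*$, $f^{n_k}(x_k)\to y^*$, and that both $A_k$ and $f^{n_k}(A_k)$ converge in the Hausdorff metric (to $\{x^*\}$ and to a subcontinuum of $B(y^*,D_k)$ shrinking to $\{y^*\}$, respectively).

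The contradiction must come from the fact that $A_k$ is a connected component of $B_{n_k}(x_k,\eps)$, hence is maximal. The plan is to show that once $D_k$ is smaller than a certain threshold provided by Lemma~\ref{lem:buzzi-main}, the set $A_k$ can be enlarged inside the Bowen ball $B_{n_k}(x_k,\eps)$, contradicting that maximality. Concretely, I would fix $\alpha>0$ small (say $\alpha<\eps/4$) and apply Lemma~\ref{lem:buzzi-main} with parameters $(\alpha,\eps/4)$ to produce $N=N(\alpha,\eps)$ and $\delta=\delta(\eps)>0$ such that, for any $y,n$, some $z(y,\eps,n)\in B_n(y,\eps)$ satisfies $B(z,\delta)\subset f^N(B(v,\alpha))$ for every $v\in G$. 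Applying this with $y=f^{n_k-N}(x_k)$ and $n=0$ yields, for all $k$ large, a point $z_k$ at distance at most $\eps$ from $f^{n_k-N}(x_k)$ for which $B(z_k,\delta)\subset f^N(B(v,\alpha))$ for every $v$.

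The hard part — and where I expect the main obstacle — is converting this ``fat image'' property into an actual enlargement of $A_k$. The idea is: for large $k$ we have $A_k\subset B(x_k,\alpha)$ (by Lemma~\ref{lem:local-expansivity}), and the orbit $f^i(A_k)$ shadows the orbit of $x_k$ within $\eps$, so $A_k\cup f^{-N}(\{z_k\})$ (restricted to an appropriate local branch) ought to sit inside $B_{n_k}(x_k,\eps)$. To make this rigorous one uses that $A_k$ is a closed connected subset of the graph $G$ and hence has a finite topological boundary $\partial A_k$; combined with Lemma~\ref{lem:geometric4} applied to a star containing a boundary point of $A_k$, one produces a free arc crossing $\partial A_k$ whose iterates shadow $x_k$'s orbit. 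If $D_k<\delta$, the image $f^{n_k}$ of this enlargement still lies inside $B(f^{n_k}(x_k),\eps)$ and inherits the connectivity to $x_k$, contradicting the maximality of $A_k=\ball'_{n_k}(x_k,\eps)$. This yields the required uniform lower bound $\zeta=\zeta(\eps)>0$.
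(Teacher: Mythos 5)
There is a genuine gap, and it lies exactly where you flagged it: the ``enlargement'' step cannot work, because the maximality of $A_k=\ball'_{n_k}(x_k,\eps)$ is a fact that cannot be contradicted. Since $A_k$ is closed and $G$ is locally connected, every point $y$ in the topological boundary of $A_k$ satisfies $\max_{0\le i\le n_k}\rho(f^i(x_k),f^i(y))=\eps$ (otherwise a connected neighbourhood of $y$ would lie in $B_{n_k}(x_k,\eps)$ and hence in the component $A_k$, making $y$ interior). So $A_k$ genuinely cannot be enlarged inside the Bowen ball, and --- more to the point --- a small final diameter $D_k$ is perfectly compatible with this maximality: the boundary condition only tells you that some \emph{intermediate} image $f^{i}(A_k)$ has diameter at least $\eps$, and nothing in your argument prevents $f^{n_k-i}$ from contracting that large continuum back down to something tiny. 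Ruling out that contraction is the actual content of the lemma, and your proposal never addresses it. The appeal to Lemma~\ref{lem:buzzi-main} does not help: it produces points whose $\delta$-neighbourhoods lie in images of $\alpha$-balls, i.e.\ it says images of balls are large, not that the image of the specific continuum $A_k$ is large; and adjoining preimages of $z_k$ to $A_k$ gives no control over the intermediate iterates $f^i$, $0\le i<n_k$, which is what membership in $B_{n_k}(x_k,\eps)$ requires.

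The missing ingredient is a uniform non-collapsing estimate, and this is how the paper proceeds (directly, without contradiction). One sets $\beta(\eps)=\inf\{\diam f^m(J): m\ge 0,\ J\ \text{a subcontinuum of}\ G,\ \diam J\ge\eps\}$ and shows $\beta(\eps)>0$ by compactness of the hyperspace of such continua, continuity of $J\mapsto\diam f^m(J)$, and the fact that mixing forces $\diam f^m(J)\ge\tfrac12\diam G$ for all large $m$, so the infimum reduces to a minimum over finitely many $m$ of positive quantities. Then one runs an induction on $m$: either $f^{m+1}(\ball'_{m+1}(x,\eps))=f(f^m(\ball'_m(x,\eps)))$, or the component gets cut at step $m+1$, in which case $\diam f^{m+1}(\ball'_{m+1}(x,\eps))\ge\eps$; in either case $f^{m}(\ball'_{m}(x,\eps))$ is the $f^{m-m'}$-image of some $f^{m'}(\ball'_{m'}(x,\eps))$ of diameter at least $\eps$, whence $\zeta=\beta(\eps)$ works. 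Your contradiction-and-Hausdorff-limit framing could in principle be repaired, but only by importing precisely this $\beta(\eps)$ estimate, at which point the direct argument is shorter. (Your preliminary reduction to $n_k\to\infty$ is essentially sound, though the positivity of $\inf_x \diam f^n(\ball'_n(x,\eps))$ for fixed $n$ needs the extra observation that $\ball'_n(x',\eps)$ contains a fixed open connected neighbourhood of $x$ once $x'$ is close to $x$ in the Bowen metric.)
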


\begin{proof}Fix $\eps >0$. Without loss of generality we may assume
that $\eps<1/2 \diam G$. Let $\mathcal{K}(\eps)$ be a family of
subgraphs of $G$ with diameter at least $\eps$. Then $\mathcal{K}(\eps)$
is a closed subset of a hyperspace of subcontinua of $G$. Moreover,
$\Phi_n=\diam\circ f^n$ is a continuous function on $\mathcal{K}(\eps)$
for all $n\ge 0$. Observe that there is an $N$ such that
$\diam f^n(J)\ge 1/2 \diam G$ for all $J\in\mathcal{K}(\eps)$, and
$n\ge N$. Therefore
\[
\beta(\eps):=\inf_{n\ge 0} \{\diam f^n(J): J\in \mathcal{K}(\eps)\} =
\min_{0\le j\le N}\min \Phi_j (\mathcal{K}(\eps)) >0,
\]
as no subgraph of $G$ can be mapped by any $f^n$ onto a point.

Fix $x\in G$. We claim that $\diam f^n(\ball'_n(x,\eps)) \ge \beta(\eps)$ defined above.
We have $f^{n+1}(\ball'_{n+1}(x,\eps))\subset f(f^{n}(\ball'_{n}(x,\eps)))$ for all $n$.
Furthermore, note that:
\begin{description}
\item[($\star$)]
If $f^{n+1}(\ball_{n+1}'(x,\eps))\neq f(f^n(\ball_{n}'(x,\eps)))$ then there is
$y\in B_{n}'(x,\eps)$ with
\[
\rho(f^{n+1}(x),f^{n+1}(y))\geq \eps,
\]
hence $\diam f^{n+1}(B_{n+1}'(x,\eps))\geq \eps$.
\item[($\star$$\star$)]
If $f^{n+1}(\ball_{n+1}'(x,\eps))= f(f^n(\ball_{n}'(x,\eps)))$ and
$\diam f^{n}(B_{n}'(x,\eps))\geq \eps$ then
\[
\diam f^{n+1}(\ball_{n+1}'(x,\eps))\ge \beta(\eps)
\]
by the definition of $\beta(\eps)$.
\end{description}
Applying $(\star)$ or $(\star\star)$, accordingly, we get:
\begin{description}\item[($\star$$\star$$\star$)]
If $\diam f^{n}(B_{n}'(x,\eps))\geq \eps$ for some $n\geq 0$ then
$\diam f^{n+k}(\ball_{n}'(x,\eps)))\ge \beta(\eps)$ for every $k\geq 0$ by the definition of $\beta(\eps)$.
\end{description}
Now, $\ball(x,\eps)=\ball'_0(x,\eps)$, and we proceed by induction.
\end{proof}

The following definition was introduced by Bowen in \cite{BowenSP}.

\begin{definition}
We say that a continuous map $f\colon X\ra X$ acting on a compact metric space $(X,d)$
has the \emph{specification property} if for every $\eps> 0$ there exists an integer
$M = M(\eps)$ such that for any $s > 1$, for any $s$ points $x_1,x_2,\ldots,x_s\in X$,
for any integers $a_1\leq b_1<a_2\leq b_2<\ldots<a_s\leq b_s$ with $a_i-b_{i-1}\geq M$,
for $2 \leq i \leq s$ and for any integer $p$ with $p \geq M +b_s -a_1$ there exists a
point $x \in X$ with $f^p(x) = x$ such that $d(f^n(x), f^n(x_i))<\eps$ for
$a_i \leq n \leq b_i$, and $1 \leq i \leq s$.
\end{definition}

\begin{remark}
Without loss of generality we can take $a_1=0$ and $p=M+b_s$ in the above definition.
\end{remark}
\begin{theorem}[Blokh]\label{thm:Blokh}
Every mixing graph map has the specification property.
\end{theorem}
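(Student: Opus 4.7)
The plan is to adapt Buzzi's argument from \cite{Buzzi} to the graph setting, using the preceding lemmas as replacements for their interval analogues. Fix $\eps > 0$ and, without loss of generality, specification data $x_1,\ldots,x_s$ with times $0 = a_1 \leq b_1 < \ldots \leq b_s$ satisfying $a_{i+1} - b_i \geq M$. The goal is to construct a period-$p$ point of $f$, with $p = M + b_s$, that $\eps$-shadows each $x_i$ on $[a_i,b_i]$.

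First, I would fix constants in the following order. Lemma~\ref{lem:local-expansivity} supplies $\eta < \eps/2$ and $K$ with $B_n'(x,\eta) \subset B(x,\eps/2)$ for all $n \geq K$ and $x \in G$. Lemma~\ref{lem:incompressibility} then supplies $\zeta = \zeta(\eta) > 0$ such that $\diam f^n(B_n'(x,\eta)) \geq \zeta$ for every $n$ and $x$. Lemma~\ref{lem:geometric4} supplies $\gamma = \gamma(\zeta)$ so that any star $J$ with $\diam f(J) \geq \zeta$ contains two free sub-arcs that $f$-cover themselves and whose images have diameter $\geq \gamma$. Finally, Lemma~\ref{lem:buzzi-main}, applied with $\alpha = \gamma$ and tolerance $\eps/2$, gives $N$ and $\delta$ such that for every $y \in G$ and every $n \geq 0$ there is $z = z(y,n) \in B_n(y,\eps/2)$ with $B(z,\delta)\subset f^k(B(x,\gamma))$ for all $x\in G$ and $k \geq N$, and with $B(z,\delta)$ disjoint from a uniform neighborhood of $\Nacc(f)$. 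Put $M = N + K$.

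Second, I would build the shadowing arc by downward induction over $i = s, s-1, \ldots, 1$. Write $V_i = B_{b_i - a_i}'(x_i, \eta)$, so $V_i \subset B(x_i,\eps/2)$ and $\diam f^{b_i - a_i}(V_i) \geq \zeta$. For $i = s$, apply Lemma~\ref{lem:geometric3} to locate a star inside $f^{b_s-a_s}(V_s)$ and Lemma~\ref{lem:geometric4} to extract a free sub-arc $W_s \subset V_s$ on which a further iterate self-covers. For $i < s$, assume $W_{i+1} \subset V_{i+1}$ has been constructed with $W_{i+1}\subset B(z_{i+1},\delta)$, where $z_{i+1}$ is the Buzzi point provided by Lemma~\ref{lem:buzzi-main} for $y=x_{i+1}$. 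Since $a_{i+1} - b_i \geq N$ and $f^{b_i - a_i}(V_i)$ contains a $\gamma$-ball (up to the geometric reduction of Lemmas~\ref{lem:geometric3}--\ref{lem:geometric4}), Lemma~\ref{lem:buzzi-main} yields $f^{a_{i+1} - a_i}(V_i) \supset B(z_{i+1},\delta) \supset W_{i+1}$. Using Lemma~\ref{lem:covering}\eqref{lem:covering:Al2} and \eqref{lem:covering:Al5}, pull back to a free sub-arc $W_i \subset V_i$ such that $f^{a_{i+1} - a_i}(W_i) \supset W_{i+1}$. A final $N$-step application of Lemma~\ref{lem:buzzi-main} to the starting end produces a free arc $W \subset W_1$ with $f^p(W) \supset W_1 \supset W$, and then Lemma~\ref{lem:covering}\eqref{lem:covering:Al1} delivers a fixed point $x^* \in W$ of $f^p$. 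By construction $f^n(x^*) \in V_i \subset B(x_i,\eps)$ for every $n \in [a_i,b_i]$, which is the specification conclusion.

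The main obstacle will be the geometric bookkeeping in the induction: ensuring at every step that $W_i$ is a genuine free arc inside $V_i$ that avoids a uniform neighborhood of $\Nacc(f)$ (so that the covering relation persists and does not collapse at an inaccessible point), and that $f^{a_{i+1}-a_i}(W_i)$ truly $f$-covers $W_{i+1}$ and not merely contains it set-theoretically. Lemma~\ref{lem:buzzi-main} is what routes the intermediate landing points $z_{i+1}$ through the safe region outside $\Nacc(f)$, and Lemma~\ref{lem:geometric4} is what converts the $\zeta$-lower bound on image diameters into $\gamma$-sized targets that can be caught uniformly by $f^N$ of small balls; juggling these while preserving the shadowing precision on each $[a_i,b_i]$ from Lemma~\ref{lem:local-expansivity} is the delicate part of the argument.
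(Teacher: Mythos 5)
Your overall architecture is the same as the paper's: the same four lemmas (\ref{lem:buzzi-main}, \ref{lem:geometric4}, \ref{lem:local-expansivity}, \ref{lem:incompressibility}) are combined into a cyclic chain of $f$-coverings whose composition self-covers a free arc, and Lemma~\ref{lem:covering}\eqref{lem:covering:Al1} then produces the periodic shadowing point. However, there is a concrete gap in how you set up the induction. You take $V_{i+1}=B'_{b_{i+1}-a_{i+1}}(x_{i+1},\eta)$, a Bowen ball centered at $x_{i+1}$, and then assert that the arc $W_{i+1}\subset V_{i+1}$ satisfies $W_{i+1}\subset \ball(z_{i+1},\delta)$. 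This is unjustified and in general false: Lemma~\ref{lem:buzzi-main} only places $z_{i+1}$ within $\eps/2$ of the orbit of $f^{a_{i+1}}(x_{i+1})$, so a set near $x_{i+1}$'s orbit is only within roughly $\eps/2+\eta$ of $z_{i+1}$, while $\delta$ is typically far smaller than $\eps$. Since $\ball(z_{i+1},\delta)$ is the only region that Lemma~\ref{lem:buzzi-main} guarantees to be swallowed by $f^{k}$ of the previous stage (this is precisely the mechanism that dodges the inaccessible sides), the covering step $f^{a_{i+1}-a_i}(W_i)\supset W_{i+1}$ fails as written. The repair is what the paper does: center the Bowen balls at the Buzzi points, taking $B'_{n(i)}(z_i,\eps/2)$ with the padded length $n(i)=b_i-a_i+N_2$ so that Lemma~\ref{lem:local-expansivity} forces $B'_{n(i)}(z_i,\eps/2)\subset \ball(z_i,\delta)$ \emph{regardless of how small $b_i-a_i$ is}; the shadowing of $x_i$ is then recovered by the triangle inequality from $z_i\in B_{n(i)}(f^{a_i}(x_i),\eps/2)$, which is why the construction runs with $\eps/2$ rather than $\eps$. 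Note that in your version Lemma~\ref{lem:local-expansivity} does no work at all (the containment $B'_n(x,\eta)\subset \ball(x,\eps/2)$ you extract from it holds trivially since $\eta<\eps/2$), which is a sign that the shrinking step has been attached to the wrong center.

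A second, smaller issue is the passage from set-theoretic containment to the $f$-covering relation, which you flag but do not resolve. Knowing $W_{i+1}\subset f^{k}(\text{something})$ does not give a covering of $W_{i+1}$; Lemma~\ref{lem:covering}\eqref{lem:covering:Al5} splits the \emph{target} into two halves, each of which is covered, so a single-arc induction loses half of its target at every step. The paper's device is to carry \emph{two} candidate arcs $J^i_1,J^i_2$ with at most one common point inside each Bowen ball (this is exactly what Lemma~\ref{lem:geometric4} provides) and to use Lemma~\ref{lem:covering}\eqref{lem:covering:Al4} to conclude that at least one of them is genuinely covered; the chain is then routed through the universal arc $I_U$, which both standardizes the intermediate targets and makes the exponent bookkeeping $p=n(1)+k(1)+n(2)+k(2)+2N_3$ transparent. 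Your induction can be made to work, but only after incorporating both of these fixes, at which point it coincides with the paper's proof.
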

\begin{proof}
Let $f\colon G\defmapsto G$ be a mixing graph map and fix $\eps>0$. Our first task
is to find an suitable integer $N>0$ as in the definition of specification.
Without lost of generality we may assume that $\eps<(1/2)\diam G$. Set
$\alpha=\diam I_U$, where $I_U$ is an universal arc for $f$. Let
$N_1=N(\eps/2,\alpha/2)$ and $\delta=\delta(\eps/2)$ be given by
Lemma \ref{lem:buzzi-main} for $\eps/2$ and $\alpha$ as above. We may also
assume that the open ball $\ball(x,\delta)$ is a canonical neighborhood of
$x$ for each $x\in G$. We plug $\delta$, and $\eps/2$ into
Lemma~\ref{lem:local-expansivity} to get an $N_2=N(\eps/2,\delta)$ such that
$B_n'(x,\eps/2)\subset\ball(x,\delta)$ for all $x\in G$ and all $n\ge N_2$.
As a direct consequence of Lemma~\ref{lem:incompressibility} and
Lemma~\ref{lem:geometric4} we can find a constant $\beta>0$ such that for every
$x\in G$ and every $n>0$ the following condition holds
\begin{description}
\item[($\star$)] there is a free arc $J'\subset B_n'(x,\eps/2)$ containing two
free arcs $J_1$, and $J_2$ with at most one common point and two free arcs
$K_1$ and $K_2$ with $\diam K_i\ge \beta$ for $i=1,2$ and such that $K_i$ is
$f^n$ covered by $J_i$ for $i=1,2$.
\end{description}
For $\beta>0$ as defined above we can find an $N_3>0$ such that if $n\ge N_3$
then $I_U$ is $f^n$ covered by each closed interval $K$ with $\diam K>\beta$.

We claim that $N = N_1+N_2+N_3$ will fulfill the definition of the
specification property. For the proof of our claim we assume to simplify the
notation that $s=2$ and we choose any $x_1,x_2\in G$, and integers
$0=a_1\leq b_1<a_2\leq b_2$, with $a_2-b_1\ge N$ . Finally, we fix any
$p\ge b_2+N$. For $i=1,2$ let $y_i=f^{a_i}(x_i)$ and $n(i)=b_{i}-a_{i}+N_2$.
Let $z_i=z(y_i,\eps/2,n(i))\in B_{n(i)}(y_i,\eps/2)$ be provided by
Lemma~\ref{lem:buzzi-main}. By our choice of $N_2$ we conclude that
$B_i=B'_{n(i)}(z_i,\eps/2)\subset \ball(z_i,\delta)$ for $i=1,2$. Let us
denote the free arcs constructed in $B_i$ in $(\star)$ above by $J^i_1$ and
$J^i_2$. By Lemma~\ref{lem:buzzi-main} we have $\ball(z_i,\delta)\subset f^n(I_U)$
for each $i=1,2$ and $k\ge N_1$. Applying Lemma~\ref{lem:covering}\eqref{lem:covering:Al3}
we see that for $i=1,2$ and $k$ as above $I_U$ must $f^k$ cover at least one
free arc $I_i(k)\in\{J^i_1,J^i_2\}$. Let $I_1=I_1(k(1))$, where
$k(1)=a_2-b_1-N_2+N_3$, and $I_2=I_2(k(2))$, where $k(2)=p-(b_2+N_2+N_3)$.
Appealing again to the condition $(\star)$ we see that each $I_i$ covers
through $f^{n(i)}$ an interval $K_i$ with $\diam K_i\ge \beta$. This in turn
implies that each $K_i$ must $f^{N_3}$-cover $I_U$. In conclusion, we get
\[
I_1\stackrel{f^{n(1)}}{\implies} K_1 \stackrel{f^{N_3}}{\implies}  I_U
   \stackrel{f^{k(1)}}{\implies} I_2 \stackrel{f^{n(2)}}{\implies} K_2
   \stackrel{f^{N_3}}{\implies}  I_U \stackrel{f^{k(2)}}{\implies} I_1
\]
where $I\stackrel{f}{\implies} K$ denotes here that $I$ $f$-covers $K$.
It follows that $I_1$ is $f^p$ covered by itself, since $p=n(1)+k(1)+n(2)+k(2)+2N_3$.
Therefore there is a $p$-periodic point $q\in I_1$ such that
$q\in B'_{n(1)}(z_1,\eps/2)$ and $z_1\in B_{n(1)}(y_1,\eps/2)$. Moreover,
$r=f^{a_2}(q)\in I_2$, hence $r\in B'_{n(2)}(z_2,\eps/2)$ and
$z_2\in B_{n(2)}(y_2,\eps/2)$. This finishes the proof.
\end{proof}

\section*{Acknowledgements}
This work was supported by the Polish Ministry of Science and Higher Education from sources for science in the years 2010-2011, grant no. IP2010~029570.

\bibliographystyle{amsplain}
\bibliography{hko_bib_20110316}
\end{document}